\newcommand{\set}[1]{\ensuremath{ \{ #1 \} }}
\newcommand{\R}{\mathbb{R}}
\theoremstyle{nonumberplain}
\numberwithin{equation}{section}
\def\set{\ensuremath{\mathcal{S}}}
\def\bx{\ensuremath{\mathbf{x}}}
\def\bu{\ensuremath{\mathbf{u}}}
\def\bv{\ensuremath{\mathbf{v}}}
\def\bs{\ensuremath{\mathbf{s}}}
\def\FH{Fr\'echet--Hoeffding\xspace}
\def\I{\ensuremath{\mathbb{I}}}
\def\ud{\mathrm{d}}
\def\b1{\mathds{1}}
\DeclareMathAccent{\what}{\mathord}{largesymbols}{"62}
\DeclareFontFamily{U}{mathx}{\hyphenchar\font45}
\DeclareFontShape{U}{mathx}{m}{n}{
      <5> <6> <7> <8> <9> <10>
      <10.95> <12> <14.4> <17.28> <20.74> <24.88>
      mathx10
      }{}
\DeclareSymbolFont{mathx}{U}{mathx}{m}{n}
\DeclareMathAccent{\widecheck}{0}{mathx}{"71}
\begin{document}

\title{Improved Fr\'echet--Hoeffding bounds on $d$-copulas and applications in model-free finance}

\author[a,1,t]{Thibaut Lux}
\author[a,2,t]{Antonis Papapantoleon}

\address[a]{\emph{ Institute of Mathematics, TU Berlin, Stra{\ss}e des 17. Juni 136, 10623 Berlin, Germany}}

\eMail[1]{thibaut.lux@mail.tu-berlin.de}
\eMail[2]{papapan@math.tu-berlin.de}

\myThanks[t]{\emph{ We thank Peter Bank, Carole Bernard, Alfred M\"uller, Ludger R\"uschendorf and Peter Tankov for useful discussions during the work on these topics. 
Moreover, we are grateful to Paulo Yanez and three anonymmous referees for the detailed comments that have significantly improved this manuscript. 
TL gratefully acknowledges the financial support from the DFG Research Training Group 1845 ``Stochastic Analysis with Applications in Biology, Finance and Physics''. 
In addition, both authors gratefully acknowledge the financial support from the PROCOPE project ``Financial markets in transition: mathematical models and challenges''.\\}}

\abstract{We derive upper and lower bounds on the expectation of $f(\mathbf{S})$ under dependence uncertainty, i.e. when the marginal distributions of the random vector $\mathbf{S}=(S_1,\dots,S_d)$ are known but their dependence structure is partially unknown. 
We solve the problem by providing improved \FH bounds on the copula of $\mathbf{S}$ that account for additional information. 
In particular, we derive bounds when the values of the copula are given on a compact subset of $[0,1]^d$, the value of a functional of the copula is prescribed or different types of information are available on the lower dimensional marginals of the copula. 
We then show that, in contrast to the two-dimensional case, the bounds are quasi-copulas but fail to be copulas if $d>2$. 
Thus, in order to translate the improved \FH bounds into bounds on the expectation of $f(\mathbf{S})$, we develop an alternative representation of multivariate integrals with respect to copulas that admits also quasi-copulas as integrators. 
By means of this representation, we provide an integral characterization of orthant orders on the set of quasi-copulas which relates the improved \FH bounds to bounds on the expectation of $f(\mathbf{S})$. 
Finally, we apply these results to compute model-free bounds on the prices of multi-asset options that take partial information on the dependence structure into account, such as correlations or market prices of other traded derivatives. 
The numerical results show that the additional information leads to a significant improvement of the option price bounds compared to the situation where only the marginal distributions are known.}

\keyWords{Improved \FH bounds, quasi-copulas, stochastic dominance for quasi-copulas, model-free option pricing.}

\ArXiV{1602.08894}
\keyAMSClassification{62H05, 60E15, 91G20.}

\date{}
\frenchspacing \maketitle

\section{Introduction}

In recent years model uncertainty and uncertainty quantification have become ever more important topics in many areas of applied mathematics. 
Where traditionally the focus was on computing quantities of interest given a certain model, one today faces more frequently the challenge of estimating quantities in the absence of a fully specified model. 
In a probabilistic setting, one is interested in the expectation of $f(\mathbf{S})$, where $f\colon\mathbb{R}^d\to\mathbb{R}$ is a function and $\mathbf{S}=(S_1,\dots,S_d)$ is a random vector whose probability distribution is partially unknown. 
In this paper we consider the problem of finding upper and lower bounds on the expectation of $f(\mathbf{S})$ when the marginal distributions $F_i$ of $S_i$ are known while the dependence structure of $\mathbf{S}$ is partially unknown. 
This setting is referred to in the literature as \textit{dependence uncertainty}. 
The problem has an extensive history and several approaches to its solution have been developed. 
In the two-dimensional case, \citet{makarov} solved the problem for the quantile function of $f(x_1,x_2) = x_1+x_2$, while  \citet{rueschendorf2} considered more general functions $f$ fulfilling some monotonicity requirements. 
Both focused on the situation of complete dependence uncertainty, i.e. when no information on the dependence structure of $\mathbf{S}$ is available. 
Since then, solutions to this problem have evolved predominantly along the lines of optimal transportation, optimization theory and \FH bounds.

In this paper we take the latter approach to solving the problem in $d\ge2$ dimensions and for functions $f$ satisfying certain monotonicity properties. 
Assuming that the marginal distributions $F_i$ of $S_i$ are known and applying Sklar's Theorem, the problem can be reformulated as a minimization or maximization problem over the class of copulas that are compatible with the available information on $\mathbf{S}$. 
Using results from the theory of multivariate stochastic orders, bounds on the set of copulas can then be translated into bounds on the expectation of $f(\mathbf{S})$.

In the case of complete dependence uncertainty, that is, when only the marginals are known and no information on the joint behavior of the constituents of $\mathbf{S}$ is available, the bounds on the set of copulas are given by the well-known \FH bounds. 
They can however be improved in the presence of additional information on the copula. In case $d=2$, \citet{nelsen} derived improved \FH bounds if the copula of $\mathbf{S}$ is known at a single point. 
Similar improvements of the bivariate \FH bounds were provided by \citet{Rachev_Rueschendorf_1994} when the copula is known on an arbitrary set and by \citet{nelsen2} for the case in which a measure of association such as Kendall's $\tau$ or Spearman's $\rho$ is prescribed. 
\citet{tankov} recently generalized these results by improving the bivariate \FH bounds if the copula is known on a compact set or the value of a monotonic functional of the copula is prescribed. 
Since the bounds are in general not copulas but quasi-copulas, Tankov also provided sufficient conditions under which the improved bounds are copulas. 

In Sections 3 and 4 we establish improved \FH bounds on the set of $d$-dimensional copulas whose values are known on an arbitrary compact subset of 
$[0,1]^d$. 
Moreover, we provide analogous improvements when the value of a functional of the copula is prescribed or different types of information are available on the lower-dimensional margins of the copula. 
We further show that the improved bounds are quasi-copulas but fail to be copulas under fairly general assumptions. 
This constitutes a significant difference between the high-dimensional and the bivariate case, in which \citet{tankov} and \citet{bernard} showed that the improved bounds are copulas under quite relaxed conditions. 

Since our improved \FH bounds are merely quasi-copulas, results from stochastic order theory which translate bounds on the copula of $\mathbf{S}$ into bounds on the expectation of $f(\mathbf{S})$ do not apply. 
Even worse, the integrals with respect to quasi-copulas are not well-defined. 
Therefore, we derive in Section 5 an alternative representation of multivariate integrals with respect to copulas which admits also quasi-copulas as integrators, and establish integrability and continuity properties of this representation. 
Moreover, we provide an integral characterization of the lower and upper orthant order on the set of quasi-copulas, analogous to previous results on integral stochastic orders for copulas. 
These orders generalize the concept of first order stochastic dominance for multvariate distributions. 
Our results show that the representation of multivariate integrals is monotonic with respect to the upper or lower orthant order on the set of quasi-copulas for a large class of integrands. 
This enables us to compute bounds on the expectation of $f(\mathbf{S})$ that account for the available information on the marginal distributions and the copula of $\mathbf{S}$.

Finally, we apply our results in order to compute bounds on the prices of European, path-independent options in the presence of dependence uncertainty. 
These bounds are typically called \textit{model-free} or \textit{model-independent} in the literature, since no probabilistic model is assumed for the marginals or the dependence structure.
More specifically, we assume that $\mathbf{S}$ models the terminal value of financial assets whose risk-free marginal distributions can be inferred from market prices of traded vanilla options on its constituents. 
Moreover, we suppose that additional information on the dependence structure of $\mathbf{S}$ can be obtained from prices of traded derivatives on $\mathbf{S}$ or a subset of its components. 
This could be, for instance, information about the pairwise correlations of the components or prices of traded multi-asset options. 
Then, the improved \FH bounds and the integral characterization of orthant orders allow us to efficiently compute bounds on the set of arbitrage-free prices of $f(\mathbf{S})$ that are compatible with the available information on the distribution of $\mathbf{S}$. 
The payoff function $f$ should satisfy certain monotonicity conditions that hold for a plethora of options, such as digitals and options on the mininum or maximum of several assets, however basket options are excluded. 
In addition, the obtained bounds are not sharp in general. 
However, the numerical results show that the improved \FH bounds that take additional dependence information into account lead to a significant improvement of the option price bounds compared to the ones obtained from the `standard' \FH bounds.

\section{Notation and preliminary results}

In this section we introduce the notation and some basic results that will be used throughout this work. 
Let $d\geq2$ be an integer. 
In the sequel, $\mathbb{I}$ denotes the unit interval $[0,1]$, $\mathbf1$ denotes the vector with all entries equal to one, i.e. $\mathbf 1=(1,\dots,1)$, while boldface letters, e.g. $\mathbf{u}$, $\mathbf{v}$ or $\mathbf{x}$, denote vectors in $\mathbb{I}^d$, $\mathbb{R}^d$ or $\overline{\mathbb{R}}^d = [-\infty,\infty]^d$. 
Moreover, $\subseteq$ denotes the inclusion between sets and $\subset$ the proper inclusion, while we refer to functions as increasing when they are not decreasing. 

The finite difference operator $\Delta$ will be used frequently. 
It is defined for a function $f\colon\mathbb{R}^d\to\mathbb{R}$ and $a,b\in\mathbb{R}$ with $a\leq b$ via
$$\Delta_{a,b}^i\ f(x_1,\dots,x_d)  
  = f(x_1,\dots,x_{i-1},b,x_{i+1},\dots,x_d)
  - f(x_1,\dots,x_{i-1},a,x_{i+1},\dots,x_d).$$

\begin{definition}
A function $f\colon\mathbb{R}^d\to\mathbb{R}$ is called $d$-\textit{increasing} if for all rectangular subsets $H = (a_1,b_1]\times\cdots\times(a_d,b_d]\subset\mathbb{R}^d$ it holds that
\begin{align}\label{volume}
V_f(H) := \Delta^d_{a_d,b_d}\circ\cdots\circ\Delta^1_{a_1,b_1}\ f \geq 0.
\end{align}
Analogously, a function $f$ is called $d$-\textit{decreasing} if $-f$ is $d$-increasing.
Moreover, $V_f(H)$ is called the $f$-\textit{volume} of $H$.
\end{definition}

\begin{definition}
A function $Q\colon\mathbb{I}^d\to\mathbb{I}$ is a $d$-\textit{quasi-copula} if the following properties hold:
\begin{enumerate}[label={$(\mathbf{QC1})$},leftmargin=!,labelwidth=\widthof{
\bfseries XXXX}]
\item $Q$ satisfies, for all $i\in\{1,\dots,d\}$, the boundary conditions 
      \label{cond:QC1}
      $$Q(u_1,\dots,u_i = 0,\dots,u_d)=0 
      \quad\text{ and }\quad
      Q(1,\dots,1,u_i,1,\dots,1) = u_i.$$ 
\end{enumerate}
\begin{enumerate}[label={$(\mathbf{QC2})$},leftmargin=!,labelwidth=\widthof{
\bfseries XXXX}]
\item $Q$ is increasing in each argument. \label{cond:QC2}
\end{enumerate}
\begin{enumerate}[label={$(\mathbf{QC3})$},leftmargin=!,labelwidth=\widthof{
\bfseries XXXX}]
\item $Q$ is Lipschitz continuous, i.e. for all 
      $\mathbf{u},\mathbf{v}\in\mathbb{I}^d$\label{cond:QC3}
      $$|Q(u_1,\dots,u_d)-Q(v_1,\dots,v_d)|\leq\sum_{i=1}^d |u_i-v_i|.$$ 
\end{enumerate}
Moreover, $Q$ is a $d$-\textit{copula} if
\begin{enumerate}[label={$(\mathbf{QC4})$},leftmargin=!,labelwidth=\widthof{
\bfseries XXXX}]
\item $Q$ is $d$-increasing. \label{cond:QC4}
\end{enumerate}
\end{definition}

We denote the set of all $d$-quasi-copulas by $\mathcal{Q}^d$ and the set of all $d$-copulas by $\mathcal{C}^d$. 
Obviously $\mathcal{C}^d\subset\mathcal{Q}^d$. 
In the sequel, we will simply refer to a $d$-(quasi-)copula as (quasi-)copula if the dimension is clear from the context. 
Furthermore, we refer to elements in $\mathcal{Q}^d\setminus\mathcal{C}^d$ as proper quasi-copulas.

Let $C$ be a $d$-copula and consider $d$ univariate probability distribution functions $F_1,\dots,F_d$. 
Then $F(x_1,\dots,x_d):=C(F_1(x_1),\dots,F_d(x_d))$, for all $\mathbf{x}\in\mathbb{R}^d$, defines a $d$-dimensional distribution function with univariate margins $F_1,\dots,F_d$. 
The converse also holds by Sklar's Theorem, cf. \citet{sklar}. 
That is, for each $d$-dimensional distribution function $F$ with univariate marginals $F_1,\dots,F_d$, there exists a copula $C$ such that $F(x_1,\dots,x_d) = C(F_1(x_1),\dots,F_d(x_d))$ for all $\mathbf{x}\in\mathbb{R}^d$. 
In this case, the copula $C$ is unique if the marginals are continuous. 
A simple and elegant proof of Sklar's Theorem based on the distributional transform can be found in \citet{rueschendorf}. 
Sklar's Theorem establishes a fundamental link between copulas and multivariate distribution functions. 
Thus, given a random vector we will refer to its copula, i.e. the copula corresponding to the distribution function of this random vector.

Let $Q$ be a copula. 
We define its \textit{survival function} as follows:
$$\widehat{Q}(u_1,\dots,u_d) 
  := V_Q((u_1,1]\times\cdots\times(u_d,1]),\quad \mathbf{u} \in \mathbb{I}^d.$$
The survival function is illustrated for $d=3$ below:
\begin{align*}
\widehat{Q}(u_1,u_2,u_3) 
 &= 1 - Q(u_1,1,1,) - Q(1,u_2,1) - Q(1,1,u_3)\\
 &\quad + Q(u_1,u_2,1) + Q(u_1,1,u_3) + Q(1,u_2,u_3) - Q(u_1,u_2,u_3).
\end{align*}
A well-known result states that if $C$ is a copula then the function $\mathbf u \mapsto \widehat{C}(\mathbf1-\mathbf u)$, $\mathbf{u} \in \mathbb{I}^d$, is again a copula, namely the \textit{survival copula} of $C$; see e.g. \citet{georges}.
In contrast, if $Q$ is a quasi-copula then $\mathbf u \mapsto \widehat{Q}(\mathbf1-\mathbf u)$ is not a quasi-copula in general; see Example \ref{ex:counter} for a counterexample. 
We will refer to functions $\widehat{Q}\colon\mathbb{I}^d\to\mathbb{I}$  as \textit{quasi-survival functions} when $\bu\mapsto\widehat{Q}(\mathbf 1-\bu)$ is a quasi-copula. 
Let us point out that for a distribution function $F$ of a random vector $\mathbf{S}=(S_1,\dots,S_d)$ with marginals $F_1,\dots,F_d$ and a corresponding copula $C$ such that $F(x_1,\dots,x_d) = C(F_1(x_1),\dots,F_d(x_d))$ it holds that 
\begin{align}
\label{survivalCopulaProbability}
\mathbb{P}(S_1>x_1,\dots,S_d>x_d) = \widehat{C}(F_1(x_1),\dots,F_d(x_d)).
\end{align}

\begin{definition}
Let $Q_1,Q_2$ be $d$-quasi-copulas. 
$Q_2$ is larger than $Q_1$ in the \textit{lower orthant order}, denoted by $Q_1\preceq_{LO} Q_2$, if $Q_1(\mathbf{u})\leq Q_2(\mathbf{u})$ for all $\mathbf{u}\in\mathbb{I}^d$. 
Analogously, $Q_2$ is larger than $Q_1$ in the \textit{upper orthant order}, denoted by $Q_1\preceq_{UO} Q_2$ if $\widehat{Q}_1(\mathbf{u})\leq \widehat{Q}_2(\mathbf{u})$ for all $\mathbf{u}\in\mathbb{I}^d$. 
Moreover, the \textit{concordance order} is defined via $\preceq_{UO}$ and $\preceq_{LO}$, namely $Q_2$ is larger than $Q_1$ in concordance order if $Q_1\preceq_{UO} Q_2$ and $Q_1\preceq_{LO} Q_2$. 
\end{definition}

\begin{remark}
The lower and the upper orthant orders coincide when $d=2$. 
Hence, they also coincide with the concordance order.
\end{remark}

The well-known Fr\'{e}chet--Hoeffding theorem establishes the minimal and maximal bounds on the set of copulas or quasi-copulas in the lower orthant 
order. 
In particular, for each $Q\in\mathcal{C}^d$ or $Q\in\mathcal{Q}^d$, it holds that
$$W_d(\mathbf{u})
  := \max\Big\{0,\sum_{i=1}^d u_i - d + 1\Big\} 
  \leq   Q(\mathbf{u}) \leq \min\{u_1,\dots,u_d\} 
  =: M_d(\mathbf{u}),$$
for all $\mathbf{u}\in\mathbb{I}^d$, i.e. $W_d\preceq_{LO} Q\preceq_{LO} M_d$, where $W_d$ and $M_d$ are the lower and upper Fr\'{e}chet--Hoeffding bounds 
respectively. 
The upper bound is a copula for all $d\geq 2$, whereas the lower bound is a copula only if $d=2$ and a proper quasi-copula otherwise. 
A proof of this theorem can be found in \citet{genest}. 

A bound over a set of copulas, resp. quasi-copulas, is called \textit{sharp} if it belongs again to this set. 
Thus, the upper \FH bound is sharp for the set of copulas and quasi-copulas. 
Although the lower bound is not sharp for the set of copulas unless  $d=2$, it is (pointwise) \textit{best-possible} for all $d\in\mathbb{N}$ in the following 
sense:
$$W_d(\mathbf{u}) = \inf_{C\in\mathcal{C}^d} C(\mathbf{u}), 
  \quad \mathbf{u}\in\mathbb{I}^d;$$
cf. Theorem 6 in \citet{rueschendorf3}.

Since the properties of the Fr\'{e}chet--Hoeffding bounds carry over to the set of survival copulas in a straightforward way, one obtains similarly for any 
$C\in\mathcal{C}^d$ bounds with respect to the upper orthant order as follows:
$$W_d(1-u_1,\dots,1-u_d) \leq \widehat{C}(u_1,\dots,u_d) \leq 
  M_d(1-u_1,\dots,1-u_d), \quad\text{for all }\mathbf{u}\in\mathbb{I}^d.$$

\begin{example}\label{ex:counter}
Consider the lower \FH bound in dimension 3, i.e. $W_3$.
Then, $W_3$ is a quasi-copula, however $\bu \mapsto W_3(\mathbf1-\bu)$ is \textit{not} a quasi-copula again.
To this end, notice that quasi-copulas take values in $[0,1]$, while
$$
\widehat{W_3}\Big(\frac{1}{2},\frac{1}{2},\frac{1}{2}\Big) 
	= -\frac{1}{2}.
$$
\end{example}

\section{Improved Fr\'{e}chet--Hoeffding bounds under partial information on the dependence structure}
\label{sec:iFH}

In this section we develop bounds on $d$-copulas that improve the classical Fr\'{e}chet--Hoeffding bounds by assuming that partial information on the dependence structure is available. 
This information can be the knowledge either of the copula on a subset of $\mathbb{I}^d$, or of a measure of association, or of some lower-dimensional marginals of the copula. 
Analogous improvements can be obtained for the set of survival copulas in the presence of additional information and the respective results are presented in Appendix \ref{AppA}. 
The first result provides improved \FH bounds assuming that the copula is known on a subset of $\mathbb{I}^d$. 
The corresponding bounds for $d=2$ have been provided by \citet{Rachev_Rueschendorf_1994}, \citet{nelsen} and \citet{tankov}.

\begin{theorem}\label{PrescriptionOnSubset}
Let $\set\subset\mathbb{I}^d$ be a compact set and $Q^*$ be a $d$-quasi-copula. 
Consider the set 
\begin{align*}
\mathcal{Q}^{\set,Q^*} := \big\{ Q\in\mathcal{Q}^d\colon Q(\mathbf{x}) = 
  Q^*(\mathbf{x}) \text{ for all } \mathbf{x}\in\set\big\}.
\end{align*}
Then, for all $Q\in\mathcal{Q}^{\set,Q^*}$, it holds that 
\begin{align}\label{inequality}
\begin{split}
&\underline{Q}^{\set,Q^*}(\mathbf{u}) \leq 
  Q(\mathbf{u}) \leq \overline{Q}^{\set,Q^*}(\mathbf{u}) 
  \quad\text{for all }\mathbf{u}\in\mathbb{I}^d,\\
&\underline{Q}^{\set,Q^*}(\mathbf{u}) = 
  Q(\mathbf{u}) = \overline{Q}^{\set,Q^*}(\mathbf{u})
  \quad\text{for all }\mathbf{u}\in \set,
\end{split}
\end{align}
where the bounds $\underline{Q}^{\set,Q^*}$ and $\overline{Q}^{\set,Q^*}$ are provided by 
\begin{align}
\label{bounds}
\begin{split}
&\underline{Q}^{\set,Q^*}(\mathbf{u}) 
  = \max\Big(0, \sum_{i=1}^d u_i-d+1,\max_{\mathbf{x}\in \set} 
    \Big\{Q^*(\mathbf{x})-\sum_{i=1}^d (x_i-u_i)^+\Big\}\Big),\\
&\overline{Q}^{\set,Q^*}(\mathbf{u}) 
  = \min\Big(u_1,\dots,u_d,\min_{\mathbf{x}\in \set} 
    \Big\{Q^*(\mathbf{x})+\sum_{i=1}^d (u_i-x_i)^+\Big\}\Big).
\end{split}
\end{align}
Furthermore, the bounds $\underline{Q}^{\set,Q^*},\overline{Q}^{\set,Q^*}$ are $d$-quasi-copulas, hence they are sharp.
\end{theorem}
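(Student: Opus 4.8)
The plan is to proceed in three stages: first verify that the two candidate functions $\underline{Q}^{\set,Q^*}$ and $\overline{Q}^{\set,Q^*}$ are genuine $d$-quasi-copulas, then show they are valid bounds (i.e., that every $Q\in\mathcal{Q}^{\set,Q^*}$ is sandwiched between them), and finally observe that they agree with $Q^*$ on $\set$, which together with the quasi-copula property yields sharpness by definition (a bound lying in the set is sharp). The key structural observation driving everything is the Lipschitz property \ref{cond:QC3}: for any $Q\in\mathcal{Q}^{\set,Q^*}$ and any $\mathbf{x}\in\set$, comparing $Q(\mathbf{u})$ with $Q(\mathbf{x})=Q^*(\mathbf{x})$ along a monotone path that changes one coordinate at a time gives $|Q(\mathbf{u})-Q^*(\mathbf{x})|\le\sum_{i=1}^d|u_i-x_i|$, and using monotonicity \ref{cond:QC2} one can sharpen this to the one-sided estimates $Q(\mathbf{u})\le Q^*(\mathbf{x})+\sum_i(u_i-x_i)^+$ and $Q(\mathbf{u})\ge Q^*(\mathbf{x})-\sum_i(x_i-u_i)^+$. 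Taking the infimum, resp.\ supremum, over $\mathbf{x}\in\set$ and combining with the classical \FH bounds $W_d\le Q\le M_d$ immediately gives the inequality \eqref{inequality} for the upper and lower candidate; compactness of $\set$ guarantees the min and max are attained so the formulas in \eqref{bounds} are well-posed.

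For the quasi-copula property of the bounds, I would check \ref{cond:QC1}--\ref{cond:QC3} for $\overline{Q}^{\set,Q^*}$ (the argument for $\underline{Q}^{\set,Q^*}$ is symmetric, or follows by a reflection). Properties \ref{cond:QC2} and \ref{cond:QC3} are inherited from the fact that $\overline{Q}^{\set,Q^*}$ is a pointwise minimum of functions each of which is increasing and $1$-Lipschitz in each coordinate: the coordinate projections $\mathbf{u}\mapsto u_j$ are clearly such, and each $\mathbf{u}\mapsto Q^*(\mathbf{x})+\sum_i(u_i-x_i)^+$ is a sum of $1$-Lipschitz nondecreasing functions of a single coordinate, hence $1$-Lipschitz in the sum-norm and increasing; a pointwise infimum of a family that is uniformly $1$-Lipschitz and increasing retains both properties. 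The boundary conditions \ref{cond:QC1} require a short computation: when some $u_i=0$, the term $u_i$ in the outer min forces the value to be $\le 0$, and nonnegativity must be argued from the lower bound (which is why the statement bundles both bounds together — one should really verify $0\le\underline{Q}^{\set,Q^*}\le\overline{Q}^{\set,Q^*}\le 1$ jointly); when all coordinates but the $i$-th equal $1$, one checks that $\overline{Q}^{\set,Q^*}(1,\dots,u_i,\dots,1)=u_i$ using that $Q^*$ itself satisfies \ref{cond:QC1} and \ref{cond:QC3}, so $Q^*(\mathbf{x})+\sum_j(u_j-x_j)^+\ge Q^*(\mathbf{x})+(u_i-x_i)^+ + \sum_{j\ne i}(1-x_j)\ge u_i$ after using $Q^*(\mathbf{x})\ge x_i-\sum_{j\ne i}(1-x_j)$ (Lipschitz from the point $(1,\dots,x_i,\dots,1)$), while $u_i$ appears in the outer min so the minimum is exactly $u_i$.

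The last point — that $\underline{Q}^{\set,Q^*}(\mathbf{x})=Q^*(\mathbf{x})=\overline{Q}^{\set,Q^*}(\mathbf{x})$ for $\mathbf{x}\in\set$ — follows by taking the competing index $\mathbf{x}'=\mathbf{x}$ in the min/max (which contributes exactly $Q^*(\mathbf{x})$), together with the already-established inequality $\underline{Q}^{\set,Q^*}\le\overline{Q}^{\set,Q^*}$ and the fact that $Q^*$ itself, being a quasi-copula satisfying the constraint, must lie between the two bounds; this pins both bounds to $Q^*(\mathbf{x})$. Since each bound is then a quasi-copula belonging to $\mathcal{Q}^{\set,Q^*}$, it is sharp by definition. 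The main obstacle I anticipate is not any single step but the bookkeeping in the boundary-condition verification \ref{cond:QC1} for the bounds: one has to carefully exploit that $Q^*$ is itself a quasi-copula (so its own Lipschitz and boundary properties feed into the estimates) and handle the two bounds in tandem to get the containment in $[0,1]$, rather than treating $\overline{Q}^{\set,Q^*}$ in isolation.
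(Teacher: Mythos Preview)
Your proposal is correct and follows essentially the same route as the paper: derive the one-sided estimates $Q^*(\mathbf{x})-\sum_i(x_i-u_i)^+\le Q(\mathbf{u})\le Q^*(\mathbf{x})+\sum_i(u_i-x_i)^+$ from Lipschitz plus monotonicity (the paper does this via a telescoping sum, which is precisely your ``monotone path changing one coordinate at a time''), take the extremum over $\mathbf{x}\in\set$ and combine with the \FH bounds, and then verify \ref{cond:QC1}--\ref{cond:QC3} for the resulting functions using that pointwise min/max preserves Lipschitz and monotonicity, with the boundary checks driven by the quasi-copula properties of $Q^*$ itself. The only cosmetic differences are that the paper treats the singleton case $\set=\{\mathbf{x}\}$ first before passing to general compact $\set$, and that it verifies the quasi-copula axioms for each bound directly rather than bundling nonnegativity of $\overline{Q}^{\set,Q^*}$ via $\underline{Q}^{\set,Q^*}\le\overline{Q}^{\set,Q^*}$ as you suggest; neither difference is substantive.
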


\begin{proof}
We start by considering a prescription at a single point, i.e. we let $\set=\{\mathbf{x}\}$ for $\mathbf{x}\in\mathbb{I}^d$, and show that $\underline{Q}^{\{\mathbf x\},Q^*}$ and $\overline{Q}^{\{\mathbf x\},Q^*}$, provided by \eqref{bounds} for $\set=\{\mathbf x\}$, satisfy \eqref{inequality} for all $Q\in\mathcal Q^{\{\bx\},Q^*}$.
In this case, analogous results were provided by \citet{Rodriguez_Ubeda_2004}. 
We present below a simpler, direct proof. 
Let $Q\in\mathcal{Q}^{\{\mathbf{x}\},Q^*}$ be arbitrary and $(u_1,\dots,u_d)$, $(u_1,\dots,u_{i-1},x_i,u_{i+1},\dots,u_d)\in\mathbb{I}^d$, then it follows 
from the Lipschitz property of $Q$ and the fact that $Q$ is increasing in each 
coordinate that
$$-(u_i-x_i)^+\leq 
Q(u_1,\dots,u_{i-1},x_i,u_{i+1},\dots,u_d)-Q(u_1,\dots,u_d)\leq (x_i-u_i)^+.$$
Using the telescoping sum 
\begin{align*}
Q(x_1,\dots,x_d)&-Q(u_1,\dots,u_d) = 
Q(x_1,\dots,x_d)-Q(u_1,x_2,\dots,x_d)+Q(u_1,x_2,\dots,x_d)\\
&-Q(u_1,u_2,x_3,\dots,x_d)+\cdots+Q(u_1,\dots,u_{d-1},x_d)-Q(u_1,\dots,u_d)
\end{align*}
we arrive at 
$$-\sum_{i=1}^d(u_i-x_i)^+ \leq Q(x_1,\dots,x_d)-Q(u_1,\dots,u_d) 
\leq \sum_{i=1}^d(x_i-u_i)^+$$
which is equivalent to
$$Q(x_1,\dots,x_d) - \sum_{i=1}^d(x_i-u_i)^+
  \leq Q(u_1,\dots,u_d)  
  \leq Q(x_1,\dots,x_d) +\sum_{i=1}^d(u_i-x_i)^+.$$
The prescription yields further that $Q(x_1,\dots,x_d)=Q^*(x_1,\dots,x_d)$, from which follows that
$$Q^*(x_1,\dots,x_d) - \sum_{i=1}^d(x_i-u_i)^+ 
	\leq Q(u_1,\dots,u_d)  
	\leq Q^*(x_1,\dots,x_d) + \sum_{i=1}^d(u_i-x_i)^+,$$
while incorporating the Fr\'{e}chet--Hoeffding bounds yields
\begin{multline}
\max\Big\{0,\sum_{i=1}^d u_i-d+1, Q^*(x_1,\dots,x_d) 
	- \sum_{i=1}^d(x_i-u_i)^+\Big\} 
	\leq Q(u_1,\dots,u_d) \\
\leq \min\Big\{u_1,\dots,u_d,Q^*(x_1,\dots,x_d)+\sum_{i=1}^d(u_i-x_i)^+\Big\},
\end{multline}
showing that the inequalities in \eqref{inequality} are valid for $\set=\{\mathbf{x}\}$. 
Moreover, since $W_d(\mathbf{x}) \leq Q^*(\mathbf{x}) \leq M_d(\mathbf{x})$ it holds that
\begin{align*}
&\underline{Q}^{\{\bx\},Q^*}(\mathbf{x}) 
	= \max\Big\{0,\sum_{i=1}^d x_i-d+1,Q^*(x_1,\dots,x_d)\Big\} 
	= Q^*(\mathbf{x}),\\
&\overline{Q}^{\{\bx\},Q^*}(\mathbf{x}) 
	= \min\Big\{x_1,\dots,x_d,Q^*(x_1,\dots,x_d)\Big\} 
	= Q^*(\mathbf{x}),
\end{align*}
showing that the equalities in \eqref{inequality} are valid for $\set= \{\mathbf{x}\}$.

Next, let $\set$ be a compact set which is not a singleton and consider a $Q\in\mathcal{Q}^{\set,Q^*}$. 
We know from the arguments above that 
$Q(\mathbf{u})\geq\underline{Q}^{\{\mathbf{x}\},Q^*}(\mathbf{u})$ for all 
$\mathbf{x}\in \set$, therefore 
\begin{align}\label{maxinsideout}
Q(\mathbf{u}) 
  \geq \max_{\mathbf{x}\in \set} 
  \Big\{\underline{Q}^{\{\mathbf{x}\},Q^*}(\mathbf{u})\Big\} 
  = \underline{Q}^{\set,Q^*}(\mathbf{u}).
\end{align}
Analogously we get for the upper bound that 
\begin{align}\label{mininsideout}
Q(\mathbf{u})
  \leq \min_{\mathbf{x}\in \set} 
  \Big\{\overline{Q}^{\{\mathbf{x}\},Q^*}(\mathbf{u})\Big\}
  = \overline{Q}^{\set,Q^*}(\mathbf{u}),
\end{align}
hence the inequalities in \eqref{inequality} are valid. 
Moreover, if $\mathbf{u}\in \set$ then $Q(\mathbf{u}) = Q^*(\mathbf{u})$ for all $Q\in\mathcal{Q}^{\set,Q^*}$ and using the Lipschitz property of quasi-copulas we obtain
$$\max_{\mathbf{x}\in \set} \Big\{Q^*(\mathbf{x})-\sum_{i=1}^d(x_i-u_i)^+\Big\}
  = Q^*(\mathbf{u}) \,\,\text{ and }\,\,
  \min_{\mathbf{x}\in \set} \Big\{Q^*(\mathbf{x})+\sum_{i=1}^d(u_i-x_i)^+\Big\}
  = Q^*(\mathbf{u}).
$$
Hence, using again that $Q^*$ satisfies the Fr\'echet--Hoeffding bounds we arrive at 
$$\underline{Q}^{\set,Q^*}(\mathbf{u}) 
 	= Q(\mathbf{u})
	= \overline{Q}^{\set,Q^*}(\mathbf{u}).
$$	

Finally, it remains to show that both bounds are $d$-quasi-copulas.
\begin{itemize}
\item[$\bullet$] 
	In order to show that \ref{cond:QC1} holds, first consider the case $\set=\{x\}$. Let $(u_1,\dots,u_d)\in\mathbb{I}^d$ with $u_i=0$ for one $i\in\{1,\dots,d\}$. 
	Then $\overline{Q}^{\set,Q^*}(\mathbf{u})$ is obviously zero, and $\underline{Q}^{\set,Q^*}(\mathbf{u})=\max\big\{0,Q^*(\mathbf{x})-x_i-\sum_{j\neq i}(x_j-u_j)^+\big\}=0$ because $Q^*(\mathbf{x})\leq \min\{x_1,\dots,x_d\}$, i.e. $Q^*(\mathbf{x})-x_i-\sum_{j\neq i}(x_j-u_j)^+\leq 0$ for all $\mathbf{x}\in \set$. 
	Moreover for 
  $(u_1,\dots,u_d)\in\mathbb{I}^d$ with $u_i=1$ for all 
  $i\in\{1,\dots,d\}\setminus\{j\}$, the upper bound equals 
  $\overline{Q}^{\set,Q^*}(\mathbf{u})=\min\big\{u_j,Q^*(\mathbf{x})+\sum_{i=1}
  ^d(u_i-x_i)^+\big\}$ and since 
\begin{align*}
Q^*(\mathbf{x})+\sum_{i=1}^d (u_i-x_i)^+ 
&= Q^*(\mathbf{x})+\sum_{i\in\{1,\dots,d\}\setminus \{j\}} (1-x_i)+(u_j-x_j)^+\\
&= Q^*(\mathbf{x})+d-1-\sum_{i\in\{1,\dots,d\}\setminus \{j\}} x_i+(u_j-x_j)^+\\
&\geq W_d(\mathbf{x})+d-1-\sum_{i\in\{1,\dots,d\}\setminus \{j\}} 
  x_i+(u_j-x_j)^+\\
&\ge x_j+(u_j-x_j)^+\geq u_j,
\end{align*}	
  it follows that $Q^*(\mathbf{u})=u_j$, hence $\overline{Q}^{\set,Q^*}(\mathbf{u})=u_j$. 
  Similarly, the lower bound amounts to $\underline{Q}^{\set,Q^*}(\mathbf{u})=\max\big\{0,u_j,Q^*(\mathbf{x})-(x_j-u_j)^+\big\}$ which equals $u_j$ because $Q^*(\mathbf{x})-(x_j-u_j)^+\leq M_d(\mathbf{x})-(x_j-u_j)^+\leq u_j$. 
  The boundary conditions hold analogously for $\set$ containing more than one element due to the continuity of the maximum and minimum functions and relationships \eqref{maxinsideout} and \eqref{mininsideout}. 
\item[$\bullet$] 
	Both bounds are obviously increasing in each variable, thus \ref{cond:QC2} holds.
\item[$\bullet$] 
	Finally, taking the pointwise minimum and maximum of Lipschitz functions preserves the Lipschitz property, thus both bounds satisfy \ref{cond:QC3}.
\end{itemize}
\end{proof}

\begin{remark}
The bounds in Theorem \ref{PrescriptionOnSubset} hold analogously for prescriptions on copulas, i.e. for all $C$ in $\mathcal C^{\set,Q^*}=\{C\ \in\mathcal{C}^d\colon C(\mathbf{x}) = Q^*(\mathbf{x}) \text{ for all } \mathbf{x}\in \set\}$ where $Q^*$ and $\set$ are as above, it holds that $\underline{Q}^{\set,Q^*}(\mathbf{u}) \leq C(\mathbf{u}) \leq \overline{Q}^{\set,Q^*}(\mathbf{u})$ for all $\bu\in\mathbb{I}^d$.
Let us point out that the set $\mathcal{C}^{\set,Q^*}$ may possibly be empty, depending on the prescription. 
We will not investigate the requirements on the prescription for $\mathcal{C}^{\set,Q^*}$ to be non-empty. 
A detailed discussion of this issue in the two-dimensional case can be found in \citet{mardani-fard}. 
\end{remark}

Next, we derive improved bounds on $d$-quasi-copulas when values of real-valued functionals of the quasi-copulas are prescribed. 
Examples of such functionals are the multivariate generalizations of Spearman's rho and Kendall's tau given in \citet{taylor}. 
Moreover, in the context of multi-asset option pricing, examples of such functionals are prices of spread or digital options. 
Analogous results for $d=2$ are provided by \citet{nelsen} and \citet{tankov}.

\begin{theorem}
\label{prescribedFunctional}
Let $\rho\colon\mathcal{Q}^d\to\mathbb{R}$ be increasing with respect to the lower orthant order on $\mathcal{Q}^d$ and continuous with respect to the 
pointwise convergence of quasi-copulas. 
Define 
$${\mathcal{Q^{\rho,\theta}}:=\{Q\in\mathcal{Q}^d\colon \rho(Q)=\theta\}}$$ 
for $\theta\in(\rho(W_d),\rho(M_d))$.
Then the following bounds hold
\begin{align*}
\underline{Q}^{\rho,\theta}(\mathbf{u}) 
:= \min\big\{Q(\mathbf{u})\colon Q\in\mathcal{Q}^{\rho,\theta}\big\}
=\begin{cases}
  \rho_+^{-1}(\mathbf{u},\theta),
  & \theta\in\big[\rho\big(\overline{Q}^{\{\mathbf{u}\},W_d(\mathbf{u})}\big),
    \rho(M_d)\big],\\
  W_d(\mathbf{u}),
  & \text{else},
\end{cases}
\end{align*}
and
\begin{align*}
\overline{Q}^{\rho,\theta}(\mathbf{u}) 
:= \max\big\{Q(\mathbf{u})\colon Q\in\mathcal{Q}^{\rho,\theta}\big\}
=\begin{cases}
  \rho_-^{-1}(\mathbf{u},\theta),
  &\theta\in\big[\rho(W_d),\rho\big(\underline{Q}^{\{\mathbf{u}\}, 
    M_d(\mathbf{u})}\big)\big],\\
  M_d(\mathbf{u}),
  & \text{else},
\end{cases}
\end{align*}
and these are again quasi-copulas. 
Here
\begin{align*}
\rho_-^{-1}(\mathbf{u},\theta) 
  = \max\big\{r\colon \rho\big(\underline{Q}^{\{\mathbf{u}\},r}\big) 
    = \theta\big\}
\qquad\text{and}\qquad 
\rho_+^{-1}(\mathbf{u},\theta) 
  = \min\big\{r\colon \rho\big(\overline{Q}^{\{\mathbf{u}\},r}\big)
    = \theta\big\},
\end{align*}
while the quasi-copulas $\underline{Q}^{\{\mathbf{u}\},r}$ and $\overline{Q}^{\{\mathbf{u}\},r}$ are given in Theorem \ref{PrescriptionOnSubset} for $r\in\mathbb{I}$.

\begin{proof}
We will show that the upper bound is valid, while the proof for the lower bound follows analogously. 
First, note that due to the continuity of $\rho$ w.r.t. the  pointwise convergence of quasi-copulas and the compactness of $\mathcal{Q}^d$, we get that the set $\big\{Q(\mathbf{u})\colon Q\in\mathcal{Q}^{\rho,\theta}\big\}$ is compact, hence
$$
\sup\big\{Q(\mathbf{u})\colon Q\in\mathcal{Q}^{\rho,\theta}\big\} = \max\big\{Q(\mathbf{u})\colon Q\in\mathcal{Q}^{\rho,\theta}\big\}.
$$
Next, let $\theta\in\big[\rho(W_d),\rho\big(\underline{Q}^{\{\mathbf{u}\},M_d(\mathbf{u})}\big)\big]$, then $\overline{Q}^{\rho,\theta}(\mathbf{u})\leq\rho_-^{-1}(\mathbf{u},\theta)$ due to the construction of $\rho_-^{-1}(\mathbf{u},\theta)$. 
Moreover it holds that $\rho\big(\underline{Q}^{\{\mathbf{u}\},\rho_-^{-1}(\mathbf{u},\theta)}\big) = \theta$ since $r\mapsto\rho\big(\underline{Q}^{\{\mathbf{u}\},r}\big)$ is increasing and continuous, therefore $\overline{Q}^{\rho,\theta}(\mathbf{u})\geq\rho_-^{-1}(\mathbf{u},\theta)$. 
Hence, we can conclude that $\overline{Q}^{\rho,\theta}(\mathbf{u})=\rho_-^{-1}(\mathbf{u},\theta)$ whenever $\theta\in[\rho(W_d),\rho\big(\underline{Q}^{\{\mathbf{u}\},M_d(\mathbf{u})}\big)]$.

Now, let $\theta>\rho\Big(\underline{Q}^{\{\mathbf{u}\},M_d(\mathbf{u})}\Big)$, then $\theta\in\Big(\rho\Big(\underline{Q}^{\{\mathbf{u}\},M_d(\mathbf{u})}\Big),\rho(M_d)\Big]$. 
Consider $Q^\alpha = \alpha M_d + (1-\alpha)\underline{Q}^{\{\mathbf{u}\},M_d(\mathbf{u})}$, for $\alpha\in[0,1]$, then $\rho(Q^0) <\theta$ and $\rho(Q^1)\geq\theta$. 
Since $\alpha\mapsto \rho(Q^\alpha)$ is continuous there exists an $\alpha$ with $\rho(Q^\alpha)= \theta$. 
Since $Q^\alpha(\mathbf{u}) = M_d(\mathbf{u})$ for all $\alpha\in[0,1]$ it follows that $M_d(\mathbf{u})\leq \max\big\{Q(\mathbf{u})\colon Q\in\mathcal{Q}^{\rho,\theta}\big\}$, while the reverse inequality holds due to the upper \FH bound.

Finally, using Theorem 2.1 in \citet{Rodriguez_Ubeda_2004} we get immediately that the bounds are again quasi-copulas.
\end{proof}
\end{theorem}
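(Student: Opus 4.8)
The plan is to fix a point $\mathbf{u}\in\mathbb{I}^d$ and identify the extreme values of $Q(\mathbf{u})$ over all $Q\in\mathcal{Q}^{\rho,\theta}$ by comparing an arbitrary admissible $Q$ against the one-point prescription bounds of Theorem \ref{PrescriptionOnSubset}, applied with the prescription point equal to $\mathbf{u}$ itself. I will argue for the upper bound $\overline{Q}^{\rho,\theta}$; the lower bound follows from the symmetric argument, interchanging the roles of $\underline{Q}^{\{\mathbf{u}\},r}$, $M_d$, $\max$ with $\overline{Q}^{\{\mathbf{u}\},r}$, $W_d$, $\min$.

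As preparation I would record the relevant properties of the family $r\mapsto\underline{Q}^{\{\mathbf{u}\},r}$. From the explicit formula in Theorem \ref{PrescriptionOnSubset} with a singleton set, $r\mapsto\underline{Q}^{\{\mathbf{u}\},r}(\mathbf{v})$ is increasing and continuous for every $\mathbf{v}\in\mathbb{I}^d$, one has $\underline{Q}^{\{\mathbf{u}\},r}(\mathbf{u})=r$ for $r\in[W_d(\mathbf{u}),M_d(\mathbf{u})]$, and a direct check using $x^+\geq x$ gives $\underline{Q}^{\{\mathbf{u}\},W_d(\mathbf{u})}=W_d$ (and symmetrically $\overline{Q}^{\{\mathbf{u}\},M_d(\mathbf{u})}=M_d$). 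Since $\rho$ is increasing with respect to $\preceq_{LO}$, i.e. to the pointwise order on $\mathcal{Q}^d$, and continuous with respect to pointwise convergence, the map $g(r):=\rho\big(\underline{Q}^{\{\mathbf{u}\},r}\big)$ is increasing and continuous on $[W_d(\mathbf{u}),M_d(\mathbf{u})]$ with $g(W_d(\mathbf{u}))=\rho(W_d)$ and $g(M_d(\mathbf{u}))=\rho\big(\underline{Q}^{\{\mathbf{u}\},M_d(\mathbf{u})}\big)$. Hence, for $\theta$ in the stated range $[\rho(W_d),\rho(\underline{Q}^{\{\mathbf{u}\},M_d(\mathbf{u})})]$, the level set $\{r:g(r)=\theta\}$ is a nonempty compact interval by the intermediate value theorem, so $\rho_-^{-1}(\mathbf{u},\theta)=\max\{r:g(r)=\theta\}$ is well defined and monotonicity of $g$ yields the key equivalence $g(r)\leq\theta\iff r\leq\rho_-^{-1}(\mathbf{u},\theta)$. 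I would also note here that $\mathcal{Q}^{\rho,\theta}$ is nonempty — apply the intermediate value theorem to $\alpha\mapsto\rho(\alpha M_d+(1-\alpha)W_d)$, a continuous function running from $\rho(W_d)$ to $\rho(M_d)$ along quasi-copulas — and that, as $\rho^{-1}(\{\theta\})$ inside the compact set $\mathcal{Q}^d$, it is compact; since $Q\mapsto Q(\mathbf{u})$ is continuous, the supremum defining $\overline{Q}^{\rho,\theta}(\mathbf{u})$ is attained.

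The heart of the proof is then the two-case identification of $\overline{Q}^{\rho,\theta}(\mathbf{u})$. \emph{Case $\theta\leq\rho(\underline{Q}^{\{\mathbf{u}\},M_d(\mathbf{u})})$.} For ``$\leq$'', take any $Q\in\mathcal{Q}^{\rho,\theta}$ and set $s:=Q(\mathbf{u})$; then $Q$ satisfies the one-point prescription with value $s$ at $\mathbf{u}$, so Theorem \ref{PrescriptionOnSubset} gives $\underline{Q}^{\{\mathbf{u}\},s}\preceq_{LO}Q$, whence $g(s)=\rho(\underline{Q}^{\{\mathbf{u}\},s})\leq\rho(Q)=\theta$, i.e. $s\leq\rho_-^{-1}(\mathbf{u},\theta)$ by the equivalence above. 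For ``$\geq$'', the quasi-copula $\underline{Q}^{\{\mathbf{u}\},\rho_-^{-1}(\mathbf{u},\theta)}$ itself belongs to $\mathcal{Q}^{\rho,\theta}$ — by the very definition of $\rho_-^{-1}$ — and attains the value $\rho_-^{-1}(\mathbf{u},\theta)$ at $\mathbf{u}$. \emph{Case $\theta>\rho(\underline{Q}^{\{\mathbf{u}\},M_d(\mathbf{u})})$.} Here the upper \FH bound already gives $\overline{Q}^{\rho,\theta}(\mathbf{u})\leq M_d(\mathbf{u})$; for the reverse inequality I would use a convex-interpolation argument: $\alpha\mapsto Q^\alpha:=\alpha M_d+(1-\alpha)\underline{Q}^{\{\mathbf{u}\},M_d(\mathbf{u})}$ takes values in $\mathcal{Q}^d$ (conditions \ref{cond:QC1}--\ref{cond:QC3} are preserved under convex combinations), satisfies $Q^\alpha(\mathbf{u})=M_d(\mathbf{u})$ for every $\alpha$, and $\alpha\mapsto\rho(Q^\alpha)$ is continuous with $\rho(Q^0)=\rho(\underline{Q}^{\{\mathbf{u}\},M_d(\mathbf{u})})<\theta$ and $\rho(Q^1)=\rho(M_d)>\theta$ (recall $\theta<\rho(M_d)$), so some $Q^{\alpha^\ast}\in\mathcal{Q}^{\rho,\theta}$ realizes $M_d(\mathbf{u})$. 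The lower bound is handled symmetrically, using $\overline{Q}^{\{\mathbf{u}\},r}$ in place of $\underline{Q}^{\{\mathbf{u}\},r}$ and the interpolation $\alpha W_d+(1-\alpha)\overline{Q}^{\{\mathbf{u}\},W_d(\mathbf{u})}$ in the ``else'' branch.

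Finally, to see that $\underline{Q}^{\rho,\theta}$ and $\overline{Q}^{\rho,\theta}$ are themselves quasi-copulas I would observe that they equal the pointwise infimum, respectively supremum, of the nonempty family $\mathcal{Q}^{\rho,\theta}$, and that \ref{cond:QC1}--\ref{cond:QC3} pass to pointwise infima and suprema of quasi-copulas: the boundary conditions and monotonicity are immediate, while the $\ell^1$-Lipschitz bound survives because $|\sup_Q Q(\mathbf{u})-\sup_Q Q(\mathbf{v})|\leq\sup_Q|Q(\mathbf{u})-Q(\mathbf{v})|\leq\sum_{i}|u_i-v_i|$ and likewise for the infimum; alternatively one may invoke Theorem~2.1 of \citet{Rodriguez_Ubeda_2004}. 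I expect the most delicate part to be the bookkeeping around $\rho_-^{-1}$ and $\rho_+^{-1}$: verifying that $g$ (and its counterpart $r\mapsto\rho(\overline{Q}^{\{\mathbf{u}\},r})$) is genuinely monotone and continuous, that the level sets are intervals so the $\max$ and $\min$ in the definitions of $\rho_\mp^{-1}$ are legitimate, that the case thresholds $\rho(\underline{Q}^{\{\mathbf{u}\},M_d(\mathbf{u})})$ and $\rho(\overline{Q}^{\{\mathbf{u}\},W_d(\mathbf{u})})$ are indeed the correct cut-offs and in particular need \emph{not} coincide with $\rho(M_d)$ and $\rho(W_d)$ since the one-point extremal quasi-copulas differ from $M_d$ and $W_d$, and that the two branches of each formula glue continuously at the threshold.
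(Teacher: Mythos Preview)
Your proposal is correct and follows essentially the same route as the paper: compactness to pass from $\sup$ to $\max$, the comparison $\underline{Q}^{\{\mathbf u\},Q(\mathbf u)}\preceq_{LO} Q$ from Theorem~\ref{PrescriptionOnSubset} combined with monotonicity of $\rho$ to get the ``$\leq$'' direction, the witness $\underline{Q}^{\{\mathbf u\},\rho_-^{-1}(\mathbf u,\theta)}$ for ``$\geq$'', the convex interpolation $Q^\alpha=\alpha M_d+(1-\alpha)\underline{Q}^{\{\mathbf u\},M_d(\mathbf u)}$ for the ``else'' branch, and the appeal to \citet{Rodriguez_Ubeda_2004} (or the equivalent direct pointwise-extremum argument) for the quasi-copula property. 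Your write-up is in fact more explicit than the paper's in several places---well-definedness of $\rho_-^{-1}$, nonemptiness of $\mathcal{Q}^{\rho,\theta}$, and the identity $\underline{Q}^{\{\mathbf u\},W_d(\mathbf u)}=W_d$---which is all to the good.
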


\begin{remark}
The bounds in Theorem \ref{prescribedFunctional} hold analogously for copulas, i.e. for $\rho$ and $\theta$ as in Theorem \ref{prescribedFunctional} we have 
$\underline{Q}^{\rho,\theta}\preceq_{LO} C \preceq_{LO} \overline{Q}^{\rho,\theta}$ for all $C\in \{C\in\mathcal{C}^d\colon \rho(C) = \theta\}$.
\end{remark}

\begin{remark}
The bounds $\underline{Q}^{\rho,\theta}$ and $\overline{Q}^{\rho,\theta}$ do not belong to the set $\mathcal{Q}^{\rho,\theta}$ in general. 
A counterexample in dimension 2 is provided by combining \citet[Theorem 2]{tankov} with \citet[Corollary 3(h)]{nelsen2}.
Indeed, from the first reference we get that 
\begin{align*}
\underline{Q}^{\rho,\theta}(\mathbf{u}) = \min\big\{Q(\mathbf{u})\colon Q\in\mathcal{Q}^{\rho,\theta}\cap\mathcal{C}^2\big\}
\quad \text{and} \quad 
\overline{Q}^{\rho,\theta}(\bu) = \max\big\{Q(\mathbf{u})\colon Q\in\mathcal{Q}^{\rho,\theta}\cap\mathcal{C}^2\big\},
\end{align*}
while the second one shows that neither of these bounds belongs to $\mathcal{Q}^{\rho,\theta}$ when $\rho\in(-1,1)$, where $\rho$ stands for Kendall's tau in this case.
\end{remark}


In the next theorem we construct improved \FH bounds assuming that information only on some lower-dimensional marginals of a quasi-copula is available. 
This result corresponds to the situation where one is interested in a high-dimensional random vector, however information on the dependence structure is only available for lower-dimensional vectors thereof. 
As an example, in mathematical finance one is interested in options on several assets, however information on the dependence structure---stemming e.g. from other liquid option prices---is typically available only on pairs of those assets. 

Let us introduce a convenient subscript notation for the lower-dimensional marginals of a quasi-copula. 
Consider a subset $I=\{i_1,\dots,i_n\}\subset\{1,\dots,d\}$ and define the \textit{projection} of a vector $\bu\in\R^d$ to the lower-dimensional space $\R^n$ via 
$\bu_I:=(u_{i_1},\dots,u_{i_n})\in\R^n$. 
Moreover, define the \textit{lift} of the vector $\bu_I\in\R^n$ to the higher-dimensional space $\R^d$ by $\bu_I'=:\bv\in\R^d$ where $v_i=u_i$ if $i\in I$ and $v_i=1$ if $i\notin I$. 
Then, we can define the $I$-\textit{margin} of the $d$-quasi-copula $Q$ via $Q_I: \mathbb{I}^n \to \mathbb{I}$ with $\bu_I \mapsto Q(\bu_I').$ 

\begin{remark}\label{rem:pro-lift}
Let $\bu\in\mathbb{I}^d$ and $I\subset\{1,\dots,d\}$. 
Then, by first projecting $\bu$ and then lifting it back, we get that $\bu\le\bu_I'$ (where $\le$ denotes the component-wise order).  
Hence, by \ref{cond:QC2} we get that $Q(\bu) \le Q_I(\bu_I) = Q(\bu_I')$.
\end{remark}

\begin{theorem}
\label{regionalPrescription}
Let $I_1,\dots,I_k$ be subsets of $\{1,\dots,d\}$ with $|I_j|\geq 2$ for $j\in\{1,\dots,k\}$ and $|I_i\cap I_j| \leq 1$ for $i,j\in\{1,\dots,k\}$, $i\neq j$. 
Let $\underline{Q}_j,\overline{Q}_j$ be $|I_j|$-quasi-copulas with $\underline{Q}_j\preceq_{LO}\overline{Q}_j$ for $j=1,\dots,k$, and consider the set
$$\mathcal{Q}^{I} = \left\{ Q\in\mathcal{Q}^d\colon 
  \underline{Q}_j\preceq_{LO} Q_{I_j}\preceq_{LO} \overline{Q}_j,\ j=1,\dots,k 
  \right\},$$
where $Q_{I_j}$ are the $I_j$-margins of $Q$. 
Then $\mathcal{Q}^{I}$ is non-empty and the following bounds hold
\begin{align*}
&\underline{Q}^I(\mathbf{u}) 
  := \min\big\{Q(\mathbf{u})\colon Q\in\mathcal{Q}^{I}\big\}\\
& \qquad\qquad =\max\bigg( \max_{j\in\{1,\dots,k\}} 
  \Big\{\underline{Q}_j(\mathbf{u}_{I_j})+\sum_{l\in\{1,\dots,d\}\setminus 
  I_j} (u_l-1)\Big\},W_d(\mathbf{u})\bigg),\\
&\overline{Q}^I(\mathbf{u}) 
  := \max\big\{Q(\mathbf{u})\colon Q\in\mathcal{Q}^{I} \big\} 
  =\min\Big(\min_{j\in\{1,\dots,k\}}\big\{ 
  \overline{Q}_j(\mathbf{u}_{I_j})\big\}, M_{d} (\mathbf{u})\Big).
\end{align*} 
Moreover $\underline{Q}^I, \overline{Q}^I\in\mathcal{Q}^{I}$, hence the bounds are sharp.

\begin{proof}
Let $Q\in\mathcal{Q}^{I}$ and $\mathbf{u}\in\mathbb{I}^d$. 
We first show that the upper bound $\overline{Q}^I$ is valid. 
It follows directly from Remark \ref{rem:pro-lift} that
\begin{align*}
Q(\mathbf{u})\leq Q(\mathbf{u}'_{I_j}) = Q_{I_j}(\mathbf{u}_{I_j})
  \leq \overline{Q}_j(\mathbf{u}_{I_j}), \quad \text{for all } j=1,\dots,k,
\end{align*}
hence $Q(\mathbf{u}) \leq \min_{j\in\{1,\dots,k\}} \big\{\overline{Q}_j(\mathbf{u}_{I_j})\big\}$. 
Incorporating the upper \FH bound yields $\overline{Q}^I$. 
Moreover, \ref{cond:QC1} and \ref{cond:QC2} follow immediately since $\overline{Q}_j$ are quasi-copulas for $j=1,\dots,k$, while $\overline{Q}^I$ is a composition of Lipschitz functions and hence Lipschitz itself, i.e. \ref{cond:QC3} also holds. 
Thus $\overline{Q}^I$ is indeed a quasi-copula.

As for the lower bound, using once more the projection and lift operations and the Lipschitz property of quasi-copulas we have
\begin{align*}
Q(\mathbf{u}) 
  &\geq Q(\mathbf{u}'_{I_j})+\sum_{l\in\{1,\dots,d\}\setminus I_j} (u_l-1) 
  = Q_{I_j}(\mathbf{u}_{I_j})+\sum_{l\in\{1,\dots,d\}\setminus I_j}(u_l-1)\\
  &\geq \underline{Q}_j(\mathbf{u}_{I_j})+\sum_{l\in\{1,\dots,d\}\setminus I_j} 
    (u_l-1), \quad \text{for all } j=1,\dots,k.
\end{align*}
Therefore,
\begin{align}
Q(\mathbf{u}) 
  \geq \max_{j\in\{1,\dots,k\}} \Big\{\underline{Q}_j(\mathbf{u}_{I_j}) 
  + \sum_{l\in\{1,\dots,d\}\setminus I_j} (u_l-1)\Big\},
\end{align}
and including the lower \FH bound yields $\underline{Q}^I$. 
In order to verify that $\underline{Q}^I$ is a quasi-copula, first consider $\mathbf{u}\in\mathbb{I}^d$ with $u_i=0$ for at least one $i\in\{1,\dots,d\}$. 
Then $W_d(\mathbf{u})=0$,
$$\underline{Q}_j(\mathbf{u}_{I_j}) + \sum_{l\in\{1,\dots,d\}\setminus I_j} 
  (u_l-1) \leq \underline{Q}_j(\mathbf{u}_{I_j})-1 \leq 0 
  \quad\text{if }i\in\{1,\dots,d\}\setminus I_j,$$
and 
$$\underline{Q}_j(\mathbf{u}_{I_j}) + \sum_{l\in\{1,\dots,d\}\setminus I_j} 
  (u_l-1) = \sum_{l\in\{1,\dots,d\}\setminus I_j} (u_l-1) \leq 0
  \quad\text{if } i\in I_j,$$ 
for all $j=1,\dots,k$. Hence $\underline{Q}^I(\mathbf{u}) = 0$. 
In addition, for $\mathbf{u}\in\mathbb{I}^d$ with $\mathbf{u}=\mathbf{u}'_{\{i\}}$, it follows that $W_d(\mathbf{u}) = u_i$ and 
$$\underline{Q}_j(\mathbf{u}_{I_j}) + \sum_{l\in\{1,\dots,d\}\setminus I_j} 
  (u_l-1) = 1+(u_i-1) = u_i \quad\text{if }i\in\{1,\dots,d\}\setminus I_j,$$
while clearly $\underline{Q}_j(\mathbf{u}_{I_j})=u_i$ if $i\in I_j$, for all $j=1,\dots,d$. 
Hence $\underline{Q}^I(\mathbf{u}) = u_i$, showing that $\underline{Q}^I$ fulfills \ref{cond:QC1}. \ref{cond:QC2} is immediate, while noting that $\underline{Q}^I$ is a composition of Lipschitz functions and hence Lipschitz itself shows that the lower bound is also a $d$-quasi-copula.

Finally, knowing that $\underline{Q}^I,\overline{Q}^I$ are quasi-copulas it remains to show that both bounds are in $\mathcal{Q}^{I}$, i.e. we need to show 
that $\underline{Q}_j\preceq \big(\underline{Q}^I\big)_{I_j}, \big(\overline{Q}^I\big)_{I_j}\preceq \overline{Q}_j$ for all $j=1,\dots,k$. 
For the upper bound it holds by definition that $\big(\overline{Q}^I\big)_{I_j} \preceq \overline{Q}_j$ for $j=1,\dots,k$. 
Moreover since $|I_i\cap I_j| \leq 1$ it follows that $ \big(\overline{Q}^I\big)_{I_j} = \overline{Q}_j$, hence $\underline{Q}_j \preceq \big(\overline{Q}^I\big)_{I_j} \preceq \overline{Q}_j$ for $j=1,\dots,k$ and $ \overline{Q}^I\in\mathcal{Q}^{I}$. 
By the same argument it holds for the lower bound that $\big(\underline{Q}^I\big)_{I_j} = \underline{Q}_j$ for $j=1,\dots,d$, thus $\underline{Q}_j\preceq 
\big(\underline{Q}^I\big)_{I_j}\preceq \overline{Q}_j$ for $j=1,\dots,k$, showing that $\underline{Q}_j\preceq \big(\underline{Q}^I\big)_{I_j}, \big(\overline{Q}^I\big)_{I_j}\preceq \overline{Q}_j$ holds indeed.
\end{proof}
\end{theorem}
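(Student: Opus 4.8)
The plan is to proceed in four steps: (i) show that the two candidate functions bound every $Q\in\mathcal{Q}^I$; (ii) show that they are themselves $d$-quasi-copulas; (iii) show that they belong to $\mathcal{Q}^I$; and (iv) conclude. Step (iii) both establishes that $\mathcal{Q}^I$ is non-empty and, combined with (i), shows that the bounds are attained — hence sharp, and equal to the pointwise $\min$/$\max$ in the statement.

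For (i), fix $Q\in\mathcal{Q}^I$ and $\mathbf{u}\in\mathbb{I}^d$. By Remark~\ref{rem:pro-lift} we have $\mathbf{u}\le\mathbf{u}'_{I_j}$ componentwise, so \ref{cond:QC2} yields $Q(\mathbf{u})\le Q(\mathbf{u}'_{I_j})=Q_{I_j}(\mathbf{u}_{I_j})\le\overline{Q}_j(\mathbf{u}_{I_j})$ for each $j$, which together with the classical bound $Q(\mathbf{u})\le M_d(\mathbf{u})$ gives $Q(\mathbf{u})\le\overline{Q}^I(\mathbf{u})$. For the lower bound, since $\mathbf{u}$ and $\mathbf{u}'_{I_j}$ agree on the coordinates of $I_j$, the Lipschitz property \ref{cond:QC3} gives
\begin{align*}
Q(\mathbf{u}) &\ge Q(\mathbf{u}'_{I_j}) - \sum_{l\in\{1,\dots,d\}\setminus I_j}(1-u_l) = Q_{I_j}(\mathbf{u}_{I_j}) + \sum_{l\in\{1,\dots,d\}\setminus I_j}(u_l-1)\\
&\ge \underline{Q}_j(\mathbf{u}_{I_j}) + \sum_{l\in\{1,\dots,d\}\setminus I_j}(u_l-1),
\end{align*}
and combining the maximum over $j$ with the lower \FH bound $Q(\mathbf{u})\ge W_d(\mathbf{u})$ yields $Q(\mathbf{u})\ge\underline{Q}^I(\mathbf{u})$. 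For (ii), \ref{cond:QC2} is immediate and \ref{cond:QC3} holds because both bounds are pointwise minima/maxima of $1$-Lipschitz maps (the functions $\overline{Q}_j$ and $\underline{Q}_j$ composed with coordinate projections, plus a linear term). The boundary conditions \ref{cond:QC1} follow from a short case analysis: if some $u_i=0$ then $M_d(\mathbf{u})=W_d(\mathbf{u})=0$ and every remaining argument of the min, resp. max, is $\le0$ — using $\overline{Q}_j\le M_{|I_j|}$ for the upper bound, and for the lower bound that $\underline{Q}_j$ vanishes when $i\in I_j$ while $\sum_{l\notin I_j}(u_l-1)\le-1$ when $i\notin I_j$; and if $\mathbf{u}=\mathbf{u}'_{\{i\}}$, splitting again on $i\in I_j$ versus $i\notin I_j$ and invoking the boundary conditions of $\overline{Q}_j,\underline{Q}_j$ shows the value equals $u_i$.

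The crux is step (iii), and it is here that the hypothesis $|I_i\cap I_j|\le1$ enters decisively. I would prove the sharper identities $\big(\overline{Q}^I\big)_{I_j}=\overline{Q}_j$ and $\big(\underline{Q}^I\big)_{I_j}=\underline{Q}_j$ for every $j$; since $\underline{Q}_j\preceq_{LO}\overline{Q}_j$, these immediately give $\underline{Q}^I,\overline{Q}^I\in\mathcal{Q}^I$. Computing the $I_j$-margin amounts to evaluating the bounds at a lifted point $\mathbf{v}$, i.e. one with $v_l=1$ for all $l\notin I_j$. At such a point the $j$-th term of the min, resp. max, already reproduces $\overline{Q}_j(\mathbf{v}_{I_j})$, resp. $\underline{Q}_j(\mathbf{v}_{I_j})$ (the linear correction vanishes there), so it remains to check that no other term is strictly smaller, resp. larger. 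The decisive observation is that for $i\ne j$ the projection $\mathbf{v}_{I_i}$ has at most one coordinate different from $1$ — precisely because $|I_i\cap I_j|\le1$ — so by the boundary conditions of $\overline{Q}_i$, resp. $\underline{Q}_i$, the $i$-th term collapses to an explicit expression: for the min it is either $1$ or a single coordinate $v_{l_0}$ with $l_0\in I_j$, both of which are $\ge M_{|I_j|}(\mathbf{v}_{I_j})\ge\overline{Q}_j(\mathbf{v}_{I_j})$, while also $M_d(\mathbf{v})=M_{|I_j|}(\mathbf{v}_{I_j})$; for the max, combining $\underline{Q}_i(\mathbf{v}_{I_i})$ with the linear correction $\sum_{l\in I_j\setminus I_i}(v_l-1)$ gives exactly $\sum_{l\in I_j}v_l-|I_j|+1$, which is $\le W_{|I_j|}(\mathbf{v}_{I_j})\le\underline{Q}_j(\mathbf{v}_{I_j})$, while also $W_d(\mathbf{v})=W_{|I_j|}(\mathbf{v}_{I_j})$. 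I expect this last piece of bookkeeping — reconciling the linear correction terms with the boundary behaviour of the lower-dimensional quasi-copulas and with $W_d$ — to be the main obstacle; everything else is routine. Once (iii) holds, $\mathcal{Q}^I\ne\emptyset$, the infimum/supremum defining the bounds are attained, and the bounds are sharp, which gives (iv).
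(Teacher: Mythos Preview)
Your proposal is correct and follows essentially the same route as the paper: validity of the bounds via monotonicity/Lipschitz and the lift operation, verification of \ref{cond:QC1}--\ref{cond:QC3}, and membership in $\mathcal{Q}^I$ by computing the $I_j$-margins of the two bounds. In fact your step~(iii) spells out more carefully than the paper why the intersection condition $|I_i\cap I_j|\le1$ forces the $i$-th term (for $i\neq j$) to collapse at a lifted point, which is exactly the point the paper leaves implicit.
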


\begin{remark}
\label{regionalPrescriptionRemark}
The bounds in Theorem \ref{regionalPrescription} hold analogously for copulas. 
That is, for subsets $I_1,\dots.,I_k$ and quasi-copulas $\underline{Q}_j,\overline{Q}_j$ as in Theorem \ref{regionalPrescription} and defining
$$\mathcal{C}^{I} := \left\{C\in\mathcal{C}^d\colon 
  \underline{Q}_j\preceq_{LO} C_{I_j}\preceq_{LO} \overline{Q}_j,\ j=1,\dots,k 
  \right\}$$
it follows that $\underline{Q}^I\preceq_{LO} C \preceq_{LO}\overline{Q}^I$ for all $C\in \mathcal{C}^{I}$. 
\end{remark}

\section{Are the improved \FH bounds copulas?}
\label{boundsQuasiCopulas}

An interesting question arising now is under what conditions the improved \FH bounds are copulas and not merely quasi-copulas. 
This would allow us, for example, to translate those bounds on the copulas to bounds on the expectations with respect to the underlying random variables. 
\citet{tankov} showed that if $d=2$, then $\underline{Q}^{\set,Q^*}$ and $\overline{Q}^{\set,Q^*}$ are copulas under certain constraints on the set $\set$. 
In particular, if $\set$ is increasing (also called comonotone), that is if $(u_1,u_2),(v_1,v_2)\in \set$ then $(u_1-v_1)(u_2-v_2)\ge0$ holds, then the lower bound $\underline{Q}^{\set,Q^*}$ is a copula. 
Conversely, if $\set$ is decreasing (also called countermonotone), that is if $(u_1,u_2),(v_1,v_2)\in \set$ then $(u_1-v_1)(u_2-v_2)\le0$ holds, then the upper bound $\overline{Q}^{\set,Q^*}$ is a copula. 
\citet{bernard} relaxed these constraints and provided minimal conditions on $\set$ such that the bounds are copulas. 
The situation however is more complicated for $d>2$. On the one hand, the notion of a decreasing set is not clear. 
On the other hand, the following counterexample shows that the condition of $\set$ being an increasing set is not sufficient for $\underline{Q}^{\set,Q^*}$ to be a copula. 

\begin{example}
Let $\set=\big\{(u,u,u)\colon u\in\big[0,\frac12\big]\cup\big[\frac35,1\big]\big\}\subset\mathbb{I}^3$ and $Q^*$ be the independence copula, i.e. $Q^*(u_1,u_2,u_3) = u_1u_2u_3$ for $(u_1,u_2,u_3)\in\mathbb{I}^3$. 
Then $\set$ is clearly an increasing set, however $\underline{Q}^{\set,Q^*}$ is not a copula. 
To this end, it suffices to show that the $\underline{Q}^{\set,Q^*}$-volume of some subset of $\mathbb{I}^3$ is negative. 
Indeed, for $\big[\frac{56}{100},\frac35\big]^3\subset\mathbb{I}^3$ after some straightforward calculations we get that
\begin{multline*}
V_{\underline{Q}^{\set,Q^*}}\left(\left[\frac{56}{100},\frac35\right]^3\right) 
= \left(\frac35\right)^3 
  - 3\left[\left(\frac35\right)^3 - \left(\frac35-\frac{56}{100}\right)\right]\\
 + 3\left[\left(\frac35\right)^3 -2\left(\frac35-\frac{56}{100}\right)\right]
  - \left(\frac12\right)^3
= -0.029 < 0.
\end{multline*}
\end{example}

In the trivial case where $\set=\mathbb{I}^d$ and $Q^*$ is a $d$-copula, then both bounds from Theorem \ref{PrescriptionOnSubset} are copulas for $d>2$ since they equate to $Q^*$. 
Moreover, the upper bound is a copula for $d>2$ if it coincides with the upper \FH bound. 
The next result shows that essentially only in these trivial situations are the bounds copulas for $d>2$. 
Out of instructive reasons we first discuss the case $d=3$, and defer the general result for $d>3$ to Appendix \ref{AppB}. 

\begin{theorem}
\label{lowerBoundIsQuasiCopula}
Consider the compact subset $\set$ of $\I^3$
\begin{align}
\label{minimalS}
\set &={\Big([0,1]\setminus(s_1,s_1+\varepsilon_1)\Big)} 
  \times {\Big([0,1]\setminus(s_2,s_2+\varepsilon_2)\Big)}
  \times {\Big([0,1]\setminus(s_3,s_3+\varepsilon_3)\Big)},
\end{align}
for $\varepsilon_i>0$, $i=1,2,3$ and let $C^*$ be a $3$-copula (or a 
$3$-quasi-copula) such that 
\begin{align}
& \sum_{i=1}^3 \varepsilon_i > 
  C^*(\mathbf{s}+\pmb{\varepsilon})-C^*(\mathbf{s})>0, \label{req1.1}\\
& C^*(\mathbf{s}) \geq  W_3(\mathbf{s}+\pmb{\varepsilon}) \label{req1.2},
\end{align}
where 
$\mathbf{s}=(s_1,s_2,s_3),\pmb{\varepsilon}=(\varepsilon_1,\varepsilon_2,
\varepsilon_3)$. Then $\underline{Q}^{\set,C^*}$ is a proper quasi-copula.
\end{theorem}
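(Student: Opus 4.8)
The plan is to exhibit a single rectangular box in $\I^3$ whose $\underline{Q}^{\set,C^*}$-volume is strictly negative, which immediately shows that $\underline{Q}^{\set,C^*}$ violates \ref{cond:QC4} and hence is a proper quasi-copula. The natural candidate box is $H = [s_1,s_1+\varepsilon_1]\times[s_2,s_2+\varepsilon_2]\times[s_3,s_3+\varepsilon_3]$, i.e.\ the ``hole'' in $\set$ together with its corners; all eight vertices of $H$ lie in $\set$, so on those vertices $\underline{Q}^{\set,C^*}$ coincides with $C^*$ by \eqref{inequality}. The subtlety is that the volume $V_{\underline{Q}^{\set,C^*}}(H)$ is \emph{not} simply $V_{C^*}(H)$, because the lower bound is an infimum over prescription points and for a vertex $\mathbf{x}$ of $H$ the value $\underline{Q}^{\set,C^*}(\mathbf{x})$ may be attained not at $\mathbf{x}\in\set$ but via the Fr\'echet--Hoeffding term or via another point of $\set$; this is precisely why the hypotheses \eqref{req1.1} and \eqref{req1.2} are imposed.

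First I would compute $\underline{Q}^{\set,C^*}$ at the eight vertices of $H$. Using \eqref{bounds} with $\set$ as in \eqref{minimalS}, for a vertex one takes the maximum over the competing expressions: the constant $0$, the lower \FH term $\sum u_i - 2$, and $\max_{\mathbf{x}\in\set}\{C^*(\mathbf{x}) - \sum (x_i-u_i)^+\}$. For the vertex $\mathbf{s}+\pmb\varepsilon$ itself the prescription gives the value $C^*(\mathbf{s}+\pmb\varepsilon)$, and condition \eqref{req1.2} together with $C^*\le M_3$ should ensure that neither the \FH term nor any other prescription point overtakes it. For the ``lower'' vertex $\mathbf{s}$ one gets $C^*(\mathbf{s})$, again checking that the competing terms (in particular $W_3(\mathbf{s})$, which is dominated using \eqref{req1.2} and monotonicity of $W_3$) do not exceed it. The six ``mixed'' vertices are the delicate ones: a vertex of the form $(s_1+\varepsilon_1, s_2+\varepsilon_2, s_3)$, say, has some coordinates at the top of the hole and some at the bottom; here I expect the value to be governed by the point $\mathbf{s}+\pmb\varepsilon\in\set$ through $C^*(\mathbf{s}+\pmb\varepsilon) - \sum(x_i - u_i)^+ = C^*(\mathbf{s}+\pmb\varepsilon) - \varepsilon_3$, and condition \eqref{req1.1}, which says $C^*(\mathbf{s}+\pmb\varepsilon) - C^*(\mathbf{s}) < \varepsilon_1+\varepsilon_2+\varepsilon_3$, is exactly what forces this ``sliding'' term to win over the prescription at $\mathbf{s}$ and over $0$. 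One must be careful that no \emph{other} point of $\set$ (lying outside the $(s,s+\varepsilon)$ block in one or more coordinates) gives a larger value; Lipschitzness of $C^*$ and the fact that moving a prescription coordinate costs at least as much as it gains should rule this out.

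Having established that at the eight vertices $\underline{Q}^{\set,C^*}$ equals, respectively, $C^*(\mathbf{s})$ at $\mathbf{s}$, $C^*(\mathbf{s}+\pmb\varepsilon)$ at $\mathbf{s}+\pmb\varepsilon$, and $C^*(\mathbf{s}+\pmb\varepsilon) - \varepsilon_{i}$ (resp.\ $C^*(\mathbf{s}+\pmb\varepsilon)-\varepsilon_i-\varepsilon_j$) at the vertices with one (resp.\ two) coordinate(s) at the low end, I would substitute these eight numbers into the inclusion--exclusion formula for $V_{\underline{Q}^{\set,C^*}}(H)$. The $C^*(\mathbf{s}+\pmb\varepsilon)$ contributions from the seven vertices other than $\mathbf{s}$ cancel with the alternating signs, the $\varepsilon_i$ terms combine into a telescoping expression, and what survives is of the form $-\,C^*(\mathbf{s}) + C^*(\mathbf{s}+\pmb\varepsilon) - (\varepsilon_1+\varepsilon_2+\varepsilon_3)$, which is strictly negative precisely by \eqref{req1.1}. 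That contradicts $3$-increasingness, so $\underline{Q}^{\set,C^*}\notin\mathcal{C}^3$; since Theorem \ref{PrescriptionOnSubset} already guarantees it is a $3$-quasi-copula, it is a proper one.

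The main obstacle is the vertex computation: verifying that at each of the six mixed vertices the maximum in \eqref{bounds} is genuinely attained at the claimed term and not at some stray point of the large set $\set$. This requires a short case analysis exploiting the Lipschitz bound on $C^*$ and the structure of $\set$ as a product of ``intervals with a gap removed'', and it is where hypotheses \eqref{req1.1}--\eqref{req1.2} are used in full; once the vertex values are pinned down, the concluding volume computation is just bookkeeping.
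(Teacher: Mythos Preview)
Your plan contains a genuine error in the choice of box. You correctly observe that all eight vertices of $H=[\mathbf{s},\mathbf{s}+\pmb\varepsilon]$ lie in $\set$ and that therefore $\underline{Q}^{\set,C^*}$ coincides with $C^*$ at each of them by \eqref{inequality}. But then you immediately contradict yourself: at a mixed vertex such as $(s_1+\varepsilon_1,s_2+\varepsilon_2,s_3)\in\set$ the value of $\underline{Q}^{\set,C^*}$ is exactly $C^*(s_1+\varepsilon_1,s_2+\varepsilon_2,s_3)$, \emph{not} $C^*(\mathbf{s}+\pmb\varepsilon)-\varepsilon_3$ as you claim. The ``sliding'' term $C^*(\mathbf{s}+\pmb\varepsilon)-\sum(x_i-u_i)^+$ can never exceed $C^*(\mathbf{u})$ for $\mathbf{u}\in\set$ (this is precisely the Lipschitz argument in the proof of Theorem \ref{PrescriptionOnSubset}), so the maximum in \eqref{bounds} is attained at $\mathbf{x}=\mathbf{u}$. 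Consequently $V_{\underline{Q}^{\set,C^*}}(H)=V_{C^*}(H)\ge 0$ whenever $C^*$ is a copula, and your box gives no contradiction.

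The paper's proof fixes this by taking the \emph{lower} corner of the box strictly inside the hole: one chooses $\mathbf{u}\in(\mathbf{s},\mathbf{s}+\pmb\varepsilon)$ satisfying two auxiliary inequalities (which \eqref{req1.1} guarantees is possible) and works with the box $[\mathbf{u},\mathbf{s}+\pmb\varepsilon]$. Now seven of the eight vertices lie \emph{outside} $\set$; at those vertices $\underline{Q}^{\set,C^*}$ is no longer pinned to $C^*$, and the sliding term from $\mathbf{s}+\pmb\varepsilon$ (or the value $C^*(\mathbf{s})$ at the bottom vertex $\mathbf{u}$) genuinely wins. The computation you sketched in your final paragraph then goes through verbatim with $s_i+\varepsilon_i-u_i$ in place of $\varepsilon_i$, yielding $V_{\underline{Q}^{\set,C^*}}([\mathbf{u},\mathbf{s}+\pmb\varepsilon])=C^*(\mathbf{s}+\pmb\varepsilon)-C^*(\mathbf{s})-\sum_i(s_i+\varepsilon_i-u_i)<0$. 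Conditions \eqref{req1.1}--\eqref{req1.2} are used exactly to identify which competing term realises the maximum at each off-$\set$ vertex.
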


\begin{proof}
Assume that $C^*$ is a $d$-copula and choose 
$\mathbf{u}=(u_1,u_2,u_3)\in(\mathbf{s},\mathbf{s}+\pmb{\varepsilon})$ such that
\begin{align}
&C^*(\mathbf{s}+\pmb{\varepsilon})-C^*(\mathbf{s}) < \sum_{i=1}^3 
(s_i+\varepsilon_i -u_i)\quad\text{and}\label{req1.3}\\
&C^*(\mathbf{s}+\pmb{\varepsilon})-C^*(\mathbf{s}) > \sum_{i\in J} 
(s_i+\varepsilon_i -u_i)\quad \text{for } J = (1,2),(2,3),(1,3); \label{req1.4}
\end{align}
such a $\mathbf{u}$ exists due to \eqref{req1.1}. In order to show that 
$\underline{Q}^{\set,C^*}$ is not 3-increasing, and thus not a (proper) copula, 
it suffices to prove that 
$V_{\underline{Q}^{\set,C^*}}([\mathbf{u},\mathbf{s}+\pmb{\varepsilon}])<0$. By 
the definition of $V_{\underline{Q}^{\set,C^*}}$ we have
\begin{align*}
V_{\underline{Q}^{\set,C^*}}([\mathbf{u},\mathbf{s}+\pmb{\varepsilon}]) 
  &= \underline{Q}^{\set,C^*}(\mathbf{s}+\pmb{\varepsilon}) 
    -\underline{Q}^{\set,C^*}(u_1,s_2+\varepsilon_2,s_3+\varepsilon_3)\\
  &\quad -\underline{Q}^{\set,C^*}(s_1+\varepsilon_1,u_2,s_3+\varepsilon_3)
    -\underline{Q}^{\set,C^*}(s_1+\varepsilon_1,s_2+\varepsilon_2,u_3)\\
  &\quad +\underline{Q}^{\set,C^*}(u_1,u_2,s_3+\varepsilon_3)
    +\underline{Q}^{\set,C^*}(u_1,s_2+\varepsilon_2,u_3)\\
  &\quad +\underline{Q}^{\set,C^*}(s_1+\varepsilon_1,u_2,u_3)
    -\underline{Q}^{\set,C^*}(\bu).
\end{align*}

Analyzing the summands we see that:
\begin{itemize}
\item $\underline{Q}^{\set,C^*}(\mathbf{s}+\pmb{\varepsilon}) = 
      C^*(\mathbf{s}+\pmb{\varepsilon})$ because 
      $(\mathbf{s}+\pmb{\varepsilon})\in \set$.
\item The expression $\max_{\bx\in \set} \big\{C^*(\mathbf{x})-\sum_{i=1}^3 
      (x_i-v_i)^+\big\}$ where $\bv=(u_1,s_2+\varepsilon_2,s_3+\varepsilon_3)$ 
      attains its maximum either at $\bx=\bs$ or at 
      $\bx=\bs+\pmb{\varepsilon}$, thus equals 
      $\max\{C^*(\mathbf{s}),C^*(\mathbf{s}+\pmb{\varepsilon}) 
      -(s_1+\varepsilon_1-u_1)\}$, while \eqref{req1.4} yields that 
    $C^*(\mathbf{s}+\pmb{\varepsilon})-(s_1+\varepsilon_1-u_1)>C^*(\mathbf{s})$.
      Moreover, \eqref{req1.2} yields $C^*(\bs) \ge W_3(\bs+\pmb{\varepsilon}) 
      \ge W_3(\bv)$, since $\bu\in(\bs,\bs+\pmb{\varepsilon})$. Hence, 
      $$\underline{Q}^{\set,C^*}(u_1,s_2+\varepsilon_2,s_3+\varepsilon_3) = 
      C^*(\mathbf{s}+\pmb{\varepsilon})-(s_1+\varepsilon_1-u_1),$$ 
      while the expressions for the terms involving 
      $(s_1+\varepsilon_1,u_2,s_3+\varepsilon_3)$ and 
      $(s_1+\varepsilon_1,s_2+\varepsilon_2,u_3)$ are analogous.
\item Using the same argumentation, it follows that 
      $$\underline{Q}^{\set,C^*}(u_1,u_2,s_3+\varepsilon_3) = 
      C^*(\mathbf{s}+\pmb{\varepsilon}) - 
      \sum_{i=1,2}(s_i+\varepsilon_i-u_i),$$ 
      while the expressions for the terms involving 
      $(u_1,s_2+\varepsilon_2,u_3)$ and $(s_1+\varepsilon_1,u_2,u_3)$ are 
      analogous.
\item Moreover, $\underline{Q}^{\set,C^*}(\bu) = C^*(\bs)$, which follows from 
      \eqref{req1.2}.
\end{itemize}

Therefore, putting the pieces together and using \eqref{req1.3} we get that
\begin{align*}
V_{\underline{Q}^{\set,C^*}}([\mathbf{u},\mathbf{s}+\pmb{\varepsilon}]) 
  &= C^*(\mathbf{s}+\pmb{\varepsilon}) - 3\, C^*(\mathbf{s}+\pmb{\varepsilon}) 
    + \sum_{i=1}^3(s_i+\varepsilon_i-u_i)\\
  &\quad + 3\, C^*(\mathbf{s}+\pmb{\varepsilon})
    - 2\sum_{i=1}^3(s_i+\varepsilon_i-u_i) - C^*(\mathbf{s})\\
  &= C^*(\mathbf{s}+\pmb{\varepsilon}) - C^*(\mathbf{s}) 
    - \sum_{i=1}^3(s_i+\varepsilon_i-u_i) < 0. 
\end{align*}
Hence $\underline{Q}^{\set,C^*}$ is indeed a proper quasi-copula.
\end{proof}

The following result shows that the requirements in Theorem \ref{lowerBoundIsQuasiCopula} are minimal, in the sense that if the prescription set $\set$ is contained in a set of the form \eqref{minimalS} then the lower bound is indeed a proper quasi-copula.

\begin{corollary}
\label{lowerBoundIsQuasiCopulaCor}
Let $C^*$ be a 3-copula and $\set\subset\mathbb{I}^3$ be compact. 
If there exists a compact set $\set'\supset \set$ such that $\set'$ and $Q^*:=\underline{Q}^{\set,C^*}$ satisfy the assumptions of Theorem \ref{lowerBoundIsQuasiCopula}, then $\underline{Q}^{\set,C^*}$ is a proper quasi-copula.
\end{corollary}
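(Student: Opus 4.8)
The plan is to reduce the statement to a direct application of Theorem~\ref{lowerBoundIsQuasiCopula}. Write $Q^*:=\underline{Q}^{\set,C^*}$, which is a $3$-quasi-copula by Theorem~\ref{PrescriptionOnSubset}; note that the equality part of \eqref{inequality} gives $Q^*(\bx)=C^*(\bx)$ for all $\bx\in\set$. The key step is to show that prescribing $Q^*$ on the larger set $\set'$ returns the same lower bound, i.e.
$$\underline{Q}^{\set',Q^*}=\underline{Q}^{\set,C^*}=Q^*.$$
Granting this, the hypothesis says precisely that $\set'$ has the product form \eqref{minimalS} and that $Q^*$ satisfies \eqref{req1.1}--\eqref{req1.2} relative to $\set'$; since Theorem~\ref{lowerBoundIsQuasiCopula} explicitly admits a $3$-quasi-copula in the role of $C^*$, it applies with $\set'$ and $Q^*$ and yields that $\underline{Q}^{\set',Q^*}$ is a proper quasi-copula. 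By the identity above, so is $\underline{Q}^{\set,C^*}$, which is the claim.

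For the identity I would use that, by Theorem~\ref{PrescriptionOnSubset}, for any compact set $T$ and quasi-copula $R$ the bound $\underline{Q}^{T,R}$ is a quasi-copula lying in $\mathcal Q^{T,R}$ and, by \eqref{inequality}, is a pointwise lower bound for every member of $\mathcal Q^{T,R}$; hence $\underline{Q}^{T,R}(\bu)=\min\{Q(\bu):Q\in\mathcal Q^{T,R}\}$ (and the relevant sets are non-empty, since $R\in\mathcal Q^{T,R}$ always). Applying this with $T=\set'$, $R=Q^*$: since $Q^*$ trivially agrees with itself on $\set'$ we have $Q^*\in\mathcal Q^{\set',Q^*}$, so $\underline{Q}^{\set',Q^*}(\bu)\le Q^*(\bu)$ for all $\bu$. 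Conversely, if $Q\in\mathcal Q^{\set',Q^*}$ then, because $\set\subseteq\set'$ and $Q^*=C^*$ on $\set$, we get $Q(\bx)=Q^*(\bx)=C^*(\bx)$ for all $\bx\in\set$, i.e. $Q\in\mathcal Q^{\set,C^*}$; thus $\mathcal Q^{\set',Q^*}\subseteq\mathcal Q^{\set,C^*}$, and since the minimum over a subset is at least the minimum over the whole set, $\underline{Q}^{\set',Q^*}(\bu)\ge\underline{Q}^{\set,C^*}(\bu)=Q^*(\bu)$. The two inequalities together give the identity.

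The only genuinely delicate point is the inclusion $\mathcal Q^{\set',Q^*}\subseteq\mathcal Q^{\set,C^*}$, which hinges on the equality part of \eqref{inequality}: the improved lower bound reproduces the prescribed copula \emph{exactly} on $\set$, so requiring agreement with $Q^*$ on $\set'\supseteq\set$ forces agreement with $C^*$ on $\set$. Everything else is bookkeeping with the best-possibility of the bounds from Theorem~\ref{PrescriptionOnSubset}. (If one prefers, the identity can also be verified directly from the explicit formula \eqref{bounds}: the estimate $\underline{Q}^{\set',Q^*}(\bu)\le Q^*(\bu)$ follows from the Lipschitz-plus-monotonicity inequality $Q^*(\bx)-Q^*(\bu)\le\sum_{i=1}^3(x_i-u_i)^+$ used at the start of the proof of Theorem~\ref{PrescriptionOnSubset} together with $Q^*(\bu)\ge W_3(\bu)$, and $\underline{Q}^{\set',Q^*}(\bu)\ge Q^*(\bu)$ follows from the observation that whichever of $0$, $\sum_{i=1}^3 u_i-2$, or $C^*(\by)-\sum_{i=1}^3(y_i-u_i)^+$ with $\by\in\set$ realises the maximum defining $Q^*(\bu)$ also occurs among the terms of the maximum defining $\underline{Q}^{\set',Q^*}(\bu)$, using once more that $Q^*=C^*$ on $\set\subseteq\set'$.)
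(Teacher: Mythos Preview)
Your proof is correct and follows essentially the same route as the paper: both establish the identity $\underline{Q}^{\set',Q^*}=\underline{Q}^{\set,C^*}$ via the pointwise-minimum characterization $\underline{Q}^{T,R}(\bu)=\min\{Q(\bu):Q\in\mathcal Q^{T,R}\}$, then invoke Theorem~\ref{lowerBoundIsQuasiCopula} with $\set'$ and $Q^*$. Your two inequalities (using $Q^*\in\mathcal Q^{\set',Q^*}$ for one direction and the inclusion $\mathcal Q^{\set',Q^*}\subseteq\mathcal Q^{\set,C^*}$ for the other) are just a rephrasing of the paper's argument, which instead checks that each of $\underline{Q}^{\set',Q^*}$ and $\underline{Q}^{\set,C^*}$ belongs to the other's constraint set; the alternative verification from the explicit formula \eqref{bounds} that you sketch at the end is a nice addition not present in the paper.
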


\begin{proof}
Since $Q^*$ and $\set'$ fulfill the requirements of Theorem \ref{lowerBoundIsQuasiCopula}, it follows that $\underline{Q}^{\set',Q^*}$ is a proper quasi-copula.
Now, in order to prove that $\underline{Q}^{\set,C^*}$ is also a proper quasi-copula we will show that $\underline{Q}^{\set',Q^*}=\underline{Q}^{\set,C^*}$. 
Note first that $\underline{Q}^{\set,C^*}$ is the pointwise lower bound of the set $\mathcal{Q}^{\set,C^*}$, i.e.
\begin{align*}
\underline{Q}^{\set,C^*}(\bu) 
  &= \min\big\{Q(\bu)\colon Q\in \mathcal{Q}^{\set,C^*}\big\} 
  = \min\big\{Q(\bu)\colon Q\in\mathcal{Q}^3, Q(\bx) = C^*(\bx) 
    \text{ for all }\bx\in\set\big\}
\end{align*}
for all $\bu\in\mathbb{I}^3$. 
Analogously, $\underline{Q}^{\set',Q^*}$ is the pointwise lower bound of $\mathcal{Q}^{\set',Q^*}$. 
Using the properties of the bounds and the fact that $\set\subset\set'$, it follows that $\underline{Q}^{\set',Q^*}(\bx) = Q^*(\bx) = \underline{Q}^{\set,C^*}(\bx) = C^*(\bx)$ for all $\bx\in\set$. 
Hence, $\underline{Q}^{\set',Q^*}\in\mathcal{Q}^{\set,C^*}$, therefore it holds that $\underline{Q}^{\set,C^*}(\bu)\leq\underline{Q}^{\set',Q^*}(\bu)$ for all 
$\bu\in\mathbb{I}^3$. 
For the reverse inequality, note that for all $\bx\in\set'$ it follows from the definition of $Q^*$ that $\underline{Q}^{\set,C^*}(\bx) = Q^*(\bx)$, hence 
$\underline{Q}^{\set,C^*}\in\mathcal{Q}^{\set',Q^*}$ such that $\underline{Q}^{\set,C^*}(\bu)\geq\underline{Q}^{\set',Q^*}(\bu)$ for all $\bu\in\mathbb{I}^3$.
Therefore, $\underline{Q}^{\set,C^*}=\underline{Q}^{\set',Q^*}$ and $\underline{Q}^{\set,C^*}$ is indeed a proper quasi-copula.
\end{proof}

The next example illustrates Corollary \ref{lowerBoundIsQuasiCopulaCor} in the case where $\set$ is a singleton.

\begin{example}
Let $d=3$, $C^*$ be the independence copula, i.e. $C^*(u_1,u_2,u_3) = u_1u_2u_3$, and $\set = \{\frac12\}^3$. 
Then, the bound $\underline{Q}^{\set,C^*}$ is a proper quasi-copula since its volume is negative, for example 
$V_{\underline{Q}^{\set,C^*}}\big(\big[\frac{5}{10}-\frac{1}{20},\frac{5}{10}\big]^3\big)=-\frac{1}{40}<0$. 
However Theorem \ref{lowerBoundIsQuasiCopula} 
does not apply since $\set$ is not of the form \eqref{minimalS}. 
Nevertheless, using Corollary \ref{lowerBoundIsQuasiCopulaCor}, we can embed $\set$ in a compact set $\set'$ such that $\set'$ and $Q^*:=\underline{Q}^{\set,C^*}$fulfill the conditions of Theorem \ref{lowerBoundIsQuasiCopula}. 
To this end let 
$\set' = \big([0,1]\setminus(s,s+\varepsilon)\big)^3 = \big([0,1]\setminus(\frac{4}{10},\frac{5}{10})\big)^3$, then it follows
\begin{align*}
&\sum_{i=1}^3 \varepsilon = \frac{3}{10} 
  > Q^*\left(\frac{5}{10},\frac{5}{10},\frac{5}{10}\right)
    - Q^*\left(\frac{4}{10},\frac{4}{10},\frac{4}{10}\right) 
  = \left(\frac{5}{10}\right)^3 > 0 \\
\text{and } 
&Q^*\left(\frac{4}{10},\frac{4}{10},\frac{4}{10}\right) 
  = 0 \geq W_3\left(\frac{5}{10},\frac{5}{10},\frac{5}{10}\right) = 0.
\end{align*}
Hence, $Q^*$ and $\set'$ fulfill conditions \eqref{req1.1} and \eqref{req1.2} of Theorem \ref{lowerBoundIsQuasiCopula}, thus it follows from Corollary \ref{lowerBoundIsQuasiCopulaCor} that $\underline{Q}^{\set,C^*}$ is a proper quasi-copula.
\end{example}

\begin{remark}
Analogously to Theorem \ref{lowerBoundIsQuasiCopula} and Corollary \ref{lowerBoundIsQuasiCopulaCor} one obtains that the upper bound $\overline{Q}^{\set,C^*}$ is a proper quasi-copula if the set $\set$ is of the form \eqref{minimalS} and the copula $C^*$ satisfies
\begin{align*}
& \sum_{i=1}^3 
  \varepsilon_i>C^*(\mathbf{s}+\pmb{\varepsilon})-C^*(\mathbf{s})>0 
  \quad\text{and}\quad 
C^*(\mathbf{s}+\pmb{\varepsilon})\leq M_3(\mathbf{s}),
\end{align*}
or if $\set$ is contained in a compact set $\set'$ for which the above hold. 
The respective details and proofs are provided in Appendix \ref{AppB}.
\end{remark}

\section{Stochastic dominance for quasi-copulas}

The aim of this section is to establish a link between the upper and lower orthant order on the set of quasi-copulas and expectations of the associated random variables. 
Let $\mathbf{S} = (S_1,\dots,S_d)$ be an $\mathbb{R}_+^d$-valued random vector with joint distribution ${F}$ and marginals $F_1,\dots,F_d$. 
Using Sklar's theorem, there exists a $d$-copula $C$ such that ${F}(x_1,\dots,x_d) = {C}(F_1(x_1),\dots,F_d(x_d))$ for all $(x_1,\dots,x_d)\in\mathbb{R}_+^d$. Consider a function $f\colon\mathbb{R}_+^d\to\mathbb{R}$; we are interested in the expectation $\mathbb{E}[f(\mathbf{S})]$, in particular in its monotonicity properties with respect to the lower and upper orthant order on the set of quasi-copulas. 
Assuming that the marginals are given, the expectation becomes a function of the copula $C$ and the \textit{expectation operator} is defined via
\begin{align}\label{pi}
\pi_f(C) 
  := \mathbb{E}[f(\mathbf{S})] 
  &= \int_{\mathbb{R}^d} f(x_1,\dots,x_d)\ 
      \ud C(F_1(x_1),\dots,F_d(x_d)) \nonumber\\
  &= \int_{\mathbb{I}^d} f(F_1^{-1}(u_1,),\dots,F_d^{-1}(u_d))\ 
      \ud C(u_1,\dots,u_d).
\end{align}
This definition however no longer applies when $C$ is merely a quasi-copula since the integral in \eqref{pi}, and in particular the term $\ud C$, are no longer well defined. 
This is due to the fact that a quasi-copula $C$ does not necessarily induce a (signed) measure $\ud C$ to integrate against. 
Therefore, we will establish a multivariate integration-by-parts formula which allows for an alternative representation of $\pi_f(C)$ that is suitable for quasi-copulas. 
Similar representations were obtained by \citet{rueschendorf2} for $\Delta$-monotonic functions $f$ fulfilling certain boundary conditions, and by \citet{tankov} for general $\Delta$-monotonic functions $f\colon\mathbb{R}_+^2\to\mathbb{R}$. 
In addition, we will establish properties of the function $f$ such that the extended map $\mathcal{Q}^d\ni Q\mapsto \pi_f(Q)$ is monotonic with respect to the lower and upper orthant order on the set of quasi-copulas.

\citet{rueschendorf2} and \citet{mueller} showed that for $f$ being $\Delta$-antitonic, resp. $\Delta$-mo\-notonic, the map $\mathcal{C}^d\ni C\mapsto \pi_f(C)$ is increasing with respect to the lower, resp. upper, orthant order on the set of copulas. 
$\Delta$-antitonic and $\Delta$-monotonic functions are defined as follows.

\begin{definition}
A function $f\colon\mathbb{R}_+^d\to\mathbb{R}$ is called $\Delta$-\textit{antitonic} if for every subset $\{i_1,\dots,i_n\}\subseteq\{1,\dots,d\}$ with $n\geq 2$ and every hypercube $\times_{j=1}^n[a_j,b_j]\subset\mathbb{R}_+^n$ with $a_j<b_j$ for $j=1,\dots,n$ it holds that 
\[
(-1)^{n}\Delta_{a_1,b_1}^{i_1}\circ\cdots\circ\Delta_{a_n,b_n}^{i_n}\ f(\bx) \geq 0, \quad\text{for all } \bx\in\mathbb{R}_+^d. 
\]
Analogously, $f$ is called $\Delta$-\textit{monotonic} if for every subset $\{i_1,\dots,i_n\}\subseteq\{1,\dots,d\}$ with $n\geq 2$ and every hypercube $\times_{j=1}^n[a_j,b_j]\subset\mathbb{R}_+^n$ with $a_j<b_j$ for $j=1,\dots,n$ it holds
$$\Delta_{a_1,b_1}^{i_1}\circ\cdots\circ\Delta_{a_n,b_n}^{i_n}\ f(\bx)\geq 0, \quad\text{for all } \bx\in\mathbb{R}_+^d.$$
\end{definition}

\begin{remark}
If $f$ is $\Delta$-monotonic then it also $d$-increasing, while if $-f$ is $\Delta$-monotonic then it is also $d$-decreasing.
\end{remark}

As a consequence of Theorem 3.3.15 in \citep*{mueller} we have that for $\underline{C},\overline{C}\in\mathcal{C}^d$ with $\underline{C}\preceq_{LO}\overline{C}$ it follows that $\pi_f(\underline{C})\leq\pi_f(\overline{C})$ for all bounded $\Delta$-antitonic functions $f$. Moreover if $\underline{C}\preceq_{UO}\overline{C}$ it follows that $\pi_f(\underline{C})\leq\pi_f(\overline{C})$ for all bounded $\Delta$-monotonic functions $f$. 

In order to formulate analogous results for the case when $\underline{C},\overline{C}$ are quasi-copulas, let us recall that a function $f:\R^d_+\to\R$ is called \textit{measure inducing} if its volume $V_f$ induces a measure on the Borel $\sigma$-algebra of $\R^d_+$.
Each (componentwise) right-continuous  $\Delta$-monotonic or $\Delta$-antitonic function $f\colon\mathbb{R}_+^d\to\mathbb{R}$ induces a signed measure on the Borel $\sigma$-Algebra of $\mathbb{R}_+^d$, which we denote by $\mu_f$; see Lemma 3.5 and Theorem 3.6 in \citet{gaffke}. 
In particular, it holds that
\begin{align}
\label{inducedMeasure}
\mu_f((a_1,b_1]\times\cdots\times(a_d,b_d])
  = V_f((a_1,b_1]\times\cdots\times(a_d,b_d]),
\end{align}
for every hypercube $(a_1,b_1]\times\cdots\times(a_d,b_d]\subset\mathbb{R}_+^d$. 

Next, we define for a subset $I=\{i_1,\dots,i_n\}\subset\{1,\dots,d\}$ the \textit{$I$-margin} of $f$ via 
$$f_I\colon\mathbb{R}_+^{n}\ni (x_{i_1},\dots,x_{i_n})\mapsto 
  f(x_1,\dots,x_d)\text{ with } x_k = 0\text{ for all } k\notin I,$$
and the associated \textit{$I$-marginal measure} by
$$\mu_{f_I} ((a_{i_1},b_{i_1}]\times\cdots\times(a_{i_n},b_{i_n}]) = 
  V_{f_I}((a_{i_1},b_{i_1}]\times\cdots\times(a_{i_n},b_{i_n}]).$$
Note that if $I=\{1,\dots,d\}$ then $\mu_{f_I}$ equals $\mu_f$, while if $I\subset\{1,\dots,d\}$ then $\mu_{f_I}$ can be viewed as a marginal measure 
of $\mu_f$. 
Now, we define iteratively
\begin{alignat}{2}\label{def:phi}
\text{for } |I|=1: \quad \varphi^I_f(C) &:= && \int_{\mathbb{R}_+} f_{\{i_1\}}(x_{i_1})\ \ud F_{i_1}(x_{i_1}); \nonumber \\ \nonumber
\text{for } |I|=2: \quad 
\varphi^I_f(C) &:= &&-f(0,0)+\varphi^{\{i_1\}}_f(C)+\varphi^{\{i_2\}}_f(C) \nonumber\\
& && +\int_{\mathbb{R}_+^2} \widehat{C_I}(F_{i_1}(x_{i_1}),F_{i_2}(x_{i_2}))\ 	
  \ud\mu_{f_I}(x_{i_1},x_{i_2}); \\ \nonumber
\text{for } |I| =n>2: \quad 
\varphi^I_f(C) &:= && \int_{\mathbb{R}_+^{|I|}} 
  \widehat{C_I}(F_{i_1}(x_{i_1}),\dots,F_{i_n}(x_{i_n}))\ \ud\mu_{f_I}
  (x_{i_1},\dots,x_{i_n})\\\nonumber
& && + \sum_{\substack{J\subset I, J\neq\emptyset}} (-1)^{n+1-|J|} \varphi^J_f(C),
\end{alignat}
where $ \widehat{C_I}$ denotes the survival function of the $I$-margin of $C$. 
The following Proposition shows that $\varphi_f^{\{1,\dots,d\}}$ is an alternative representation of the map $\pi_f$, in the sense that $\pi_f(C) =  \varphi_f^{\{1,\dots,d\}}(C)$ for all copulas $C$.

\begin{proposition}
\label{partialIntegration}
Let $f\colon\mathbb{R}_+^d\to\mathbb{R}$ be measure inducing and $C$ be a $d$-copula. 
Then 
$\pi_f(C) =  \varphi_f^{\{1,\dots,d\}}(C).$
\begin{proof}
Assume first that $f(x_1,\dots,x_d) =  V_f((0,x_1]\times\cdots\times(0,x_d])$ for all $(x_1,\dots,x_d)\in\mathbb{R}_+^d$. 
An application of Fubini's Theorem yields directly that
\begin{align}\label{fubini}
\pi_f(C) &= \int_{\mathbb{R}^d_+} f(x_1,\dots,x_d)\ 
  \ud C(F_1(x_1),\dots,F_d(x_d)) \nonumber \\
& = \int_{\mathbb{R}^d_+}  V_f((0,x_1]\times\cdots\times(0,x_d])\ 
  \ud C(F_1(x_1),\dots,F_d(x_d)) \nonumber \\
& =  \int_{\mathbb{R}^d_+}  \mu_f((0,x_1]\times\cdots\times(0,x_d])\ 
  \ud C(F_1(x_1),\dots,F_d(x_d)) \nonumber \\
& = \int_{\mathbb{R}^d_+}\bigg(\int_{\mathbb{R}^d_+} 
  \b1_{x_1'<x_1}\cdots\b1_{x_d'<x_d} 
  \ud\mu_f(x_1',\dots,x_d')\bigg)\ 
  \ud C(F_1(x_1),\dots,F_d(x_d)) \nonumber \\
& = \int_{\mathbb{R}^d_+}\bigg(\int_{\mathbb{R}^d_+} 
  \b1_{x_1'>x_1}\cdots\b1_{x_d'>x_d}\ud C(F_1(x_1'),\dots,
  F_d(x_d'))\bigg)\ \ud\mu_f(x_1,\dots,x_d) \nonumber \\
& = \int_{\mathbb{R}^d_+} \widehat{C}(F_1(x_1),\dots,F_d(x_d))\ 
  \ud\mu_f(x_1,\dots,x_d),
\end{align}
where the last equality follows from \eqref{survivalCopulaProbability}. 
Next, we drop the assumption $f(x_1,\dots,x_d)=V_f((0,x_1]\times\cdots\times(0,x_d])$ and show that the general statement holds by induction over the dimension $d$. 
By Proposition 2 in \citep*{tankov} we know that the statement is valid for $d=2$. 
Now, assume it holds true for $d=n-1$, then for $d=n$ we have that 
$$f(x_1,\dots,x_n) = V_f((0,x_1]\times\cdots\times(0,x_n]) - 
  [V_f((0,x_1]\times\cdots\times(0,x_n])-f(x_1,\dots,x_n)].$$
Noting that $V_f((0,x_1]\times\cdots\times(0,x_n])-f(x_1,\dots,x_n)$ is a sum of functions each with domain $\mathbb{R}^k_+$ with $k\leq n-1$, it follows
\begin{alignat*}{2}
\pi_f(C) &= && \int f(x_1,\dots,x_n)\ \ud C(F_1(x_1),\dots,F_n(x_n)) \\
& = && \int V_f((0,x_1]\times\cdots\times(0,x_n])\ \ud 
C(F_1(x_1),\dots,F_n(x_n))\\
& &&- \int [V_f((0,x_1]\times\cdots\times(0,x_n])-f(x_1,\dots,x_n)]\ 
\ud C(F_1(x_1),\dots,F_n(x_n))\\
&= &&\int \widehat{C}(F_1(x_1),\dots,F_n(x_n))\ \ud\mu_f(x_1,\dots,x_n)\\
& && + \int [-V_f((0,x_1]\times\cdots\times(0,x_n])+f(x_1,\dots,x_n)]\ 
\ud C(F_1(x_1),\dots,F_n(x_n))\\
&= &&\int \widehat{C}(F_1(x_1),\dots,F_n(x_n))\ \ud\mu_f(x_1,\dots,x_n) + 
\sum_{\substack{J\subset \{1,\dots,n\}\\J\neq\emptyset}} (-1)^{n+1-|J|} \varphi^J_f(C),
\end{alignat*}
where we have applied equation \eqref{fubini} to $\int V_f((0,x_1]\times\cdots\times(0,x_n]) \ \ud C(F_1(x_1),\dots,F_n(x_n))$ to obtain the third equality, and used the induction hypothesis for the last equality as for each $J\subset \{1,\dots,n\}$ the domain of $f_J$ is $\mathbb{R}^{|J|}$ with $|J|\leq n-1$. 
\end{proof}
\end{proposition}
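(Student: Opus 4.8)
The plan is to prove the identity first for a distinguished subclass of functions and then to pass to the general case by an induction on the dimension $d$.

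First I would treat functions $f$ satisfying $f(x_1,\dots,x_d)=V_f\big((0,x_1]\times\cdots\times(0,x_d]\big)$ for all $\mathbf{x}\in\mathbb{R}_+^d$, i.e. the case in which every proper margin of $f$ vanishes. For such $f$ the volume extends, by the measure-inducing hypothesis, to a signed measure $\mu_f$ on the Borel sets of $\mathbb{R}_+^d$, so that $f(\mathbf{x})=\mu_f\big((0,x_1]\times\cdots\times(0,x_d]\big)=\int_{\mathbb{R}_+^d}\b1_{x_1'\le x_1}\cdots\b1_{x_d'\le x_d}\,\ud\mu_f(\mathbf{x}')$. Substituting this into $\pi_f(C)=\int f\,\ud C(F_1(\cdot),\dots,F_d(\cdot))$ and interchanging the two integrations by Fubini's theorem, the inner integral turns into $\int\b1_{x_1'>x_1}\cdots\b1_{x_d'>x_d}\,\ud C(F_1(x_1),\dots,F_d(x_d))=\mathbb{P}(S_1>x_1',\dots,S_d>x_d')$, which equals $\widehat{C}(F_1(x_1'),\dots,F_d(x_d'))$ by \eqref{survivalCopulaProbability}. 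Hence $\pi_f(C)=\int_{\mathbb{R}_+^d}\widehat{C}(F_1(x_1),\dots,F_d(x_d))\,\ud\mu_f(\mathbf{x})$, which is precisely $\varphi_f^{\{1,\dots,d\}}(C)$ here, because the lower-dimensional correction terms in \eqref{def:phi} all vanish once the proper margins of $f$ do.

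For general measure-inducing $f$ I would use the splitting $f(\mathbf{x})=V_f\big((0,x_1]\times\cdots\times(0,x_d]\big)-\big[V_f\big((0,x_1]\times\cdots\times(0,x_d]\big)-f(\mathbf{x})\big]$, and expand the volume by inclusion--exclusion, $V_f\big((0,x_1]\times\cdots\times(0,x_d]\big)=\sum_{K\subseteq\{1,\dots,d\}}(-1)^{d-|K|}f_K(\mathbf{x}_K)$, whose term with $K=\{1,\dots,d\}$ is $f$ itself. Thus the bracketed remainder is a finite signed combination of the proper margins $f_K$, $\emptyset\neq K\subset\{1,\dots,d\}$, together with the constant $f(0,\dots,0)$, each $f_K$ depending on at most $d-1$ variables. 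Integrating against $\ud C(F_1(\cdot),\dots,F_d(\cdot))$, the volume term is handled by the first step (applied to the measure-inducing map $\mathbf{x}\mapsto V_f\big((0,x_1]\times\cdots\times(0,x_d]\big)$, whose induced measure is again $\mu_f$), while each contribution $\int f_K(\mathbf{x}_K)\,\ud C(F_1(x_1),\dots,F_d(x_d))$ reduces --- the integrand depending only on the coordinates in $K$ and $C$ having uniform margins --- to $\pi_{f_K}(C_K)$, with $C_K$ the $K$-margin of $C$.

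Finally I would close the argument by induction on $d$, with the base case $d=2$ supplied by Proposition~2 in \citet{tankov}. Assuming the identity through dimension $d-1$, we have $\pi_{f_K}(C_K)=\varphi_{f_K}^{K}(C_K)$ for every proper $K$, and since $(C_K)_J=C_J$ and $\mu_{(f_K)_J}=\mu_{f_J}$ for all $J\subseteq K$ this equals $\varphi_f^{K}(C)$; substituting back and collecting the inclusion--exclusion signs reproduces exactly the correction terms of \eqref{def:phi}, so that together with the first step one obtains $\pi_f(C)=\varphi_f^{\{1,\dots,d\}}(C)$. The step I expect to require the most care is this last bookkeeping --- making the lower-dimensional pieces $\pi_{f_K}(C_K)$ slot precisely into the recursively defined correction terms with the correct signs --- while the one genuinely analytic point is the Fubini interchange in the first step, where the measure-inducing hypothesis enters to guarantee that $\mu_f$ (through its Jordan decomposition) and $\ud C(F_1(\cdot),\dots,F_d(\cdot))$ satisfy the hypotheses of the Fubini--Tonelli theorem on $\mathbb{R}_+^d\times\mathbb{R}_+^d$.
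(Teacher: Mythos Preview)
Your proposal is correct and follows essentially the same route as the paper: first handle the special case $f=V_f((0,\cdot])$ via Fubini to get $\pi_f(C)=\int\widehat{C}(F_1,\dots,F_d)\,\ud\mu_f$, then use the splitting $f=V_f-(V_f-f)$ together with induction on $d$ (base case from \citet{tankov}) to reduce the general case to lower-dimensional margins. Your exposition is in fact slightly more explicit than the paper's about why the correction terms vanish in the special case and about the inclusion--exclusion bookkeeping matching \eqref{def:phi}, but the argument is the same.
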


Proposition \ref{partialIntegration} enables us to extend the notion of the expectation operator $\pi_f$ to quasi-copulas and establish monotonicity properties for the generalized mapping. 

\begin{definition}\label{def:qeo}
Let $f\colon\mathbb{R}^d_+\to\mathbb{R}$ be measure inducing. 
Then, the \textit{quasi-expectation operator} for $Q \in \mathcal{Q}^d$ is defined via
$$\pi_f(Q) :=  \int_{\mathbb{R}_+^d} \widehat{Q}(F_1(x_1),\dots,F_d(x_d)) \ 
  \ud\mu_f(x_1,\dots,x_d) + \sum_{\substack{J\subset \{1,\dots,d\}\\ J\neq\emptyset}} (-1)^{d+1-|J|} 
  \varphi^J_f(Q).$$
\end{definition}

\begin{theorem}
\label{extensionOrder}
Let $\underline{Q},\overline{Q}\in\mathcal{Q}^d$, then it holds 
\begin{align*}
&(i) \quad 
\underline{Q} \preceq_{LO} \overline{Q} \quad \Longrightarrow \quad 
  \pi_f(\underline{Q}) \leq \pi_f(\overline{Q}) 
&&\text{for all }\Delta\text{-antitonic } f\colon\mathbb{R}_+^d\to\mathbb{R}\\
& &&\text{s.t. the integrals exist;} \\
&(ii) \quad 
\underline{Q} \preceq_{UO} \overline{Q} \quad \Longrightarrow \quad 
  \pi_f(\underline{Q})\leq\pi_f(\overline{Q})
&&\text{for all }\Delta\text{-monotonic } f\colon\mathbb{R}_+^d\to\mathbb{R}\\
& &&\text{s.t. the integrals exist.}
\end{align*}
Moreover, if $F_1,\dots,F_d$ are continuous then the converse statements are also true. 
\end{theorem}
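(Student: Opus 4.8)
The plan is to reduce the statement about quasi-copulas to the analogous statement for copulas, which is already available from Theorem 3.3.15 in \citet{mueller} together with Proposition \ref{partialIntegration}. For the forward implication in $(i)$, suppose $\underline{Q}\preceq_{LO}\overline{Q}$ and let $f$ be $\Delta$-antitonic. First I would inspect the definition of $\pi_f$ in Definition \ref{def:qeo} and of the auxiliary functionals $\varphi^J_f$ in \eqref{def:phi}, and observe that each $\varphi^J_f(Q)$ depends on $Q$ only through the survival functions $\widehat{Q_J}$ of its lower-dimensional margins. The key structural fact is that $\pi_f(Q)$ can be written as a single sum, over all non-empty $J\subseteq\{1,\dots,d\}$, of terms of the form $\pm\int_{\mathbb{R}_+^{|J|}}\widehat{Q_J}(F_{i_1}(x_{i_1}),\dots)\,\ud\mu_{f_J}$ plus pure-margin terms that do not depend on the copula at all (the $\varphi^{\{i\}}_f$ and the $-f(0,\dots,0)$ constants). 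Unravelling the recursion in \eqref{def:phi}, I expect the coefficient of the $J$-term to have a definite sign determined by the $\Delta$-antitonicity of $f$ restricted to the coordinates in $J$: each $\mu_{f_J}$ is, up to the sign $(-1)^{|J|}$, a nonnegative measure, and the combinatorial signs $(-1)^{d+1-|J|}$ in Definition \ref{def:qeo} and $(-1)^{n+1-|J|}$ in \eqref{def:phi} telescope so that the net weight on $\widehat{Q_J}$ against the \emph{nonnegative} measure $(-1)^{|J|}\mu_{f_J}$ is nonnegative. Once this sign bookkeeping is in place, monotonicity is immediate: $\underline{Q}\preceq_{LO}\overline{Q}$ implies $\underline{Q}_J\preceq_{LO}\overline{Q}_J$ for every $J$ (margins preserve the lower orthant order), and a routine computation with the inclusion--exclusion formula for the survival function shows $\widehat{\underline{Q}_J}\le\widehat{\overline{Q}_J}$ pointwise when $|J|$ is handled correctly — more carefully, the correct statement is that $\underline{Q}\preceq_{LO}\overline{Q}$ forces each copula-dependent term to move in the right direction. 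Rather than prove the sign lemma from scratch, the cleaner route is: extend $\pi_f$ as in Definition \ref{def:qeo}, note it agrees with the classical $\pi_f$ on $\mathcal{C}^d$ by Proposition \ref{partialIntegration}, and observe that the extended functional is a finite linear combination, with coefficients of a fixed sign, of the maps $Q\mapsto\int\widehat{Q_J}\,\ud\big((-1)^{|J|}\mu_{f_J}\big)$; each of these is monotone in $\preceq_{LO}$ because $\underline{Q}\preceq_{LO}\overline{Q}$ gives $\widehat{\underline{Q}_J}\le\widehat{\overline{Q}_J}$ on the relevant region when the sign of the measure is accounted for, which is exactly the content of the classical copula result applied margin-by-margin.

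For part $(ii)$, the argument is the mirror image: if $\underline{Q}\preceq_{UO}\overline{Q}$ then by definition $\widehat{\underline{Q}}\le\widehat{\overline{Q}}$ pointwise, and the same holds for all margins since the $I$-margin of $\widehat{Q}$ is $\widehat{Q_I}$. For $\Delta$-monotonic $f$ every $\mu_{f_J}$ is itself a nonnegative measure, so in the sum defining $\pi_f(Q)$ each copula-dependent term is now an integral of $\widehat{Q_J}$ against a nonnegative measure with a nonnegative combinatorial weight, whence $\pi_f$ is $\preceq_{UO}$-monotone. Again I would phrase this by reducing to the known copula statement, $\underline{C}\preceq_{UO}\overline{C}\Rightarrow\pi_f(\underline{C})\le\pi_f(\overline{C})$ for bounded $\Delta$-monotonic $f$, applied to each marginal dimension.

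For the converse, assume $F_1,\dots,F_d$ are continuous, so that $F_i^{-1}$ pushes the uniform law to $F_i$ and every copula is realizable. Suppose $\underline{Q}\preceq_{LO}\overline{Q}$ fails, i.e. there is a point $\mathbf{u}^\circ\in\mathbb{I}^d$ with $\underline{Q}(\mathbf{u}^\circ)>\overline{Q}(\mathbf{u}^\circ)$. I would then exhibit a $\Delta$-antitonic $f$ with $\pi_f(\underline{Q})>\pi_f(\overline{Q})$: the natural choice is (a continuous approximation of) the indicator-type payoff $f(\mathbf{x})=-\prod_{i=1}^d\b1_{\{x_i\le F_i^{-1}(u^\circ_i)\}}$, equivalently $f(\mathbf{x})=-\b1_{(0,q_1]\times\cdots\times(0,q_d]}(\mathbf{x})$ with $q_i=F_i^{-1}(u^\circ_i)$, whose volume is a point mass and which is $\Delta$-antitonic, and for which the definition gives $\pi_f(Q)=-Q(\mathbf{u}^\circ)$ directly (all lower-dimensional $\varphi^J_f$ vanish because the corresponding margins of $f$ are identically zero off the boundary). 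Then $\pi_f(\underline{Q})=-\underline{Q}(\mathbf{u}^\circ)<-\overline{Q}(\mathbf{u}^\circ)=\pi_f(\overline{Q})$ would contradict $(i)$ in the wrong direction — more precisely it shows that if $(i)$'s conclusion holds for all such $f$ then $\underline{Q}\preceq_{LO}\overline{Q}$. For the $\preceq_{UO}$ converse one uses instead $f(\mathbf{x})=\b1_{(q_1,\infty)\times\cdots\times(q_d,\infty)}(\mathbf{x})$, which is $\Delta$-monotonic and satisfies $\pi_f(Q)=\widehat{Q}(F_1(q_1),\dots,F_d(q_d))$, so that a violation of $\widehat{\underline{Q}}\le\widehat{\overline{Q}}$ at some point produces the required counterexample.

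The main obstacle I anticipate is the sign/coefficient bookkeeping in the first paragraph: verifying that after fully unwinding the recursion \eqref{def:phi} the functional in Definition \ref{def:qeo} really is a combination of the margin-wise survival integrals with coefficients of the single sign dictated by $\Delta$-(anti)tonicity, with no stray terms of the opposite sign. The cleanest way around this is not to expand the recursion explicitly but to argue by induction on $d$ exactly as in the proof of Proposition \ref{partialIntegration}: the $d$-dimensional term $\int\widehat{Q}\,\ud\mu_f$ is monotone by the reasoning above, and the remaining sum $\sum_{J\subsetneq\{1,\dots,d\}}(-1)^{d+1-|J|}\varphi^J_f(Q)$ is handled by the induction hypothesis applied to the $|J|$-dimensional problems, after checking that $f_J$ inherits $\Delta$-antitonicity (resp. $\Delta$-monotonicity) from $f$ and that $Q_J$ inherits the orthant ordering from $Q$. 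A secondary technical point is the justification that the relevant integrals exist and that $f$ may be taken bounded (truncation plus the standing hypothesis ``s.t. the integrals exist''), and, for the converse, approximating the discontinuous indicator payoffs by bounded $\Delta$-(anti)tonic functions while preserving the strict inequality — standard but requiring a line of care.
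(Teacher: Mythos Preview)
Your overall architecture matches the paper's: establish the forward implications by exploiting the sign structure of the induced measures together with an induction on $d$ as in the proof of Proposition~\ref{partialIntegration}, and prove the converse by testing against indicator payoffs. Part~(ii) is essentially the paper's argument: for $\Delta$-monotonic $f$ the measure $\mu_f$ is nonnegative and $\preceq_{UO}$ gives $\widehat{\underline{Q}}\le\widehat{\overline{Q}}$ pointwise, so the leading integral $\int\widehat{Q}\,\ud\mu_f$ is monotone and the lower-order terms follow by induction. For the converse your idea is right but the details slip: the paper takes $f(\mathbf{x})=\mathds{1}_{x_1\le u_1,\dots,x_d\le u_d}$, which \emph{is} $\Delta$-antitonic (each $n$-fold difference has sign $(-1)^n$) and satisfies $\pi_f(Q)=Q(F_1(u_1),\dots,F_d(u_d))$. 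Your choice $-\mathds{1}_{(0,q_1]\times\cdots\times(0,q_d]}$ is not $\Delta$-antitonic (it fails already for the two-variable differences), and the $I$-margins $f_I$ of the indicator are not identically zero, so the claim that ``all lower-dimensional $\varphi^J_f$ vanish'' is also incorrect; one really has to compute $\pi_f(Q)$ from the full recursion (the paper defers this to \citet[Lemma~3.1.4]{Lux_2017}).

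There is, however, a genuine gap in your plan for the forward direction of~(i). You assert that each building block $Q\mapsto\int\widehat{Q_J}\,\ud\big((-1)^{|J|}\mu_{f_J}\big)$ is $\preceq_{LO}$-monotone ``because $\underline{Q}\preceq_{LO}\overline{Q}$ gives $\widehat{\underline{Q}_J}\le\widehat{\overline{Q}_J}$''. This implication is false as soon as $|J|\ge3$: lower orthant order does \emph{not} imply upper orthant order on $\mathcal{Q}^{|J|}$, so $\widehat{\underline{Q}_J}\le\widehat{\overline{Q}_J}$ need not hold. The same obstruction hits your proposed induction, since the leading term $\int\widehat{Q}\,\ud\mu_f$ is precisely the case $J=\{1,\dots,d\}$ and its $\preceq_{LO}$-monotonicity cannot be read off from this reasoning. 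The paper's device---the step you are missing---is not to integrate the survival functions $\widehat{Q_J}$ but to expand $\widehat{Q}$ by inclusion--exclusion into a signed sum of \emph{values of $Q$ itself}, namely terms $Q(v_1,\dots,v_d)$ with $v_i\in\{F_i(x_i),1\}$. Each such term is trivially monotone in $Q$ with respect to $\preceq_{LO}$. The paper then matches the alternating signs $(-1)^{|J|}$ from the inclusion--exclusion against the sign pattern of $\mu_f$ dictated by $\Delta$-antitonicity, so that after passing to $|\mu_f|$ every contribution to $\pi_f(\overline{Q})-\pi_f(\underline{Q})$ becomes an integral of a nonnegative function against a nonnegative measure. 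In short: decompose $\widehat{Q}$ into $Q$-values, not $\pi_f$ into $\widehat{Q_J}$-integrals.
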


\begin{proof}
We prove the statements assuming that the condition $f(x_1,\dots,x_d) = V_f((0,x_1]\times\cdots\times(0,x_d])$ holds. 
The general case follows then by induction as in the proof of Proposition \ref{partialIntegration}. 
Let $f$ be $\Delta$-antitonic and $\underline{Q}\preceq_{LO}\overline{Q}$, then it follows
\begin{align*}
\pi_f(\underline{Q}) 
&= \int_{\mathbb{R}_+^d} \widehat{\underline{Q}}(F_1(x_1),\dots,F_d(x_d))\ 
  \ud\mu_f(x_1,\dots,x_d)\\
&= \int_{\mathbb{R}_+^d} 
  V_{\underline{Q}}\big((F_1(x_1),1]\times\cdots\times(F_d(x_d),1]\big)\ 
  \ud\mu_f(x_1,\dots,x_d)\\
&= \int_{\mathbb{R}_+^d} \Big\{ \underline{Q}(1,\dots,1) 
  - \underline{Q}(F_1(x_1),1,\dots,1) - \dots  
  - \underline{Q}(1,\dots,1,F_d(x_d))\\ 
&\qquad + \underline{Q}(F_1(x_1),F_2(x_2),1,\dots,1) + \cdots 
  + \underline{Q}(1,\dots,1,F_{d-1}(x_{d-1}),F_d(x_d)) \\
&\qquad  - \cdots + (-1)^d \underline{Q}(F_1(x_1),\dots.,F_d(x_d))\Big\}\ 
  \ud\mu_f(x_1,\dots,x_d)\\
&= \int_{\mathbb{R}_+^d} \Big\{ \underline{Q}(1,\dots,1) 
  + \underline{Q}(F_1(x_1),1,\dots,1) + \dots  
  + \underline{Q}(1,\dots,1,F_d(x_d))\\ 
&\qquad + \underline{Q}(F_1(x_1),F_2(x_2),1,\dots,1) + \cdots 
  + \underline{Q}(1,\dots,1,F_{d-1}(x_{d-1}),F_d(x_d)) \\
&\qquad  + \cdots + \underline{Q}(F_1(x_1),\dots.,F_d(x_d))\Big\}\ 
  \ud|\mu_f|(x_1,\dots,x_d),
\end{align*}
where for the last equality we used that $f$ is $\Delta$-antitonic, hence $\mu_f$ has alternating signs. 
A similar representation holds for $\pi_f(\overline{Q})$, thus
\begin{align*}
&\pi_f(\overline{Q}) - \pi_f(\underline{Q}) \\
&\quad = \int_{\mathbb{R}^d} \Big\{ \big[ \overline{Q}(F_1(x_1),1,\dots,1)  - 
  \underline{Q}(F_1(x_1),1,\dots,1) \big] + \cdots \\
&\qquad\qquad + \big[ \overline{Q}(1,\dots,1,F_d(x_d) - 
  \underline{Q}(1,\dots,1, F_d(x_d))\big] \\
&\qquad\qquad + \big[ \overline{Q}(F_1(x_1),F_2(x_2),1,\dots,1)
  - \underline{Q}(F_1(x_1),F_2(x_2),1,\dots,1) \big] + \cdots\\
&\qquad + \big[ \overline{Q}(1,\dots,1,F_{d-1}(x_{d-1}),F_d(x_d)) 
  - \underline{Q}(1,\dots,1,F_{d-1}(x_{d-1}),F_d(x_d))\big] + \cdots\\
&\qquad + \big[ \overline{Q}(F_1(x_1),\dots.,F_d(x_d))
  - \underline{Q}(F_1(x_1),\dots,F_d(x_d))\Big\}\ 
  \ud|\mu_f|(x_1,\dots,x_d) \ \geq 0,
\end{align*}
since $\underline{Q}\preceq_{LO}\overline{Q}$. 
Hence assertion (i) is true. 
Regarding (ii), we have directly that
\begin{align*}
\pi_f(\overline{Q}) &- \pi_f(\underline{Q})\\
&= \int_{\mathbb{R}^d} \Big\{ \widehat{\overline{Q}}(F_1(x_1),\dots,F_d(x_d)) 
  - \widehat{\underline{Q}}(F_1(x_1),\dots,F_d(x_d)) \Big\}\ 
  \ud\mu_f(x_1,\dots,x_d) \geq 0,
\end{align*}
where we used that $f$ is $\Delta$-monotonic, hence $\mu_f$ is a positive measure, as well as  $\underline{Q}\preceq_{UO}\overline{Q}$.

As for the converse statements, assume that $F_1,\dots,F_d$ are continuous. 
If $\pi_f(\underline{Q})\leq\pi_f(\overline{Q})$ holds for all $\Delta$-antitonic $f$, then it holds in particular for functions of the form $f(x_1,\dots,x_d) = \b1_{x_1\leq u_1,\dots,x_d \leq u_d}$, for arbitrary $(u_1,\dots,u_d) \in(0,\infty]^d$. 
For such $f$ and any quasi-copula $Q$ it holds that $\pi_f(Q) = Q(F_1(u_1),...,F_d(u_d))$; cf. \citet[Lemma~3.1.4]{Lux_2017}.
Hence
\begin{align*}
\pi_f(\underline{Q}) \leq \pi_f(\overline{Q}) 
  \quad \Longrightarrow \quad 
\underline{Q}(F_1(u_1),\dots,F_d(u_d)) \leq 
  \overline{Q}(F_1(u_1),\dots,F_d(u_d)),
\end{align*}
while from the fact that $\pi_f(\underline{Q})\leq\pi_f(\overline{Q})$ holds  for all choices of $(u_1,\dots,u_d)$ and the continuity of the marginals it 
follows that (i) holds. 
Assertion (ii) follows by an analogous argument. 
Note that if $\pi_f(\underline{Q})\leq\pi_f(\overline{Q})$ holds for all $\Delta$-monotonic $f$, it holds in particular for functions of the form $f(x_1,\dots,x_d) = \b1_{x_1\geq u_1,\dots,x_d \geq u_d}$ for arbitrary $(u_1,\dots,u_d) \in (0,\infty]^d$. 
For such $f$ and any quasi-copula $Q$, it holds that $\pi_f(Q) = \widehat{Q}(F_1(u_1),...,F_d(u_d))$ and so (ii) follows as above.
\end{proof}

\begin{remark}\label{extensionOrder-E}
Consider the setting of Theorem \ref{extensionOrder} and assume that $-f$ is $\Delta$-antitonic, resp. $\Delta$-monotonic.
Then, the inequalities on the right hand side of ($i$) and ($ii$) are reversed, i.e.
\[
\underline{Q} \preceq_{LO} \overline{Q} \ \Longrightarrow \ \pi_f(\underline{Q}) \geq \pi_f(\overline{Q})
\quad \text {and} \quad 
\underline{Q} \preceq_{UO} \overline{Q} \ \Longrightarrow \ \pi_f(\underline{Q}) \geq \pi_f(\overline{Q}).
\]
\end{remark}
%
%
%
\begin{remark}
Let us point out that the class of $\Delta$-antitonic functions is the maximal generator of the lower orthant order on the set of copulas, i.e. every $f\colon\mathbb{R}_+^d\to\mathbb{R}$ such that 
$$\underline{C}\preceq_{LO} \overline{C} \quad\Longrightarrow\quad \pi_f(\underline{C}) \leq \pi_f(\overline{C})$$
is $\Delta$-antitonic; see \citep[Theorem 3.3.15]{mueller}. 
Hence, statement (i) in the theorem above cannot be further weakened. 
Conversely, the set of $\Delta$-monotonic functions is the maximal generator of the upper orthant order, thus statement (ii) in the theorem can also not be further relaxed.  
\end{remark}

Finally, we provide an integrability condition for the extended map $\pi_f(\cdot)$ based on the marginals $F_1,\dots,F_d$ and the properties of the function $f$.
In particular, the finiteness of $\pi_f(C)$ is independent of $C$ being a copula or a proper quasi-copula.

\begin{proposition}
\label{iteration}
Let $f\colon\mathbb{R}_+^d\to\mathbb{R}$ be right-continuous, $\Delta$-antitonic or $\Delta$-monotonic such that
\begin{align}
\label{requirementFiniteExpectation}
\sum_{J\subset \{1,\dots,d\}}\ \sum_{i=1}^d \Bigg\{ \int_{\mathbb{R}_+^{|J|}} 
  |f_J(x,\dots.,x)|\ \ud F_i(x)\Bigg\} < \infty.
\end{align}
Then the map $\pi_f$ is well-defined and continuous with respect to the pointwise convergence of quasi-copulas.
\end{proposition}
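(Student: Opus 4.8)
The plan is to show that each of the finitely many terms making up $\pi_f(Q)$ in Definition \ref{def:qeo} is an absolutely convergent integral admitting a dominating function that does not depend on $Q$. Well-definedness is then immediate, while continuity follows by applying the dominated convergence theorem term by term, once one observes that $Q_n\to Q$ pointwise forces $\widehat{Q_n}\to\widehat Q$ pointwise (and likewise for every margin), since each quasi-survival function is a fixed finite linear combination of evaluations of its quasi-copula. Exactly as in the proof of Proposition \ref{partialIntegration}, I would first reduce by induction over the dimension to the ``normalized'' situation $f(x_1,\dots,x_d)=V_f((0,x_1]\times\cdots\times(0,x_d])$, in which all contributions $\varphi^J_f(Q)$ with $|J|<d$ vanish and $\pi_f(Q)=\int_{\R_+^d}\widehat Q(F_1(x_1),\dots,F_d(x_d))\,\ud\mu_f(\mathbf x)$. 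For general $f$ the extra terms $\varphi^J_f(Q)$ have the same structure in $|J|\le d-1$ variables, with $f$ replaced by the normalization of $f_J$, whose diagonal is controlled via the triangle inequality by the diagonals of $f_J$ and of its lower margins, all of which appear in \eqref{requirementFiniteExpectation}; these are thus handled by the same estimate together with the induction hypothesis, the base case $|J|=1$ being the $Q$-independent constant $\varphi^{\{i\}}_f(Q)=\int_{\R_+}f_{\{i\}}(x)\,\ud F_i(x)$.

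The crucial ingredient is a uniform bound on quasi-survival functions: for every $d$-quasi-copula $Q$ and every $\mathbf u\in\I^d$,
\[
\bigl|\widehat Q(\mathbf u)\bigr|\ \le\ 2^{\,d-1}\min_{1\le j\le d}(1-u_j)\ =\ 2^{\,d-1}\,\widehat{M_d}(\mathbf u).
\]
To see this, fix a coordinate $j$ and pair the $2^d$ summands of $\widehat Q(\mathbf u)=\sum_{S\subseteq\{1,\dots,d\}}(-1)^{|S|}Q(\mathbf y^S)$, where $\mathbf y^S$ is the vector whose $i$-th coordinate equals $u_i$ if $i\in S$ and $1$ otherwise, by matching $S$ with $S\cup\{j\}$; the two arguments of $Q$ then differ only in coordinate $j$, so the Lipschitz property \ref{cond:QC3} bounds each of the $2^{d-1}$ resulting differences by $1-u_j$, and letting $j$ vary yields the claim. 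Note that $\widehat{M_d}(F_1(x_1),\dots,F_d(x_d))=1-\max_j F_j(x_j)$ is precisely the survival function of the comonotone coupling of $S_1,\dots,S_d$.

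It then remains, in the normalized case, to estimate $\int_{\R_+^d}\bigl|\widehat Q\circ(F_1,\dots,F_d)\bigr|\,\ud|\mu_f|\le 2^{\,d-1}\int_{\R_+^d}\widehat{M_d}\circ(F_1,\dots,F_d)\,\ud|\mu_f|$. A layer-cake decomposition rewrites the last integral as $\int_0^1|\mu_f|\bigl(\prod_{j}\{x\ge0:F_j(x)<s\}\bigr)\,\ud s$, and in the normalized case one has $|\mu_f|\bigl(\prod_j(0,b_j]\bigr)=|f(b_1,\dots,b_d)|$ for all $\mathbf b$ (using that $\mu_f$ has a definite sign on $d$-dimensional boxes, which holds in both the $\Delta$-monotonic and the $\Delta$-antitonic case); hence the integral is at most $\int_0^1|f(c_1(s),\dots,c_d(s))|\,\ud s$ with $c_j(s)=\sup\{x:F_j(x)<s\}$. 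Since the normalized $|f|$ is increasing, this is dominated by $\sum_{j=1}^d\int_0^1|f(c_j(s),\dots,c_j(s))|\,\ud s=\sum_{j=1}^d\int_{\R_+}|f(x,\dots,x)|\,\ud F_j(x)$, the last equality being the change of variables corresponding to the fact that $s\mapsto c_j(s)$ pushes Lebesgue measure on $(0,1)$ forward to the law of $S_j$; this is finite by \eqref{requirementFiniteExpectation}. The resulting dominating function depends only on $f$ and the marginals, so $\pi_f$ is finite on all of $\mathcal{Q}^d$, and dominated convergence applied termwise gives continuity.

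The main obstacle is that $\R_+^d$ is unbounded and $|\mu_f|$ is typically an infinite measure, so finiteness hinges entirely on the decay of $\widehat Q\circ(F_1,\dots,F_d)$ towards the boundary at infinity; the estimate on $\widehat Q$ is exactly what captures this decay, and the remaining work is the careful bookkeeping needed to match it against \eqref{requirementFiniteExpectation} — collapsing the $d$-dimensional integral against $|\mu_f|$ to one-dimensional integrals along the diagonal, and keeping track of the sign pattern of $\mu_f$ in the $\Delta$-antitonic case so that $|\mu_f|$ is described correctly.
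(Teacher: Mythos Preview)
Your argument is correct and takes a route genuinely different from the paper's. The paper first proves finiteness of $\pi_f(C)$ for \emph{copulas} $C$ by bounding $|f(x_1,\dots,x_d)|$ itself (via $|V_f((0,x_1]\times\cdots\times(0,x_d])|\le\sum_i|V_f((0,x_i]^d)|$ and then the triangle inequality) and integrating against the probability measure $\ud C(F_1,\dots,F_d)$; only afterwards does it pass to quasi-copulas by invoking the orthant-order monotonicity of Theorem~\ref{extensionOrder} together with $Q\preceq_{LO} M_d$. You instead work directly on the quasi-copula side: the Lipschitz estimate $|\widehat Q(\mathbf u)|\le 2^{d-1}\widehat{M_d}(\mathbf u)$ furnishes a single $Q$-independent majorant for the integrand in the representation of Definition~\ref{def:qeo}, and the layer-cake/quantile manipulation collapses $\int\widehat{M_d}(F_1,\dots,F_d)\,\ud|\mu_f|$ to the diagonal integrals in the hypothesis.

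What this buys you: you obtain well-definedness and an explicit dominating function simultaneously and uniformly over $\mathcal Q^d$, so dominated convergence is immediate and does not rely on Theorem~\ref{extensionOrder} (whose hypothesis ``such that the integrals exist'' makes the paper's extension step somewhat delicate, and whose conclusion $0\le\pi_f(Q)$ is not obviously justified for proper quasi-copulas since $\widehat Q$ can take negative values). The paper's approach, on the other hand, has the conceptual appeal of first treating the classical case and then leveraging the order structure. Both ultimately rest on the same integrability hypothesis on the diagonals of the $I$-margins $f_J$; your triangle-inequality control of $|V_{f_J}((0,x]^{|J|})|$ by $\sum_{J'\subseteq J}|f_{J'}(x,\dots,x)|$ is exactly what is needed to close the induction.
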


\begin{proof}
First, we show that for $C\in\mathcal{C}^d$ the expectation $\int f(x_1,\dots,x_d)\ \ud C(F_1(x_1),\dots,F_d(x_d))$ is finite by induction over the dimension $d$. 
By Proposition 2 in \citep*{tankov} we know that the statement is true for $d=2$. 
Assume that the statement holds for $d=n-1$, then for $d=n$ we have
\begin{align}
\label{requirementFiniteExpectationInequality}
|f(x_1,&\dots,x_n)| \nonumber \\ \nonumber
&= |V_f((0,x_1]\times\cdots\times(0,x_n]) - 
  (V_f((0,x_1]\times\cdots\times(0,x_n]) - f(x_1,\dots,x_n)| \\ \nonumber
&\leq |V_f((0,x_1]\times\cdots\times(0,x_n])| 
  + |V_f((0,x_1]\times\cdots\times(0,x_n]) - f(x_1,\dots,x_n)| \\ \nonumber
&\leq |V_f((0,x_1]^n)| + \cdots + |V_f((0,x_n]^n)| 
  + |V_f((0,x_1]\times\cdots\times(0,x_n] ) - f(x_1,\dots,x_n)| \\ \nonumber
&\leq \sum_{i=1}^{n} \sum_{J\subset \{1,\dots,n\}} |f_J(x_i,\dots,x_i)| 
  + |V_f((0,x_1]\times\cdots\times(0,x_n])-f(x_1,\dots,x_n)| \\
&\leq \sum_{i=1}^{n} \sum_{J\subset \{1,\dots,n\}} |f_J(x_i,\dots,x_i)| 
  + \text{const } \cdot \sum_{J\subset \{1,\dots,n\}}   
|f_J(x_1,\dots,x_n)|,
\end{align}
where the second inequality follows from the definition of $V_f$ and $\times_{i=1}^n(0,x_i]\subseteq \bigcup_{i=1}^n (0,x_i]^n$. 
Now, note that for $J\subset \{1,\dots,n\}$ $f$ is a function with domain $\mathbb{R}_+^{|J|}$ where $|J|<n$, hence by the induction hypothesis and \eqref{requirementFiniteExpectation} we get that 
$$\int_{\mathbb{R}_+^{|J|}} |f_J(x_1,\dots.,x_n)|\ 
	\ud C_J(F_1(x_1),\dots,F_n(x_n))<\infty$$
for each $J\subset\{1,\dots,n\}$, where $|J|\leq n-1$. 
Hence
$$b := \text{const }\cdot \sum_{J\subset 
  \{1,\dots,n\}}\Bigg\{\int_{\mathbb{R}_+^{|J|}} |f_J(x_1,\dots.,x_n)|\ 
  \ud C_J(F_1(x_1),\dots,F_n(x_n))\Bigg\} < \infty.$$
Finally, from \eqref{requirementFiniteExpectation} and \eqref{requirementFiniteExpectationInequality} we obtain
\begin{align*}
\int_{\mathbb{R}_+^n} |f(x_1,\dots,x_n)|\ & \ud C(F_1(x_1),\dots,F_n(x_n)) \\
&\leq \sum_{J\subset \{1,\dots,n\}} \ \sum_{i=1}^n 
  \Bigg\{\int_{\mathbb{R}_+^{|J|}} |f_J(x,\dots.,x)|\ \ud F_i(x)\Bigg\} + b
<\infty.
\end{align*}
Hence the assertion is true for $\mathcal{C}^d\ni C\mapsto\pi_f(C)$. 
Now for the extended map, let $Q$ be a proper quasi-copula and assume that $f$ is $\Delta$-antitonic. 
Then it follows from Theorem \ref{extensionOrder} and the properties of the upper \FH bound that $0\leq\pi_f(Q)\leq\pi_f(M_d)<\infty$, where the finiteness of $\pi_f(M_d)$ follows from the fact that $M_d\in\mathcal{C}^d$. 
By the same token, since all quasi-copulas are bounded from above by the upper \FH bound $M_d$ and the integrals with respect to $M_d$ exist, the dominated convergence theorem yields that $\pi_f$ is continuous with respect to the pointwise convergence of quasi-copulas. 
The well-definedness of $\pi_f$  for $\Delta$-monotonic $f$ follows analogously. 
\end{proof}

\section{Applications in model-free finance}

A direct application of our results is the computation of bounds on the prices of multi-asset options assuming that the marginal distributions of the assets are fully known while the dependence structure between them is only partially known. 
This situation is referred to in the literature as \textit{dependence uncertainty} and the resulting bounds as \textit{model-free bounds} for the option prices.
The literature on model-free bounds for multi-asset option prices focuses almost exclusively on basket options, see e.g. \citet{Hobson_Laurence_Wang_2005_1,Hobson_Laurence_Wang_2005_2}, \citet{dAspremont_ElGhaoui_2006}, \citet{Chen_Deelstra_Dhaene_Vanmaele_2008} and \citet{Pena_Vera_Zuluaga_2010}, while \citet{tankov} considers general payoff functions in a two-dimensional setting.
See also \citet{Dhaene_etal_2002_a,Dhaene_etal_2002_b} for applications of model-free bounds in actuarial science.

We consider European-style options whose payoff depends on a positive random vector $\mathbf{S} = (S_1,\dots,S_d)$. 
The constituents of $\mathbf{S}$ represent the values of the option's underlyings at the time of maturity. 
In the absence of arbitrage opportunities, the existence of a risk-neutral probability measure $\mathbb{Q}$ for $\mathbf{S}$ is guaranteed by the fundamental theorem of asset pricing. 
Then, the price of an option on $\mathbf{S}$ equals the discounted expectation of its payoff under a risk-neutral probability measure. 
We assume that all information about the risk-neutral distribution of $\mathbf{S}$ or its constituents comes from prices of traded derivatives on these assets, and that single-asset European call options with payoff $(S_i-K)^+$ for $i=1,\dots,d$ and for all strikes $K>0$ are liquidly traded in the market. 
Assuming zero interest rates, the prices of these options are given by $\Pi^i_K = \mathbb{E}_{\mathbb{Q}}[(S_i-K)^+]$. 
Using these prices, one can fully recover the risk neutral marginal distributions $F_i$ of $S_i$ as shown by \citet{breeden}.

Let $f\colon\mathbb{R}_+^d\to\mathbb{R}$ be the payoff of a European-style option on $\mathbf{S}$. 
Given the marginal risk-neutral distributions $F_1,\dots,F_d$ of $S_1,\dots,S_d$, the price of $f(\mathbf{S})$ becomes a function of the copula $C$ of $\mathbf{S}$ and is provided by the expectation operator as defined in \eqref{pi}, i.e.
$$
\mathbb{E}_{\mathbb{Q}}[f(S_1,\dots,S_d)] = \pi_f(C). 
$$
Assuming that the only available information about the risk-neutral distribution of $\mathbf{S}$ is the marginal distributions, the set of all arbitrage-free prices for $f(\mathbf{S})$ equals $\Pi := \{\pi_f(C)\colon C\in\mathcal{C}^d\}$. 
Moreover, if additional information on the copula $C$ is available, one can narrow the set of arbitrage-free prices by formulating respective constraints on the copula. 
Let therefore $\mathcal{C}^*$ represent any of the constrained sets of copulas from Section \ref{sec:iFH} or Appendix \ref{AppA}, and define the set of arbitrage-free prices compatible with the respective constraints via  $\Pi^* := \{\pi_f(C)\colon C\in \mathcal{C}^*\}$. 
Since $\mathcal{C}^*\subset\mathcal{C}$ we have immediately that $\Pi^*\subset\Pi$.

Theorem \ref{extensionOrder} yields that if the payoff $f$ is $\Delta$-antitonic, then $\pi_f(C)$ is monotonically increasing in $C$ with respect to the lower orthant order. 
Conversely if $f$ is $\Delta$-monotonic, then $\pi_f(C)$ is monotonically increasing in $C$ with respect to the upper orthant order. 
In the following result, we exploit this fact to compute bounds on the sets $\Pi$ and $\Pi^*$. 
Let us first define the dual $\widehat{\pi}$ of the operator $\pi$ on the set of survival functions, via $\widehat{\pi}(\widehat{C}) := \pi(C)$. 

\begin{proposition}
\label{noInformationBounds}
Let $f$ be $\Delta$-antitonic and $\underline{Q}^*,\overline{Q}^*\in\mathcal{Q}^d$ be a lower and an upper bound on the constrained set of copulas $\mathcal{C}^*$ with respect to the lower orthant order. 
Then 
$$
\pi_f(W_d) \leq \pi_f(\underline{Q}^*) \leq \inf\Pi^* \leq \pi_f(C) 
  \le \sup\Pi^* \leq \pi_f(\overline{Q}^*) \leq \pi_f(M_d) = \sup\Pi
$$
for all $C \in \mathcal{C}^*$, in case the respective integrals exist, while $\inf\Pi=\pi_f(W_d)$ if $d=2$. 
In this setting, if $-f$ is $\Delta$-antitonic, then all inequalities in the above equation are reversed.

Moreover, if $f$ is $\Delta$-monotonic, $\mathcal{C}^*$ is a constrained set of copulas and $\underline{Q}^*, \overline{Q}^*\in\mathcal Q^d$ are a lower and an upper bound on $\mathcal C^*$ with respect to the upper orthant order, then
$$
\widehat{\pi}_f(W_d(\mathbf 1 \!-\! \cdot)) \leq \widehat{\pi}_f(\underline{\widehat{Q}}^*) 
  \leq \inf\Pi^* \leq \pi_f(C) \le \sup\Pi^* 
  \leq \widehat{\pi}_f(\widehat{\overline{Q}}^*) \leq \widehat{\pi}_f(M_d(\mathbf 1 \!-\! \cdot)) 
  = \sup\Pi
$$
for all $C \in \mathcal{C}^*$, if the respective integrals exist, while $\inf\Pi=\widehat{\pi}_f(W_d(\mathbf 1-\cdot))=$ holds if $d=2$.
In this setting, if $-f$ is $\Delta$-monotonic, then all inequalities in the equation above are reversed.
\end{proposition}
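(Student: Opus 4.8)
\emph{Plan of proof.} The whole statement reduces to two ingredients: the monotonicity of the (quasi-)expectation operator with respect to the relevant orthant order, already established in Theorem~\ref{extensionOrder} together with Remark~\ref{extensionOrder-E}, and the Fr\'echet--Hoeffding bounds in their classical and survival forms, which identify $W_d$ and $M_d$ as the $\preceq_{LO}$- (resp.\ $\preceq_{UO}$-) extreme points of $\mathcal{Q}^d$ and record that $M_d$ is always a genuine copula while $W_d$ is one precisely when $d=2$. For $\Delta$-antitonic $f$ I would first invoke Theorem~\ref{extensionOrder}(i) to get that $Q\mapsto\pi_f(Q)$ is increasing on $\mathcal{Q}^d$ for $\preceq_{LO}$. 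Fixing a generic $C\in\mathcal{C}^*\subseteq\mathcal{C}^d\subseteq\mathcal{Q}^d$, the hypothesis gives $\underline{Q}^*\preceq_{LO} C\preceq_{LO}\overline{Q}^*$, while the Fr\'echet--Hoeffding inequality $W_d\preceq_{LO} Q\preceq_{LO} M_d$, valid for every $Q\in\mathcal{Q}^d$, yields $W_d\preceq_{LO}\underline{Q}^*$ and $\overline{Q}^*\preceq_{LO} M_d$. Applying $\pi_f$ along this chain produces $\pi_f(W_d)\le\pi_f(\underline{Q}^*)\le\pi_f(C)\le\pi_f(\overline{Q}^*)\le\pi_f(M_d)$, all terms being finite under the stated hypothesis that the integrals exist (a sufficient condition being that of Proposition~\ref{iteration}; the finiteness propagates through the intermediate quasi-copulas since all of them lie below $M_d$).

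Next I would pass from these pointwise bounds to bounds on the price sets. Since $\pi_f(\underline{Q}^*)$ is a lower and $\pi_f(\overline{Q}^*)$ an upper bound for $\Pi^*=\{\pi_f(C)\colon C\in\mathcal{C}^*\}$, we get $\pi_f(\underline{Q}^*)\le\inf\Pi^*\le\sup\Pi^*\le\pi_f(\overline{Q}^*)$, and $\inf\Pi^*\le\pi_f(C)\le\sup\Pi^*$ for each $C\in\mathcal{C}^*$ by definition of $\inf$ and $\sup$; splicing these into the previous chain gives the displayed inequalities. For the outermost equality I would argue that $M_d\in\mathcal{C}^d$, so $\pi_f(M_d)\in\Pi$, while $C\preceq_{LO} M_d$ for every $C\in\mathcal{C}^d$ forces $\pi_f(C)\le\pi_f(M_d)$; hence $\sup\Pi=\pi_f(M_d)$. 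When $d=2$ the same argument on the lower side applies because $W_2=W_d$ is itself a copula, giving $\inf\Pi=\pi_f(W_2)$; for $d>2$ this step fails precisely because $W_d$ is only a proper quasi-copula, so $\pi_f(W_d)$ is merely a (generally non-sharp) lower bound. The $\Delta$-monotonic statement is obtained by the identical argument with $\preceq_{LO}$ replaced by $\preceq_{UO}$, using Theorem~\ref{extensionOrder}(ii), the survival form of the Fr\'echet--Hoeffding bounds, and the dual operator $\widehat{\pi}_f$; here one notes $M_d(\mathbf 1-\cdot)=\widehat{M_d}$, so $\widehat{\pi}_f(M_d(\mathbf 1-\cdot))=\pi_f(M_d)=\sup\Pi$. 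Finally, the two ``$-f$'' variants follow by applying the statements just proved to $-f$ in place of $f$ and using $\pi_{-f}=-\pi_f$, which reverses every inequality.

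\emph{Expected main obstacle.} The delicate part is the $\Delta$-monotonic case, where $\widehat{\pi}_f$ has to be evaluated at $W_d(\mathbf 1-\cdot)$ and at the survival functions $\widehat{\underline{Q}}^*,\widehat{\overline{Q}}^*$ of the quasi-copula bounds, objects which for $d>2$ lie outside the range of the plain definition $\widehat{\pi}_f(\widehat{C}):=\pi_f(C)$ for copulas $C$ --- indeed, by Example~\ref{ex:counter} the survival function $\widehat{W_d}$ need not even be $[0,1]$-valued. One therefore has to read $\widehat{\pi}_f$ as the quasi-expectation operator of Definition~\ref{def:qeo} written through survival functions, identify $\widehat{\pi}_f(W_d(\mathbf 1-\cdot))$ with $\pi_f(W_d)$, and re-derive the required monotonicity directly from the integral representation used in the proof of Theorem~\ref{extensionOrder}(ii), checking that the lower-dimensional correction terms $\varphi^J_f$ are correctly ordered along $W_d\preceq_{UO}\underline{Q}^*\preceq_{UO} C\preceq_{UO}\overline{Q}^*\preceq_{UO} M_d$ and, in particular, that the upper-orthant Fr\'echet--Hoeffding inequality $\overline{Q}^*\preceq_{UO} M_d$ extends from copulas to quasi-copulas. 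Everything else is bookkeeping.
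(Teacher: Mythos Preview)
Your proposal is correct and follows essentially the same route as the paper: chain $W_d\preceq_{LO}\underline{Q}^*\preceq_{LO} C\preceq_{LO}\overline{Q}^*\preceq_{LO} M_d$, apply Theorem~\ref{extensionOrder}(i), use that $M_d\in\mathcal{C}^d$ (and $W_2\in\mathcal{C}^2$) for the sharpness claims, then repeat with $\preceq_{UO}$ and Theorem~\ref{extensionOrder}(ii) for the $\Delta$-monotonic part, and invoke Remark~\ref{extensionOrder-E} (equivalently $\pi_{-f}=-\pi_f$) for the reversed inequalities. The concern you flag in your ``expected main obstacle'' is genuine but the paper does not resolve it either---its proof simply says the second statement ``follows analogously'' from Appendix~\ref{AppA} and Theorem~\ref{extensionOrder}(ii)---so your level of detail already exceeds what the paper provides.
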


\begin{proof}
Let $C \in \mathcal{C}^*$, then it holds that
\[
W_d \preceq_{LO} \underline{Q}^* \preceq_{LO} C \preceq_{LO} \overline{Q}^* 
  \preceq_{LO} M_d,
\]
and the result follows from Theorem \ref{extensionOrder}(i) for a $\Delta$-antitonic function $f$. 
Note that $\sup\Pi = \pi_f(M_d)$ since the upper \FH bound is again a copula. 
The second statement follows analogously from the properties of the improved \FH bounds on survival functions, which are provided in Appendix \ref{AppA}, and an application of Theorem \ref{extensionOrder}(ii).
The statements for $-f$ being $\Delta$-antitonic or $\Delta$-monotonic follow using the same arguments combined with Remark \ref{extensionOrder-E}.
\end{proof}

\begin{remark}
Let us point out that $\pi_f(M_d)$ is an upper bound on the set of prices $\Pi$ even under weaker assumptions on the payoff function $f$ than $\Delta$-motonocity or $\Delta$-antitonicity.
This is due to the fact that the upper \FH bound is a copula, thus a sharp bound on the set of all copulas. 
\citet{Hobson_Laurence_Wang_2005_1}, for example, derived upper bounds on basket options and showed that these bounds are attained by a comonotonic random vector having copula $M_d$. 
Moreover, \citet{carlier2003} obtained bounds on $\Pi$ for $f$ being monotonic of order 2 using an optimal transport approach. 
He further showed that these bounds are attained for a monotonic rearrangement of a random vector, which in turn leads to the upper \FH bound. 
\end{remark}

\begin{remark}
Let $\underline{Q}^*$ be any of the improved \FH bounds from Section \ref{sec:iFH}.
Then, the inequality 
\begin{align}\label{sharpBetterThanImproved}
\inf\Pi\leq\pi_f(\underline{Q}^*)
\end{align}
does not hold in general. 
In particular, the sharp bound $\inf\Pi$ without additional dependence information might exceed the price bound obtained using $\underline{Q}^*$. 
A sufficient condition for \eqref{sharpBetterThanImproved} to hold is the existence of a copula $C\in\mathcal{C}^d$ such that $C\leq\underline{Q}^*$. 
This condition is however difficult to verify in practice.
In many cases $\inf\Pi$ cannot be computed analytically, hence a direct comparison of the bounds is usually not possible. 
On the other hand, one can resort to computational approaches in order to check whether \eqref{sharpBetterThanImproved} is satisfied. 
A numerical method to compute $\inf\Pi$ for continuous payoff functions $f$ fulfilling a minor growth condition, based on the assignment problem, is presented in \citet{preischl2016}. 
This approach thus lends itself to a direct comparison of the bounds. 
\end{remark}

Let us recall that by Proposition \ref{partialIntegration} the computation of $\pi_f$ amounts to an integration with respect to the measure $\mu_f$ that is 
induced by the function $f$. 
The following table provides some examples of measure inducing payoff functions $f$ along with explicit representations of the integrals with respect to $\mu_f$.
More specifically, for a $\Delta$-motononic or $\Delta$-antitonic function $f$, the expression $\int g(x_{i_1},\dots,x_{i_n})\ \ud\mu_{f_I}$ refers to the summands of $\pi_f$ for $I = \{i_1,\dots,i_n\}$; see again Definition \ref{def:qeo} and \eqref{def:phi}. 
An important observation here is that the multi-dimensional integrals with respect to the copula reduce to one-dimensional integrals with respect to the induced measure, which makes the computation of option prices very fast and efficient. 

\begin{table}[H]
\begin{center}
\begin{tabular}{lcl} 
\hline \hline
Payoff $f(x_1,\dots,x_d)$ 
	& \hspace{0.25cm} $\Delta$-tonicity \hspace{0.25cm} 
	& $\int g(x_{i_1},\dots,x_{i_n})\ \ud\mu_{f}$ \\ \hline \\[-.45em]
\shortstack[l]{Digital put on maximum \\ $\mathds{1}_{\max\{x_1,\dots,x_d\}\leq K}$} 
	& $f$ antitonic 
	& $\begin{cases} g(K,\dots,K), & |I|\text{ even}\\ -g(K,\dots,K), & |I|\text{ odd}\end{cases}$ \\ \\
\shortstack[l]{Digital call on minimum \\ $\mathds{1}_{\min\{x_1,\dots,x_d\}\geq K}$} 
	& $f$ monotonic
	& $\begin{cases} g(K,\dots,K), & I = \{1,\dots,d\}\\ 0, & \text{else}\end{cases}$ \\ \\ 
\shortstack[l]{Call on minimum \\ $(\min\{x_1,\dots,x_d\}-K)^+$} 
	& $f$ monotonic
	& $\begin{cases}\int_K^\infty g(x,\dots,x) \ud x, &I = \{1,\dots,d\}\\ 0, & \text{else}\end{cases}$ \\ \\
\shortstack[l]{Put on minimum \\$(K-\min\{x_1,\dots,x_d\})^+$} 
	& $-f$ monotonic
	& $\begin{cases}\int_0^K g(x,\dots,x) \ud x, &I = \{1,\dots,d\}\\ 0, & \text{else}\end{cases}$ \\\\  
\shortstack[l]{Call on maximum \\$(\max\{x_1,\dots,x_d\}-K)^+$} 
	& $-f$ antitonic
	& $\begin{cases}-\int_K^\infty g(x,\dots,x) \ud x, &|I|\text{ even}\\
		\int_K^\infty g(x,\dots,x) \ud x, &   |I|\text{ odd}\end{cases}$ \\ \\
\shortstack[l]{Put on maximum \\$(K-\max\{x_1,\dots,x_d\})^+$} 
	& $f$ antitonic
	& $\begin{cases}\int_0^K g(x,\dots,x) \ud x, &|I|\text{ even}\\ 
		-\int_0^K g(x,\dots,x) \ud x &   |I|\text{ odd}\end{cases}$ 
		\\\\[-.5em]
\hline \hline
\end{tabular}
\caption{Examples of payoff functions for multi-asset options and the respective representation of the integral with respect to the measure $\mu_f$. 
			The formulas for the digital call on the maximum and the digital put on the minimum can be obtained by a put-call parity.}
\label{tab:payoff-rep}
\end{center}
\end{table}

\begin{remark}[Differentiable payoffs]
Assume that the payoff function is differentiable, i.e. the partial derivatives of the function $f$ exist. 
Then, we obtain the following representation for the integral with respect to $\mu_f$:
$$
\int_{\mathbb{R}_+^d} g(x_1,\dots,x_d)\ \ud\mu_f(x_1,\dots,x_d) 
  = \int_{\mathbb{R}_+^d} g(x_1,\dots,x_d)\ \frac{\partial^d f(x_1,\dots,x_d)} 
    {\partial x_1\cdots\partial x_d}\ \ud x_1\cdots \ud x_d.
$$
The formula holds, because from the definition of the volume $V_f$ we get that
$$
V_f(H) = \int_{H} \frac{\partial^d f(x_1,\dots,x_d)}
	{\partial x_1\cdots\partial x_d}\ \ud x_1\cdots \ud x_d,
$$
for every $H$-box in $\mathbb{R}^d_+$.
Differentiable $\Delta$-antitonic functions occur in problems related to utility maximization; see, for example, the definition of Mixex utility functions in \citet*{tsetlin2009}.
\end{remark}

\begin{remark}[Basket and spread options]
Although basket options on two underlyings are $\Delta$-monotonic, their higher-dimensional counterparts, i.e. $f\colon\mathbb{R}_+^d\ni(x_1,\dots,x_d) \mapsto \big(\sum_{i=1}^d \alpha_ix_i-K\big)^+$ for $\alpha_i,\dots,\alpha_d\in\mathbb{R}_+$, are neither $\Delta$-monotonic nor $\Delta$-antitonic in general. 
However, from the monotonicity of bivariate basket options it follows that their expectation is monotonic with respect to the lower and upper orthant order on the set of 2-copulas. 
Therefore, prices of bivariate basket options provide information that can be accounted for by Theorems \ref{prescribedFunctional} or \ref{regionalPrescription}. 
In particular, if $f\colon\mathbb{R}_+^2\ni(x_1,x_2)\mapsto(\alpha_1x_1+\alpha_2x_2-K)^+$ then $f$ is $\Delta$-monotonic for $\alpha_1\alpha_2> 0$, thus $\rho(C):=\pi_f(C)$ is increasing with respect to the lower and upper orthant order on $\mathcal{C}^2$. 
Analogously, if $f$ is a spread option, i.e. $\alpha_1\alpha_2< 0$, then $\rho(C):=-\pi_f(C)$ is increasing with respect to the lower and upper orthant order on $\mathcal{C}^2$. 
Thus, by means of Theorem \ref{prescribedFunctional} one can translate market prices of basket or spread options into improved \FH bounds for 2-copulas which may then serve as information to compute higher-dimensional bounds by means of Theorem \ref{regionalPrescription}.
\end{remark}

An interesting question arising naturally is under what conditions the bounds in Proposition \ref{noInformationBounds} are \textit{sharp}, in the sense that 
\begin{align}
\label{sharpnessOfExpectation}
\inf\Pi^* = \pi_f(\underline{Q}^*)
  \quad \text{ and } \quad 
\sup\Pi^* = \pi_f(\overline{Q}^*),
\end{align}
and similarly for $\pi_f(\underline{\widehat{Q}}^*)$ and $\pi_f(\widehat{\overline{Q}}^*)$. 
In Section \ref{boundsQuasiCopulas} we showed that the improved \FH bounds fail to be copulas, hence they are not sharp in general. 
However, by introducing rather strong conditions on the function $f$, we can obtain the sharpness of the integral bounds in the sense of \eqref{sharpnessOfExpectation} when $\underline{Q}^*$ and $\overline{Q}^*$ are the improved \FH bounds. 
In order to formulate such conditions we introduce the notion of an \textit{increasing $d$-track} as defined by \citet{genest}.

\begin{definition}
\label{dTrack}
Let $G_1,\dots,G_d$ be continuous, univariate distribution functions on $\overline{\mathbb{R}}$, such that $G_i(-\infty) = 0$ and $G_i(\infty) = 1$ for $i=1,\dots,d$. 
Then, $T^d:=\{(G_1(x),\dots,G_d(x))\colon x\in\overline{\mathbb{R}}\}\subset\mathbb{I}^d$ is an (increasing) $d$-track in $\mathbb{I}^d$.
\end{definition}

The following result establishes sharpness of the option price bounds, under conditions which are admitedly rather strong for practical applications.

\begin{proposition}
\label{sharpBounds}
Let $f\colon\mathbb{R}_+^d\to\mathbb{R}$ be a right-continuous, $\Delta$-monotonic function that satisfies $f(x_1,\dots,x_d) = V_f([0,x_1]\times\cdots\times[0,x_d])$. 
Assume that
$$
\mathcal{B} := \{ (F_1(x_1),\dots,F_d(x_d))\colon\mathbf{x}\in \operatorname{supp}\mu_f \} \subset T^d,
$$ 
for some $d$-track $T^d$. 
Moreover, consider the upper and lower bounds $\underline{\widehat{Q}}^{\set,C^*},\widehat{\overline{Q}}^{\set,C^*}$ from Corollary \ref{PrescriptionOnSubsetSurvival}. 
Then, if $\set\subset T^d$ it follows that 
\begin{align*}
\inf\big\{\pi_f(C)\colon C\in\widehat{\mathcal{C}}^{\set,C^*}\big\} 
  = \widehat{\pi}\Big(\underline{\widehat{Q}}^{\set,C^*}\Big)
\quad\text{and}\quad 
\sup\big\{\pi_f(C)\colon C\in\widehat{\mathcal{C}}^{\set,C^*}\big\} 
  = \widehat{\pi}\Big(\widehat{\overline{Q}}^{\set,C^*}\Big).
\end{align*}
\end{proposition}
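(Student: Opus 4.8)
The plan is to show that the lower price bound $\widehat{\pi}\big(\underline{\widehat{Q}}^{\set,C^*}\big)$ (resp. the upper price bound $\widehat{\pi}\big(\widehat{\overline{Q}}^{\set,C^*}\big)$), which by Proposition~\ref{noInformationBounds} is a lower (resp. upper) bound for $\{\pi_f(C)\colon C\in\widehat{\mathcal{C}}^{\set,C^*}\}$, is actually \emph{attained} by an explicitly constructed copula lying in $\widehat{\mathcal{C}}^{\set,C^*}$. I will carry this out for the infimum; the supremum is entirely symmetric.

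First I would reduce the operator $\pi_f$ to a plain integral against $\mu_f$. Because $f$ is $\Delta$-monotonic, right-continuous and satisfies $f(x_1,\dots,x_d)=V_f([0,x_1]\times\cdots\times[0,x_d])$, every marginal $f_J$ with $J$ a nonempty proper subset of $\{1,\dots,d\}$ vanishes identically: setting one coordinate to $0$ degenerates the defining box, so its $f$-volume is $0$. Hence $\mu_{f_J}=0$ and $f_J(\mathbf 0)=0$ for every such $J$, and the recursion \eqref{def:phi} then forces all terms $\varphi^J_f$ with $J$ a nonempty proper subset of $\{1,\dots,d\}$ to vanish. By Definition~\ref{def:qeo} this leaves, for every $Q\in\mathcal{Q}^d$,
\[
\pi_f(Q)=\int_{\mathbb{R}_+^d}\widehat{Q}\big(F_1(x_1),\dots,F_d(x_d)\big)\,\ud\mu_f(x_1,\dots,x_d)=\widehat{\pi}(\widehat{Q}),
\]
so that $\pi_f(Q)$ depends on $Q$ only through the values of $\widehat{Q}$ on $\mathcal{B}=\{(F_1(x_1),\dots,F_d(x_d))\colon\mathbf{x}\in\operatorname{supp}\mu_f\}$, since two survival functions agreeing on $\mathcal{B}$ produce $\mu_f$-a.e.\ equal integrands.

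The main step is the construction of a copula $C\in\widehat{\mathcal{C}}^{\set,C^*}$ whose survival function $\widehat{C}$ coincides with $\underline{\widehat{Q}}^{\set,C^*}$ on $\mathcal{B}$. Put $\tilde{Q}(\mathbf{u}):=\underline{\widehat{Q}}^{\set,C^*}(\mathbf{1}-\mathbf{u})$, which is a $d$-quasi-copula by Corollary~\ref{PrescriptionOnSubsetSurvival}. By hypothesis $\mathcal{B}\subset T^d$ and $\set\subset T^d$, hence $\mathcal{B}\cup\set\subset T^d$; and writing $\tilde{G}_i(x):=1-G_i(-x)$, which is again a continuous univariate distribution function on $\overline{\mathbb{R}}$, one sees that $\mathbf{1}-T^d=\{(\tilde{G}_1(x),\dots,\tilde{G}_d(x))\colon x\in\overline{\mathbb{R}}\}$ is again a $d$-track $\tilde{T}^d$, which contains $\mathbf{1}-(\mathcal{B}\cup\set)$. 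By the characterization of $d$-quasi-copulas through $d$-tracks of \citet{genest}, according to which a $d$-quasi-copula coincides on any given $d$-track with some $d$-copula, there is a copula $D$ with $D=\tilde{Q}$ on $\tilde{T}^d$. Let $C$ be the copula whose survival copula is $D$; this exists because the survival-copula map is an involution on $\mathcal{C}^d$. Then for every $\mathbf{v}\in\mathcal{B}\cup\set$ we have $\mathbf{1}-\mathbf{v}\in\tilde{T}^d$, so $\widehat{C}(\mathbf{v})=D(\mathbf{1}-\mathbf{v})=\tilde{Q}(\mathbf{1}-\mathbf{v})=\underline{\widehat{Q}}^{\set,C^*}(\mathbf{v})$. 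The equality assertion in Corollary~\ref{PrescriptionOnSubsetSurvival} gives $\underline{\widehat{Q}}^{\set,C^*}=\widehat{C^*}$ on $\set$, whence $\widehat{C}=\widehat{C^*}$ on $\set$ and therefore $C\in\widehat{\mathcal{C}}^{\set,C^*}$; and $\widehat{C}=\underline{\widehat{Q}}^{\set,C^*}$ on $\mathcal{B}$, so $\pi_f(C)=\widehat{\pi}(\widehat{C})=\widehat{\pi}\big(\underline{\widehat{Q}}^{\set,C^*}\big)$. Combining with Proposition~\ref{noInformationBounds} yields $\inf\{\pi_f(C)\colon C\in\widehat{\mathcal{C}}^{\set,C^*}\}=\widehat{\pi}\big(\underline{\widehat{Q}}^{\set,C^*}\big)$; running the same construction with $\widehat{\overline{Q}}^{\set,C^*}$ in place of $\underline{\widehat{Q}}^{\set,C^*}$ settles the supremum.

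The principal obstacle is precisely this construction: one must recognise that, once the hypothesis $f(x_1,\dots,x_d)=V_f([0,x_1]\times\cdots\times[0,x_d])$ collapses $\pi_f$ to the integral of $\widehat{Q}$ over a set $\mathcal{B}$ confined to a single $d$-track, the quasi-copula bound can be matched on $\mathcal{B}$ by a genuine copula. This rests on the track characterization of multivariate quasi-copulas together with the observation that the reflection $\mathbf{u}\mapsto\mathbf{1}-\mathbf{u}$ maps $d$-tracks to $d$-tracks, which is what allows the device to be transported to survival functions. The one remaining point, that $\widehat{\mathcal{C}}^{\set,C^*}$ is non-empty, needs no separate argument since the construction exhibits a member of it.
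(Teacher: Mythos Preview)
Your proof is correct and follows essentially the same route as the paper: reduce $\pi_f$ to the single integral $\int\widehat{Q}(F_1,\dots,F_d)\,\ud\mu_f$ using the hypothesis $f=V_f([0,x_1]\times\cdots\times[0,x_d])$, then invoke the $d$-track characterization of quasi-copulas to produce a genuine copula matching the quasi-copula bound on $T^d\supset\mathcal{B}\cup\set$, so that this copula lies in $\widehat{\mathcal{C}}^{\set,C^*}$ and realizes the bound. The paper argues slightly more tersely (it simply asserts the existence of survival copulas coinciding with $\underline{\widehat{Q}}^{\set,C^*}$ and $\widehat{\overline{Q}}^{\set,C^*}$ on $T^d$, citing \citet{lallena} rather than \citet{genest}), whereas you spell out explicitly that $\mathbf{1}-T^d$ is again a $d$-track and pass through the survival-copula involution; this extra detail is helpful but does not change the substance of the argument.
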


\begin{proof}
Since $\mathbf{u}\mapsto\underline{\widehat{Q}}^{\set,C^*}(\mathbf1-\bu)$ and $\mathbf{u}\mapsto\widehat{\overline{Q}}^{\set,C^*}(\mathbf1-\bu)$ are quasi-copulas and $\mathcal B$ is a subset of a $d$-track $T^d$, it follows from the properties of a quasi-copula, see \citet{lallena}, that there exist survival copulas $ \underline{\widehat{C}}^{\set,C^*}$ and $\widehat{\overline{C}}^{\set,C^*}$ which coincide with $\underline{\widehat{Q}}^{\set,C^*}$ and $\widehat{\overline{Q}}^{\set,C^*}$ respectively on $T^d$. 
Hence, it follows for the lower bound
\begin{align*}
\widehat{\pi}\big(\underline{\widehat{Q}}^{\set,C^*}\big) 
&= \int_{\mathbb{R}^d} 
  \underline{\widehat{Q}}^{\set,C^*}(F_1(x_1),\dots,F_d(x_d))\ 
  \ud\mu_f(x_1,\dots,x_d)\\
& = \int_{\mathcal B} 
  \underline{\widehat{Q}}^{\set,C^*}(u_1,\dots,u_d)\ 
  \ud\mu_f(F_1^{-1}(u_1),\dots,F_d^{-1}(u_d))\\
& = \int_{\mathcal B} 
  \underline{\widehat{C}}^{\set,C^*}(u_1,\dots,u_d)\ 
  \ud\mu_f(F_1^{-1}(u_1),\dots,F_d^{-1}(u_d))
  = \widehat{\pi}_f\big(\underline{\widehat{C}}^{\set,C^*}\big),
\end{align*}
where we used the fact that $f(x_1,\dots,x_d)=V_f([0,x_1]\times\cdots\times[0,x_d])$ for the first equality and that $\operatorname{supp}\mu_f=\mathcal B$ for the second one. 
The third equality follows from $\underline{\widehat{Q}}^{S,Q^*}$ and $\underline{\widehat{C}}^{S,Q^*}$ being equal on $T^d$ and thus also on $\mathcal B$. 

In addition, using that $\underline{\widehat{C}}^{\set,C^*}$ is a copula that coincides with $\underline{\widehat{Q}}^{\set,C^*}$ on $T^d$ and $\underline{\widehat{Q}}^{\set,C^*}(\mathbf{x}) = \widehat{C}^*(\mathbf{x})$ for $\mathbf{x}\in\set\subset T^d$, it follows that $\underline{\widehat{C}}^{\set,C^*}\in\widehat{\mathcal{C}}^{\set,C^*}$, hence by the $\Delta$-monotonicity of $f$ we get that $\widehat{\pi}_f(\underline{\widehat{C}}^{\set,C^*}) = \inf\{\pi_f(C)\colon 
C\in\widehat{\mathcal{C}}^{\set,C^*}\}$. 
The proof for the upper bound can be obtained in the same way. 
\end{proof}

Finally, we are ready to apply our results in order to compute bounds on prices of multi-asset options when additional information on the dependence structure of $\mathbf{S}$ is available. 
The following examples illustrate this approach for different payoff functions and different kinds of additional information.

\begin{example}
Consider an option with payoff $f(\mathbf{S})$ on three assets $\mathbf{S}=(S_1,S_2,S_3)$. 
We are interested in computing bounds on the price of $f(\mathbf{S})$ assuming that partial information on the dependence structure of $\mathbf{S}$ is available. 
In particular, we assume that the marginal distributions $S_i\sim F_i$ are implied by the market prices of European call options. 
Moreover, we assume that partial information on the dependence structure stems from market prices of liquidly traded digital options of the form $\mathds{1}_{\max\{S_i,S_j\}<K}$ for $(i,j)=(1,2),(1,3),(2,3)$ and $K\in\mathbb{R}_+$. 
The prices of such options are immediately related to the copula $C$ of $\mathbf{S}$ since
$$
\mathbb{E}_{\mathbb{Q}}[\mathds{1}_{\max\{S_1,S_2\}<K}] 
  = \mathbb{Q}(S_1<K,S_2<K,S_3<\infty) 
  = C(F_1(K),F_2(K),1),
$$
and analogously for $(i,j)=(1,3),(2,3)$, for some martingale measure $\mathbb{Q}$.

Considering a set of strikes $\mathcal{K}:=\{K_1,\dots,K_n\}$, one can recover the values of the copula of $\mathbf{S}$ at several points. 
Let $\Pi^{(i,j)}_{K}$ denote the market price of a digital option on $(S_i,S_j)$ with strike $K$. 
These market prices imply then the following prescription on the copula of $\mathbf{S}$:
\begin{align}\label{eq:market-implied-prescription}
C(F_1(K),F_2(K),1) = \Pi^{(1,2)}_{K}, \nonumber\\
C(F_1(K),1,F_3(K)) = \Pi^{(1,3)}_{K}, \\\nonumber
C(1,F_2(K),F_3(K)) = \Pi^{(2,3)}_{K},
\end{align}
for $K\in\mathcal{K}$. 
Therefore, the collection of strikes induces a prescription on the copula on a compact subset of $\mathbb{I}^3$ of the form
\begin{align*}
\set = \bigcup_{K\in\mathcal{K}} 
  \big(F_1(K), F_2(K), 1\big) \cup 
  \big(F_1(K), 1, F_3(K)\big) \cup 
  \big(1, F_2(K), F_3(K)\big).
\end{align*}
The set of copulas that are compatible with this prescription is provided by
$$
\mathcal{C}^{\set,\Pi} 
  = \big\{C\in\mathcal{C}^3 \colon C(\mathbf{x}) = \Pi^{(i,j)}_{K} \ 
  \text{for all } \mathbf{x}\in \set\big\};
$$
see again \eqref{eq:market-implied-prescription}. 
Hence, we can now employ Theorem \ref{PrescriptionOnSubset} in order to compute the improved \FH bounds on the set $\mathcal{C}^{\set,\Pi}$ as follows:
\begin{align*}
\begin{split}
\overline{Q}^{\set,\Pi}(\mathbf{u}) 
  &= \min \Big( u_1,u_2,u_3, \min_{(i,j),K} \Big\{ \Pi^{(i,j)}_{K} 
  + \sum_{l=i,j} \big(u_l-F_l(K)\big)^+ \Big\} \Big) \\
\underline{Q}^{\set,\Pi}(\mathbf{u}) 
  &= \max \Big( 0, \sum_{i=1}^3 u_i-2, 
  \max_{\substack{(i,j),K,\\k\in\{1,2,3\}\setminus\{i,j\}}}
  \Big\{\Pi^{(i,j)}_{K}-\sum_{l=i,j} \big(F_l(K)-u_l\big)^+ 
    + (1-u_k)\Big\}\Big).
\end{split}
\end{align*}
Observe that the minimum and maximum in the equations above are taken over the set \set, using simply a more convenient parametrization. 
Using these improved \FH bounds, we can now apply Proposition \ref{noInformationBounds} and compute bounds on the price of an option with payoff $f(\mathbf{S})$ depending on all three assets. 
That is, we can compute bounds on the set of arbitrage-free option prices $\big\{\pi_f(C)\colon C \in \ \mathcal{C}^{\set,\Pi}\big\}$ which are compatible with the information stemming from pairwise digital options.

As an illustration of our results, we derive bounds on a digital option depending on all three assets, i.e. $f(\mathbf{S}) = \mathds{1}_{\max\{S_1,S_2,S_3\}<K}$.
In order to generate prices of pairwise digital options, we use the multivariate Black--Scholes model, therefore $\mathbf{S} = (S_1,S_2,S_3)$ is multivariate 
log-normally distributed with $S_i = s_i\exp(-\frac{1}{2} + X_i)$ where $(X_1,X_2,X_3)\sim\mathcal{N}(\mathbf 0,\Sigma)$ with 
$$\Sigma = 
\begin{pmatrix}
1 & \rho_{1,2} & \rho_{1,3} \\
\rho_{1,2} & 1 & \rho_{2,3}\\
\rho_{1,3} & \rho_{2,3} & 1
\end{pmatrix}.$$
Let us point out that this model is used to generate `traded' prices of pairwise digital options, but does not enter into the bounds.
The bounds are derived solely on the basis of the `traded' prices.

The following figures show the improved price bounds on the 3-asset digital option as a function of the strike $K$ as well as the price bounds using the `standard' \FH bounds, where we have fixed the initial values to $s_i=10$. 
As a benchmark, we also include the prices in the Black--Scholes model. 
We consider two scenarios for the pairwise correlations: in the left plot $\rho_{i,j}=0.3$ and in the right plot $\rho_{1,2}=0.5$, $\rho_{1,3}=-0.5$, $\rho_{2,3}=0$. 
\begin{figure}[H]
  \includegraphics[trim = 5mm 1mm 6mm 6mm, clip,width=0.5\textwidth,keepaspectratio=true]{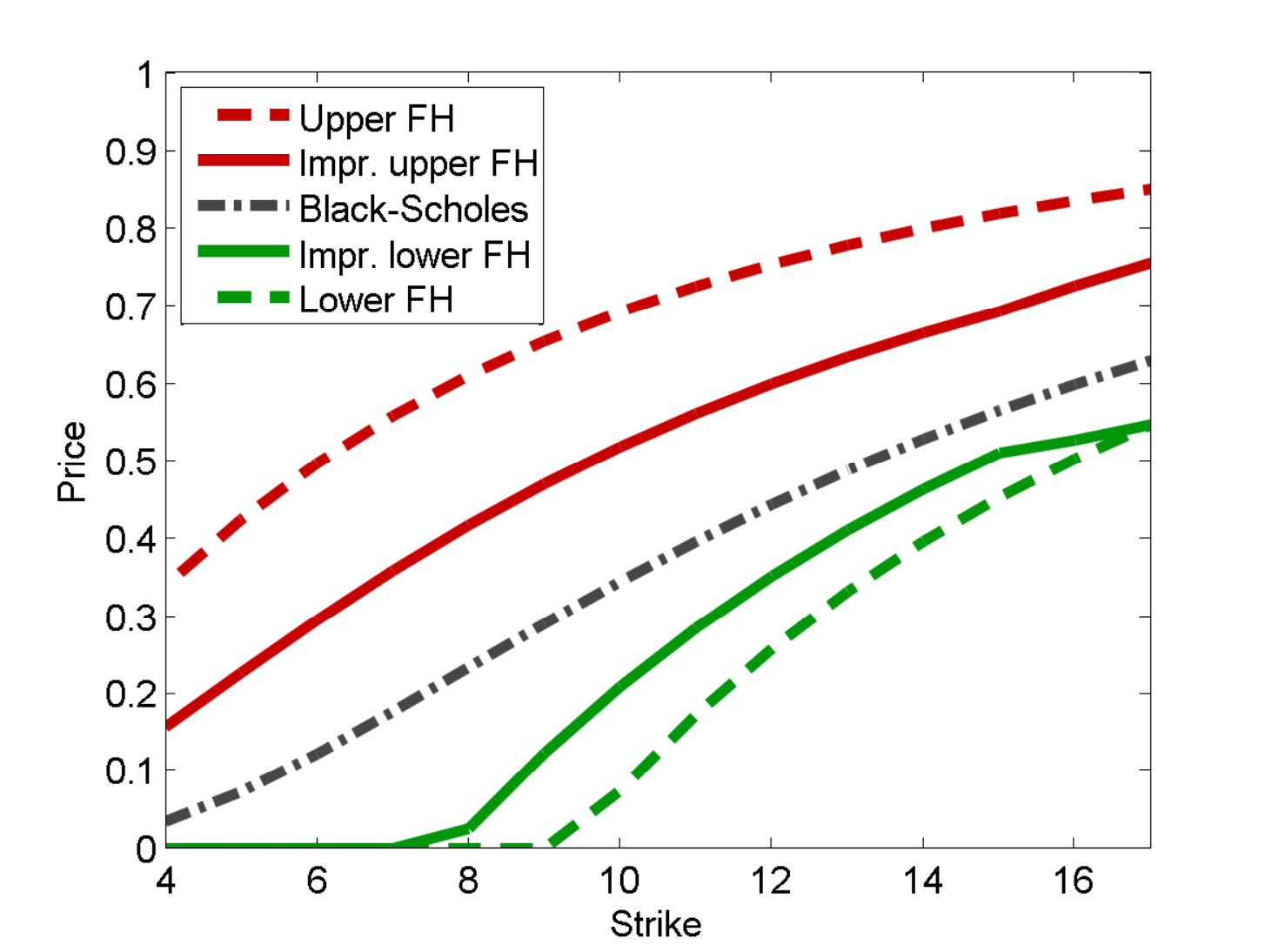}
  \includegraphics[trim = 5mm 1mm 6mm 6mm, clip,width=0.5\textwidth,keepaspectratio=true]{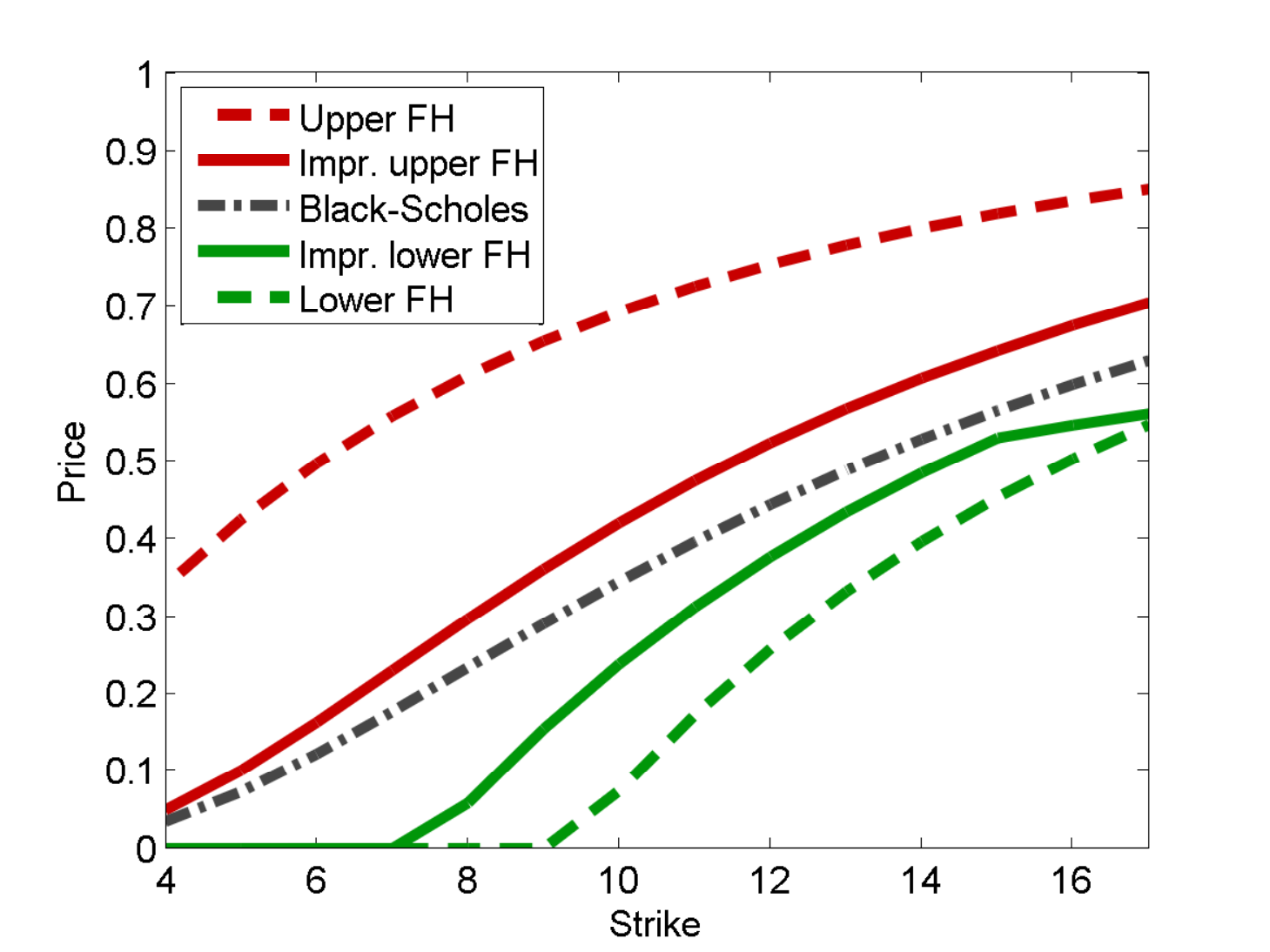}
  \vspace{-1.em}
\caption{Bounds on the prices of 3-asset digital options as functions of the strike.}
\end{figure}
Observe that the improved \FH bounds that account for the additional information from market prices of pairwise digital options lead in both cases to a considerable improvement of the option price bounds compared to the ones obtained with the `standard' \FH bounds. 
The improvement seems to be particularly pronounced if there are negative and positive correlations among the constituents of $\mathbf{S}$; see the right plot. 
\end{example}

\begin{example}
As a second example, we assume that digital options on $\mathbf{S} = (S_1,S_2,S_3)$ of the form $\mathds{1}_{\min\{S_1,S_2,S_3\}\geq K_i}$ for only two strikes $K_1,K_2\in\mathbb{R}_+$ are observed in the market. 
Their market prices are denoted by $\Pi_1,\Pi_2$, and immediately imply a prescription on the survival copula $\widehat{C}$ of $\mathbf{S}$ as follows:
$$
\Pi_i = \mathbb{Q}(S_1\geq K_i,S_2\geq K_i,S_3\geq K_i) 
	= \widehat{C}(F_1(K_i),F_2(K_i),F_3(K_i))
$$
for $i=1,2$. 
This is a prescription on two points, hence $\set=\{(F_1(K_i),F_2(K_i),F_3(K_i))\colon i=1,2\}\subset\mathbb{I}^3$, and we can employ Proposition \ref{PrescriptionOnSubsetSurvival} to compute the lower and upper bounds $\underline{\widehat{Q}}^{\set,\Pi}$ and $\widehat{\overline{Q}}^{\set,\Pi}$ on the set of copulas $\widehat{\mathcal{C}}^{\set,\Pi}=\{C \in \mathcal{C}^3 \colon \widehat{C}(\mathbf{x}) = \Pi_i,\ \mathbf{x} \in\set\}$ which are compatible with this prescription. 
We have that
\begin{align*}
\begin{split}
\widehat{\overline{Q}}^{\set,\Pi}(\mathbf{u}) 
	&= \min\Big(1-u_1,1-u_2,1-u_3,\min_{i=1,2}\Big\{\Pi_i+\sum_{l=1}^3 
		\big(F_l(K_i)-u_l\big)^+\Big\}\Big), \\
\underline{\widehat{Q}}^{\set,\Pi}(\mathbf{u}) 
	&= \max\Big(0, \sum_{i=1}^3 (1-u_i)-2,\max_{i=1,2} 
		\Big\{\Pi_i-\sum_{l=1}^3 \big(u_l-F_l(K_i)\big)^+\Big\}\Big).
\end{split}
\end{align*}

\begin{figure}[H]
  \includegraphics[trim = 5mm 1mm 6mm 6mm, clip,width=0.5\textwidth,keepaspectratio=true]{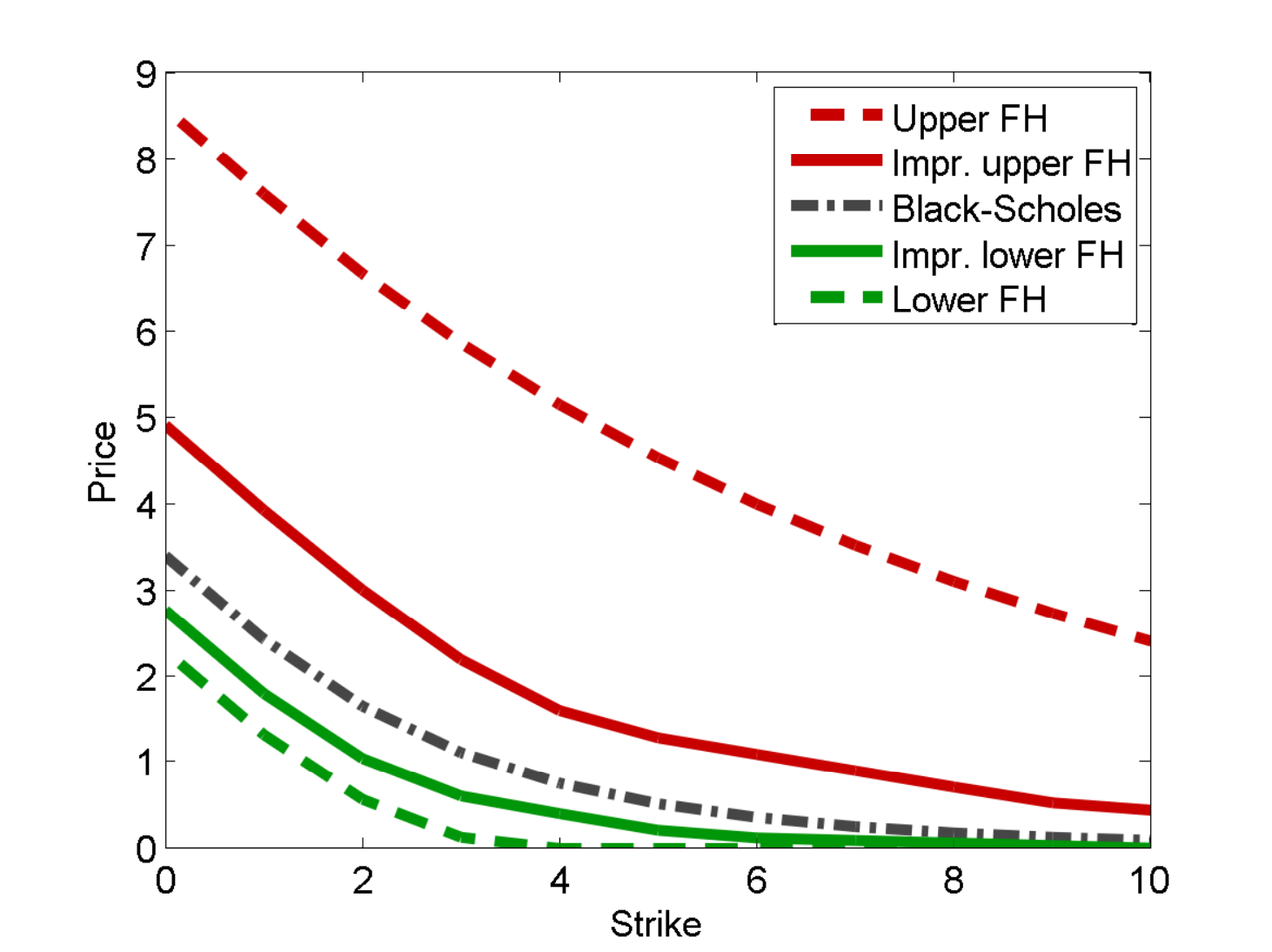}
  \includegraphics[trim = 5mm 1mm 6mm 6mm, clip,width=0.5\textwidth,keepaspectratio=true]{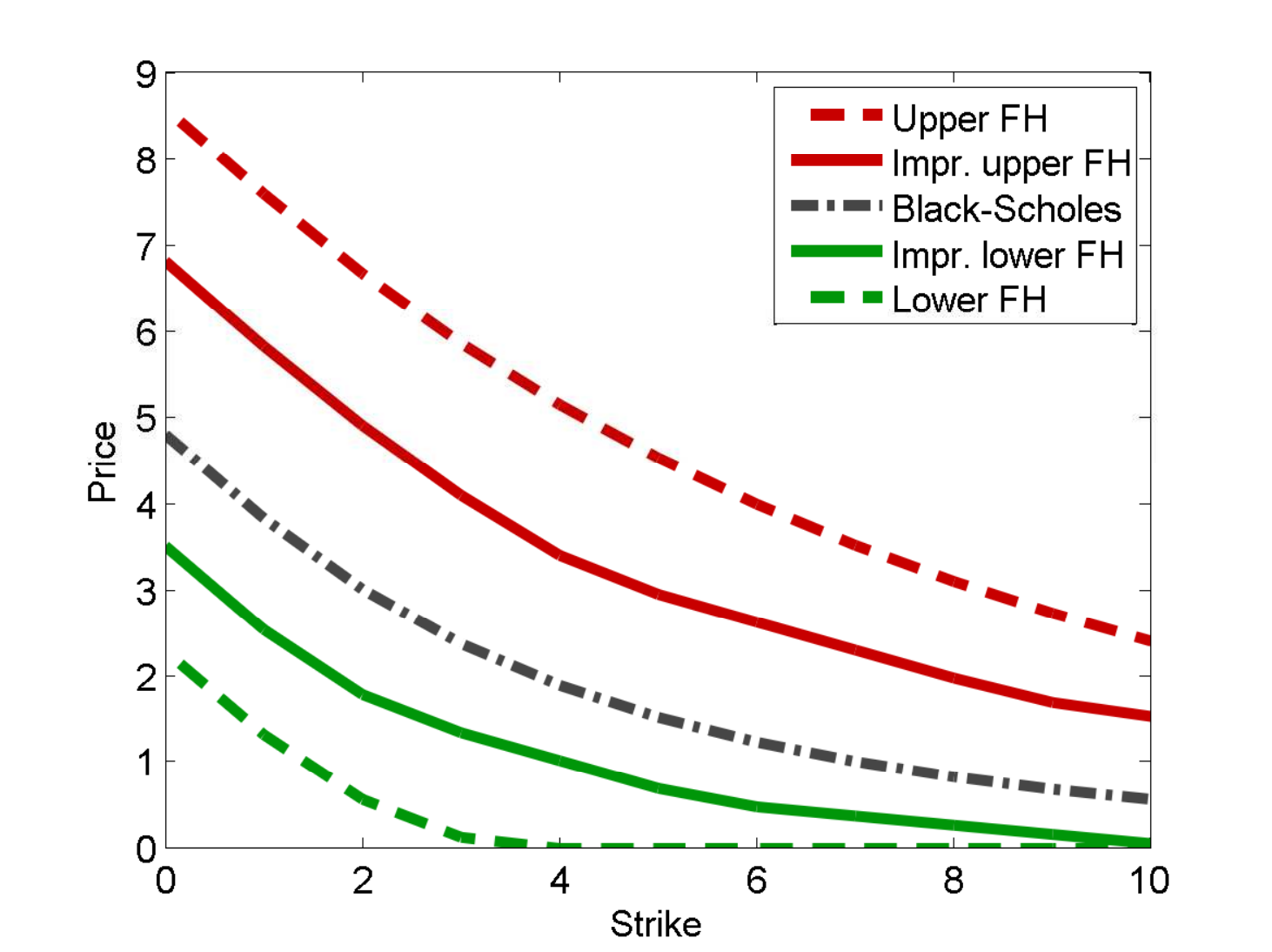}
  \vspace{-1.em}
\caption{\label{fig:2}Bounds on the prices of options on the minimum of $\mathbf{S}$ as functions of the strike.}
\end{figure}

Using these bounds, we can now apply Proposition \ref{noInformationBounds} and compute improved bounds on the set of arbitrage-free prices for a call option on 
the minimum of $\mathbf{S}$, whose payoff is $f(\mathbf{S}) = (\min\{S_1,S_2,S_3\}-K)^+$. 
The set of prices for $f(\mathbf{S})$ that are compatible with the market prices of given digital options is denoted by $\Pi^* = \big\{\widehat{\pi}_f(\widehat{C})\colon C \in \ \widehat{\mathcal{C}}^{\set,\Pi}\big\}$ and, since $f$ is $\Delta$-monotonic, it holds that $\widehat{\pi}_f(\underline{\widehat{Q}}^{\set,\Pi})\leq \pi \leq \widehat{\pi}_f(\widehat{\overline{Q}}^{\set,\Pi})$ for all $\pi\in\Pi^*$. 
The computation of $\widehat{\pi}_f(Q)$ reduces to
$$
\widehat{\pi}_f(Q) = \int_K^\infty Q\big(F_1(x),F_2(x),F_3(x)\big) \ud x,
$$
see Table \ref{tab:payoff-rep}, which is an integral over a subset of the 3-track 
$$
\{ (F_1(x),F_2(x),F_3(x)) \colon x\in\overline{\mathbb{R}}_+ \} 
  \supset \{ (F_1(x),F_2(x),F_3(x)) \colon x\in[K,\infty) \}
  \supset\set.
$$ 
Hence, Lemma \ref{sharpBounds} yields that the price bounds $\widehat{\pi}_f(\underline{\widehat{Q}}^{\set,\Pi})$ and $\widehat{\pi}_f(\widehat{\overline{Q}}^{\set,\Pi})$ are sharp, that is
$$
\widehat{\pi}_f\big(\underline{\widehat{Q}}^{\set,\Pi}\big) 
	= \inf\{\pi\colon \pi\in\Pi^*\}
\quad\text{ and }\quad
\widehat{\pi}_f\big(\widehat{\overline{Q}}^{\set,\Pi}\big) 
	= \sup\{\pi\colon \pi\in\Pi^*\}.
$$

Analogously to the previous example we assume, for the sake of a numerical illustration, that $\mathbf{S}$ follows the multivariate Black--Scholes model and the pairwise correlations are denoted by $\rho_{i,j}$. 
The parameters of the model remain the same as in the previous example. 
We then use this model to generate prices of digital options that determine the prescription. 
Figure \ref{fig:2} depicts the bounds on the prices of a call on the minimum of $\mathbf{S}$ stemming from the improved \FH bounds as a function of the strike 
$K$, as well as those from the `standard' \FH bounds. 
The price from the multivariate Black--Scholes model is also included as a benchmark. 
Again we consider two scenarios for the pairwise correlations: in the left plot $\rho_{i,j}=0$ and in the right one $\rho_{i,j}=0.5$. 
We can observe once again, that the use of the additional information leads to a significant improvement of the bounds relative to the `standard' situation, although in this example the additional information is just two prices.
\end{example}

\section{Conclusion}

This paper provides upper and lower bounds on the expectation of $f(\mathbf S)$ where $f$ is a function and $\mathbf S$ is a random vector with known marginal distributions and partially unknown dependence structure.
The partial information on the dependence structure can be incorporated via improved \FH bounds that take this into account.
These bounds are typically quasi-copulas, and not (proper) copulas.
Therefore, we provide an alternative representation of multivariate integrals with respect to copulas that allows for quasi-copulas as integrators, and new integral characterizations of orthant orders on the set of quasi-copulas.
As an application of these results, we derive model-free bounds on the prices of multi-asset options when partial information on the dependence structure between the assets is available.
Numerical results demonstrate the improved performance of the price bounds that utilize the improved \FH bounds on copulas.

\appendix

\section{Improved \FH bounds for survival copulas}
\label{AppA}

In this section we establish improved \FH bounds in the presence of additional information for survival copulas, analogous to those derived in Section \ref{sec:iFH} for copulas. 
The first Proposition establishes improved bounds if the survival copula is prescribed on a compact set.

\begin{proposition}
\label{PrescriptionOnSubsetSurvival}
Let $\set\subset\mathbb{I}^d$ be a compact set and $\widehat{Q}^*$ be a quasi-survival function. Consider 
the set 
\begin{align*}
\mathcal{\widehat{C}}^{\set,\widehat{Q}^*} 
  := \big\{ C\in\mathcal{C}^d\colon \widehat{C}(\mathbf{x}) 
  = \widehat{Q}^*(\mathbf{x}) \text{ for all } \mathbf{x}\in \set\big\}.
\end{align*}
Then, for all $C\in\mathcal{\widehat{C}}^{\set,\widehat{Q}^*}$, it holds that
\begin{align}
\begin{split}
&\underline{\widehat{Q}}^{\set,\widehat{Q}^*}(\mathbf{u}) \leq 
  \widehat{C}(\mathbf{u}) \leq \widehat{\overline{Q}}^{\set,\widehat{Q}^*}(\mathbf{u})
  \quad\text{for all }\mathbf{u}\in\mathbb{I}^d,\\
&\underline{\widehat{Q}}^{\set,\widehat{Q}^*}(\mathbf{u}) = 
  \widehat{C}(\mathbf{u}) = \widehat{\overline{Q}}^{\set,\widehat{Q}^*}(\mathbf{u})
  \quad\text{for all }\mathbf{u}\in \set,
\end{split}
\end{align}
where the bounds are provided by
$$\underline{\widehat{Q}}^{\set,\widehat{Q}^*}(\mathbf{u}) 
  :=  \underline{Q}^{\widehat{\set},\widehat{Q}^*}(1-u_1,\dots,1-u_d) 
\quad\text{and}\quad 
\widehat{\overline{Q}}^{\set,\widehat{Q}^*}(\mathbf{u}) 
  := \overline{Q}^{\widehat{\set},\widehat{Q}^*}(1-u_1,\dots,1-u_d)$$
with $\widehat{\set} = \{(1-x_1,\dots,1-x_d)\colon (x_1,\dots,x_d)\in \set\}$.
\end{proposition}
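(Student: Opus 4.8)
The plan is to reduce everything to Theorem~\ref{PrescriptionOnSubset} by passing to survival copulas and applying the reflection $\bu\mapsto\mathbf 1-\bu$. First I would record two preliminary observations: the set $\widehat{\set}=\{\mathbf 1-\bx:\bx\in\set\}$ is compact, being the image of the compact set $\set$ under the homeomorphism $\bu\mapsto\mathbf 1-\bu$ of $\I^d$; and the function $Q^\circ(\bu):=\widehat{Q}^*(\mathbf 1-\bu)$ is a $d$-quasi-copula, since $\widehat{Q}^*$ is by assumption a quasi-survival function. With this notation, $\underline{Q}^{\widehat{\set},\widehat{Q}^*}$ and $\overline{Q}^{\widehat{\set},\widehat{Q}^*}$ denote the lower and upper bounds furnished by Theorem~\ref{PrescriptionOnSubset} for the compact set $\widehat{\set}$ and the quasi-copula $Q^\circ$.

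Next I would fix an arbitrary $C\in\widehat{\mathcal C}^{\set,\widehat{Q}^*}$ and pass to its survival copula $\bar C(\bu):=\widehat{C}(\mathbf 1-\bu)$, which is a $d$-copula by the classical result recalled in Section~2 (see e.g. \citet{georges}). The prescription transforms as follows: if $\bu\in\widehat{\set}$ then $\mathbf 1-\bu\in\set$, hence $\bar C(\bu)=\widehat{C}(\mathbf 1-\bu)=\widehat{Q}^*(\mathbf 1-\bu)=Q^\circ(\bu)$. Thus $\bar C$ is a $d$-copula, in particular a $d$-quasi-copula, lying in the set $\mathcal Q^{\widehat{\set},Q^\circ}$ of Theorem~\ref{PrescriptionOnSubset}, which is therefore nonempty; applying that theorem gives
\begin{align*}
\underline{Q}^{\widehat{\set},\widehat{Q}^*}(\bu)\le\bar C(\bu)\le\overline{Q}^{\widehat{\set},\widehat{Q}^*}(\bu)\quad\text{for all }\bu\in\I^d,
\end{align*}
with equality for all $\bu\in\widehat{\set}$.

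Finally I would substitute $\bu=\mathbf 1-\bv$, use $\bar C(\mathbf 1-\bv)=\widehat{C}(\bv)$, and recall the definitions of the bounds, to obtain
\begin{align*}
\underline{\widehat{Q}}^{\set,\widehat{Q}^*}(\bv)=\underline{Q}^{\widehat{\set},\widehat{Q}^*}(\mathbf 1-\bv)\le\widehat{C}(\bv)\le\overline{Q}^{\widehat{\set},\widehat{Q}^*}(\mathbf 1-\bv)=\widehat{\overline{Q}}^{\set,\widehat{Q}^*}(\bv)
\end{align*}
for all $\bv\in\I^d$, with equality precisely when $\mathbf 1-\bv\in\widehat{\set}$, i.e. when $\bv\in\set$; this is exactly the assertion. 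As a byproduct, since $\underline{Q}^{\widehat{\set},\widehat{Q}^*}$ and $\overline{Q}^{\widehat{\set},\widehat{Q}^*}$ are quasi-copulas by Theorem~\ref{PrescriptionOnSubset}, the bounds $\underline{\widehat{Q}}^{\set,\widehat{Q}^*}$ and $\widehat{\overline{Q}}^{\set,\widehat{Q}^*}$ are quasi-survival functions. The argument involves no genuine difficulty; the only thing to be careful about is keeping the two reflections straight and checking that the compact set and the prescribed values transform correctly into $\widehat{\set}$ and $Q^\circ$, so that Theorem~\ref{PrescriptionOnSubset} (applied to the quasi-copula $\bar C$) may be invoked.
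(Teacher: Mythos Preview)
Your proof is correct and follows essentially the same route as the paper: pass from $C$ to its survival copula $\bar C(\bu)=\widehat C(\mathbf 1-\bu)$, check that the prescription transforms into $\bar C\in\mathcal Q^{\widehat{\set},Q^\circ}$, apply Theorem~\ref{PrescriptionOnSubset}, and substitute back. Your version is in fact slightly more carefully written, since you explicitly verify the hypotheses (compactness of $\widehat{\set}$, $Q^\circ$ a quasi-copula) and record the equality on $\set$ and the quasi-survival property of the bounds.
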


\begin{proof}
Let $C\in\mathcal{\widehat{C}}^{\set,\widehat{Q}^*}$. Since $C$ is a copula we know that 
$\widehat{C}(1-u_1,\dots,1-u_d)$ is also a copula. Defining $v_i:=1-x_i$, the 
prescription $\widehat{C}(1-v_1,\dots,1-v_d) = \widehat{Q}^*(x_1,\dots,x_d)$ 
holds for all $\bx\in \set$ by assumption. Thus by Theorem 
\ref{PrescriptionOnSubset} we obtain
$$ \underline{Q}^{\widehat{\set},\widehat{Q}^*}(u_1,\dots,u_d) 
  \leq \widehat{C}(1-u_1,\dots,1-u_d) 
  \leq \overline{Q}^{\widehat{\set},\widehat{Q}^*}(u_1,\dots,u_d)$$
which by a transformation of variables equals 
\begin{align*}
\underline{Q}^{\widehat{\set},\widehat{Q}^*}(1-u_1,\dots,1-u_d) 
  \leq \widehat{C}(u_1,\dots,u_d) 
  \leq \overline{Q}^{\widehat{\set},\widehat{Q}^*}(1-u_1,\dots,1-u_d). 
\end{align*}
\end{proof}

The next result establishes improved bounds if the value of a functional which is increasing with respect to the upper orthant order is prescribed. 
The proof is analogous to the proof of Theorem \ref{prescribedFunctional} and is therefore omitted. 

\begin{proposition}
\label{prescribedFunctionalSurvival}
Let $\mathrm C(\mathbb I^d)$ denote the set of continuous functions on $\mathbb I^d$, $\rho\colon \mathrm C(\mathbb I^d)\to\mathbb{R}$ be increasing with respect to the upper orthant order on $\mathcal{C}^d$ and continuous with respect to the pointwise convergence of copulas. 
Define
$$\widehat{\mathcal{C}}^{\rho,\theta}:=\{C\in \mathrm C(\mathbb I^d)\colon \rho(\widehat{C})=\theta\}.$$
Then for all $C\in\widehat{\mathcal{C}}^{\rho,\theta}$ it holds
$$\underline{\widehat{Q}}^{\rho,\theta}(\mathbf{u}) 
  \leq \widehat{C}(\mathbf{u}) 
  \leq \widehat{\overline{Q}}^{\rho,\theta}(\mathbf{u})
  \quad \text{for all }\mathbf{u}\in\mathbb{I}^d,$$
where the bounds are provided by
\begin{align*}
\underline{\widehat{Q}}^{\rho,\theta}(\mathbf{u}) 
:= \begin{cases}
  \rho_+^{-1}(\mathbf{u},\theta),
  & 
\theta\in\Big[\rho\Big(\widehat{\overline{Q}}^{\{\mathbf{u}\},W_d(\mathbf 1 - \mathbf{u})}\Big),
    \rho(M_d(\mathbf 1-\cdot))\Big]\\
  W_d(\mathbf{1}-\mathbf{u}), & \text{else},
\end{cases}
\end{align*}
and
\begin{align*}
\widehat{\overline{Q}}^{\rho,\theta}(\mathbf{u})
:= \begin{cases}
  \rho_-^{-1}(\mathbf{u},\theta),
  &\theta\in\Big[\rho(W_d(\mathbf{1}-\cdot)),\rho\Big(\underline{\widehat{Q}}^{\{\mathbf{u}\}, 
    M_d(\mathbf 1 - \mathbf{u})}\Big)\Big]\\
  M_d(\mathbf{1}-\mathbf{u}), & \text{else},
\end{cases}
\end{align*}
where
\begin{align*}
\rho_-^{-1}(\mathbf{u},\theta) 
  = \max\Big\{r\colon \rho\big(\underline{\widehat{Q}}^{\{\mathbf{u}\},r}\big) 
    = \theta\Big\}\qquad
\text{and}\qquad 
\rho_+^{-1}(\mathbf{u},\theta) 
  = \min\Big\{r\colon \rho\big(\widehat{\overline{Q}}^{\{\mathbf{u}\},r}\big) 
    = \theta\Big\},
\end{align*}
while the quasi-copulas $\underline{\widehat{Q}}^{\{\mathbf{u}\},r}$ and 
$\widehat{\overline{Q}}^{\{\mathbf{u}\},r}$ are given in Proposition 
\ref{PrescriptionOnSubsetSurvival} for $r\in\mathbb{I}$.
\end{proposition}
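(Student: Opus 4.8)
The plan is to derive the statement from Theorem \ref{prescribedFunctional} by means of the reflection $R\colon\mathbb{I}^d\to\mathbb{I}^d$, $R(\mathbf{u}):=\mathbf{1}-\mathbf{u}$, which interchanges a copula with its survival copula and, correspondingly, the lower orthant order with the upper orthant order. Recall that $h\mapsto h\circ R$ is an involutive bijection between quasi-survival functions and quasi-copulas, that it sends survival copulas of copulas to copulas, and that $C_1\preceq_{UO}C_2$ holds iff $\widehat{C}_1\le\widehat{C}_2$ pointwise iff $\widehat{C}_1\circ R\preceq_{LO}\widehat{C}_2\circ R$. First I would introduce the reflected functional $\bar\rho\colon\mathcal{Q}^d\to\mathbb{R}$, $\bar\rho(Q):=\rho(Q\circ R)$ (well defined since $Q\circ R$ is continuous), and show that it satisfies the hypotheses of Theorem \ref{prescribedFunctional}.

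The second step is that verification. If $Q_1\preceq_{LO}Q_2$ then $Q_1\le Q_2$ pointwise, hence $Q_1\circ R\le Q_2\circ R$; since increasingness of $\rho$ with respect to the upper orthant order is nothing but monotonicity of $\rho$ in the pointwise order on quasi-survival functions, this yields $\bar\rho(Q_1)\le\bar\rho(Q_2)$, so $\bar\rho$ is $\preceq_{LO}$-increasing on $\mathcal{Q}^d$. Likewise $Q_n\to Q$ pointwise forces $Q_n\circ R\to Q\circ R$ pointwise, so $\bar\rho$ inherits continuity with respect to pointwise convergence. Moreover $\bar\rho(W_d)=\rho(W_d(\mathbf{1}-\cdot))$ and $\bar\rho(M_d)=\rho(M_d(\mathbf{1}-\cdot))$, so $\theta$ lies in $(\bar\rho(W_d),\bar\rho(M_d))$. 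Theorem \ref{prescribedFunctional} then produces quasi-copulas $\underline{Q}^{\bar\rho,\theta}$ and $\overline{Q}^{\bar\rho,\theta}$ that are the pointwise minimum, resp.\ maximum, over $\mathcal{Q}^{\bar\rho,\theta}=\{Q\in\mathcal{Q}^d\colon\bar\rho(Q)=\theta\}$.

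It then remains to transport this back to survival functions and to recognize the explicit formulas. Given $C$ with $\rho(\widehat{C})=\theta$, its survival copula $D:=\widehat{C}\circ R$ satisfies $\bar\rho(D)=\rho(D\circ R)=\rho(\widehat{C})=\theta$, so $\underline{Q}^{\bar\rho,\theta}\le D\le\overline{Q}^{\bar\rho,\theta}$ pointwise; composing with $R$ and using $\widehat{C}=D\circ R$ gives $\underline{Q}^{\bar\rho,\theta}\circ R\le\widehat{C}\le\overline{Q}^{\bar\rho,\theta}\circ R$, and one sets $\underline{\widehat{Q}}^{\rho,\theta}:=\underline{Q}^{\bar\rho,\theta}\circ R$ and $\widehat{\overline{Q}}^{\rho,\theta}:=\overline{Q}^{\bar\rho,\theta}\circ R$, which are quasi-survival functions because $h\mapsto h\circ R$ carries quasi-copulas to quasi-survival functions (equivalently, one reuses the argument via \citet{Rodriguez_Ubeda_2004} from Theorem \ref{prescribedFunctional} after reflecting). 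To match the bounds displayed in the statement, I would use Proposition \ref{PrescriptionOnSubsetSurvival} in the single-point form $\underline{\widehat{Q}}^{\{\mathbf{u}\},r}=\underline{Q}^{\{\mathbf{1}-\mathbf{u}\},r}\circ R$ and $\widehat{\overline{Q}}^{\{\mathbf{u}\},r}=\overline{Q}^{\{\mathbf{1}-\mathbf{u}\},r}\circ R$; consequently $\rho(\underline{\widehat{Q}}^{\{\mathbf{u}\},r})=\bar\rho(\underline{Q}^{\{\mathbf{1}-\mathbf{u}\},r})$ and $\rho(\widehat{\overline{Q}}^{\{\mathbf{u}\},r})=\bar\rho(\overline{Q}^{\{\mathbf{1}-\mathbf{u}\},r})$, so $\rho_{-}^{-1}(\mathbf{u},\theta)$ and $\rho_{+}^{-1}(\mathbf{u},\theta)$ coincide with the corresponding generalized inverses for $\bar\rho$ at the point $\mathbf{1}-\mathbf{u}$, and the threshold values likewise translate. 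Plugging these identifications into the case distinctions of Theorem \ref{prescribedFunctional}, now evaluated at $\mathbf{1}-\mathbf{u}$, reproduces exactly the formulas asserted for $\underline{\widehat{Q}}^{\rho,\theta}$ and $\widehat{\overline{Q}}^{\rho,\theta}$.

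I expect no substantial difficulty — this is why the authors can omit the argument — only two points that need care. First, one must read ``increasing with respect to the upper orthant order'' as pointwise monotonicity on quasi-survival functions, so that $\bar\rho$ genuinely satisfies the hypotheses of Theorem \ref{prescribedFunctional} on all of $\mathcal{Q}^d$ and not merely on survival copulas of copulas. Second, one must track the substitution $\mathbf{u}\leftrightarrow\mathbf{1}-\mathbf{u}$ through every threshold and through the definitions of $\rho_{-}^{-1}$ and $\rho_{+}^{-1}$; this is precisely the bookkeeping that Proposition \ref{PrescriptionOnSubsetSurvival} performs. Equivalently, one may instead repeat the proof of Theorem \ref{prescribedFunctional} verbatim, replacing quasi-copulas by quasi-survival functions, $\preceq_{LO}$ by $\preceq_{UO}$, $W_d$ and $M_d$ by $W_d(\mathbf{1}-\cdot)$ and $M_d(\mathbf{1}-\cdot)$, and the single-point bounds of Theorem \ref{PrescriptionOnSubset} by those of Proposition \ref{PrescriptionOnSubsetSurvival}; the compactness argument giving $\sup=\max$, the monotonicity and continuity of $r\mapsto\rho(\underline{\widehat{Q}}^{\{\mathbf{u}\},r})$, and the convex-combination argument for the ``else'' branch all carry over without change.
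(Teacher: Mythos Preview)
Your proposal is correct and matches the paper's intent: the paper omits the proof, stating only that it is ``analogous to the proof of Theorem \ref{prescribedFunctional}'', and your reflection argument (or, equivalently, the verbatim repetition with the substitutions you list) is precisely that analogy made explicit. The two points of care you flag --- reading $\preceq_{UO}$-monotonicity as pointwise monotonicity on quasi-survival functions, and tracking the $\mathbf{u}\leftrightarrow\mathbf{1}-\mathbf{u}$ bookkeeping through Proposition \ref{PrescriptionOnSubsetSurvival} --- are exactly the details one needs to check, and they go through as you describe.
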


Finally, the subsequent Proposition is a version of Theorem \ref{regionalPrescription} for survival copulas. 
Its proof is analogous to the proof of Theorem \ref{regionalPrescription} and is therefore also omitted. 
Recall the definitions of the projection and lift operations on a vector and the definition of the $I$-margin of a copula. 
Moreover, recall that $\widehat{Q_I}$ denotes the survival function of $Q_I$. 

\begin{proposition}
\label{regionalPrescriptionSurvival}
Let $I_1,\dots,I_k$ be subsets of $\{1,\dots,d\}$ with $|I_j|\geq 2$ for 
$j\in\{1,\dots,k\}$ and $|I_i\cap I_j| \leq 1$ for $i,j\in\{1,\dots,k\}$, 
$i\neq j$. Let $\underline{Q}_j,\overline{Q}_j$ be $|I_j|$-quasi-copulas with 
$\underline{Q}_j\preceq_{UO}\overline{Q}_j$ for $j=1,\dots,k$ and consider the 
set
$$\widehat{\mathcal{C}}^I
  = \left\{C\in\mathcal{C}^d \colon \underline{Q}_j\preceq_{UO} 
  C_{I_j}\preceq_{UO} \overline{Q}_j,\ j=1,\dots,k \right\},$$
where $C_{I_j}$ is the $I_j$-margin of $C$. Then it holds for all 
$C\in\widehat{\mathcal{C}}^I$
$$\widehat{\overline{Q}}^I(\mathbf{u}) 
  \leq \widehat{C}(\mathbf{u}) 
  \leq \underline{\widehat{Q}}^I(\mathbf{u}) 
  \quad \text{for all }u\in\mathbb{I}^d,$$
where
\begin{align*}
&\widehat{\overline{Q}}^I(u_1,\dots,u_d) := \overline{Q}^I(1-u_1,\dots,1-u_d),\\
&\underline{\widehat{Q}}^I(u_1,\dots,u_d) := \underline{Q}^I(1-u_1,\dots,1-u_d),
\end{align*} 
while $\underline{Q}^I,\overline{Q}^I$ are provided by Theorem 
\ref{regionalPrescription}.
\end{proposition}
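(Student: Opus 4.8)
The plan is to deduce the statement from Theorem \ref{regionalPrescription} by the same reflection device that turns Theorem \ref{PrescriptionOnSubset} into Proposition \ref{PrescriptionOnSubsetSurvival}: given $C\in\widehat{\mathcal C}^I$, pass to its survival copula $\check C(\mathbf u):=\widehat C(\mathbf 1-\mathbf u)$, which is again a $d$-copula, establish the corresponding two-sided estimate for $\check C$, and then undo the substitution $\mathbf u\mapsto\mathbf 1-\mathbf u$. The one identity that welds the two statements together is the compatibility of lower-dimensional margins with the survival transform: for every $d$-copula $C$ and every $I\subseteq\{1,\dots,d\}$ the $I$-margin of $\check C$ is the survival copula of the $I$-margin $C_I$, i.e. $(\check C)_I(\mathbf v_I)=\widehat{C_I}(\mathbf 1_I-\mathbf v_I)$. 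The quickest route is probabilistic: if $\mathbf U\sim C$ then $\widehat C(\mathbf w)=\mathbb P(U_i>w_i,\ i=1,\dots,d)$, and setting $w_l=0$ for $l\notin I$ collapses this to $\mathbb P(U_i>w_i,\ i\in I)=\widehat{C_I}(\mathbf w_I)$; alternatively one expands the inclusion--exclusion formula for $\widehat C$.

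Granting this, fix $C\in\widehat{\mathcal C}^I$. By definition of the upper orthant order the constraint $\underline Q_j\preceq_{UO}C_{I_j}\preceq_{UO}\overline Q_j$ says $\widehat{\underline Q}_j\le\widehat{C_{I_j}}\le\widehat{\overline Q}_j$ pointwise, and replacing the argument by $\mathbf 1-\cdot$ and invoking the margin identity turns this into $\check{\underline Q}_j\preceq_{LO}(\check C)_{I_j}\preceq_{LO}\check{\overline Q}_j$, where $\check R:=\widehat R(\mathbf 1-\cdot)$; the combinatorial hypotheses $|I_j|\ge2$, $|I_i\cap I_j|\le1$ are untouched and $\underline Q_j\preceq_{UO}\overline Q_j$ gives $\check{\underline Q}_j\preceq_{LO}\check{\overline Q}_j$, so $\check C$ belongs to the set $\mathcal Q^I$ of Theorem \ref{regionalPrescription} built from the data $\check{\underline Q}_1,\dots,\check{\underline Q}_k$ and $\check{\overline Q}_1,\dots,\check{\overline Q}_k$. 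Theorem \ref{regionalPrescription} then yields $\underline Q^I(\mathbf v)\le\check C(\mathbf v)\le\overline Q^I(\mathbf v)$ for all $\mathbf v$, and evaluating at $\mathbf v=\mathbf 1-\mathbf u$ together with $\widehat C(\mathbf u)=\check C(\mathbf 1-\mathbf u)$ produces exactly the two functions $\underline Q^I(\mathbf 1-\cdot)$ and $\overline Q^I(\mathbf 1-\cdot)$ of the statement. Unwinding the formulas of Theorem \ref{regionalPrescription}, the upper estimate for $\widehat C$ is $\min\bigl(\min_j\widehat{\overline Q}_j(\mathbf u_{I_j}),\,M_d(\mathbf 1-\mathbf u)\bigr)$ and the lower estimate is $\max\bigl(\max_j\{\widehat{\underline Q}_j(\mathbf u_{I_j})-\sum_{l\notin I_j}u_l\},\,W_d(\mathbf 1-\mathbf u)\bigr)$; this is also what one reads off directly -- as the remark that the proof is ``analogous to the proof of Theorem \ref{regionalPrescription}'' suggests -- using $\{U_i>u_i\ \forall i\}\subseteq\{U_i>u_i\ \forall i\in I_j\}$ for the upper bound and $\mathbb P(A\cap B)\ge\mathbb P(A)-\mathbb P(B^c)$ with $A=\{U_i>u_i\ \forall i\in I_j\}$, $B=\{U_l>u_l\ \forall l\notin I_j\}$, $\mathbb P(B^c)\le\sum_{l\notin I_j}u_l$ for the lower bound, combined in both cases with $W_d(\mathbf 1-\cdot)\le\widehat C\le M_d(\mathbf 1-\cdot)$. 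That these two bounds inherit the structural properties expected of them (being quasi-survival functions, and -- for the sharpness half -- lying in $\widehat{\mathcal C}^I$) follows verbatim from the corresponding parts of the proof of Theorem \ref{regionalPrescription} applied to $\check C$.

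The only step that demands care -- a bookkeeping hazard rather than a genuine obstacle -- is the direction of the inequalities under the reflection: since $\mathbf u\mapsto\mathbf 1-\mathbf u$ interchanges the two Fr\'echet--Hoeffding bounds, the ``$\min$-type'' bound $M_d(\mathbf 1-\cdot)$ must surface as the upper estimate for a survival function and the ``$\max$-type'' bound $W_d(\mathbf 1-\cdot)$ as the lower one, and correspondingly the lower/upper orthant-order data of the margins must be matched in the right slots. A secondary, minor point is that Theorem \ref{regionalPrescription} wants genuine quasi-copulas as input, whereas the survival transform of a quasi-copula need not be a quasi-copula once $|I_j|>2$ (cf. Example \ref{ex:counter}); this is automatic for $|I_j|=2$, which is the case relevant to the applications, and in general enters tacitly through the hypotheses on $\underline Q_j,\overline Q_j$. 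Once these points are pinned down, the remainder is a mechanical transcription of the proof of Theorem \ref{regionalPrescription}.
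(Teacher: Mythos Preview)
Your proposal is correct and aligns with the paper's own treatment: the paper omits the proof entirely, stating only that it ``is analogous to the proof of Theorem \ref{regionalPrescription}.'' You supply precisely such an argument, carrying out the reflection $\mathbf u\mapsto\mathbf 1-\mathbf u$ exactly as the paper does in passing from Theorem \ref{PrescriptionOnSubset} to Proposition \ref{PrescriptionOnSubsetSurvival}, and you also sketch the direct probabilistic derivation of the two bounds, which is what the paper's phrase ``analogous'' most naturally points to.

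One small remark on the point you flag about $\check{\underline Q}_j,\check{\overline Q}_j$ possibly failing to be quasi-copulas when $|I_j|>2$: this is a genuine obstruction only for the \emph{sharpness} part of Theorem \ref{regionalPrescription}, not for the inequalities themselves. Inspecting that proof, the two-sided bound uses only that $\check C$ is increasing and Lipschitz together with the pointwise constraints $\check{\underline Q}_j\le(\check C)_{I_j}\le\check{\overline Q}_j$; no structural property of the constraint functions is invoked. Since Proposition \ref{regionalPrescriptionSurvival} asserts only the bounds and not sharpness, your reduction goes through unconditionally, and your caveat can be dropped.
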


\section{The improved \FH bounds are not copulas: the general case}
\label{AppB}

The following Theorem is a generalization of Theorem \ref{lowerBoundIsQuasiCopula} for $d>3$.
\begin{theorem}
\label{lowerBoundIsQuasiCopulaApp}
Consider the compact subset $\set$ of $\I^d$
\begin{align}\label{eq:app-set}
\set &= [0,1]\times\cdots\times[0,1] \!\times\!
    \underset{i-\text{th component}}
      {\underbrace{\Big([0,1]\setminus(s_i,s_i+\varepsilon_i)\Big)}} 
    \times[0,1]\times \!\cdots\! \times[0,1]\times
    \underset{j-\text{th component}} 
      {\underbrace{\Big([0,1]\setminus(s_j,s_j+\varepsilon_j)\Big)}} \notag\\
&\quad \times[0,1]\times \!\cdots\! \times[0,1]\times
    \underset{k-\text{th component}} 
      {\underbrace{\Big([0,1]\setminus(s_k,s_k+\varepsilon_k)\Big)}}
    \times[0,1] \!\cdots\! \times[0,1],
\end{align}
for $\mathbf{s}=(s_i,s_j,s_k),\overline{\mathbf{s}}=(s_i+\varepsilon_i,s_j+\varepsilon_j,s_k+\varepsilon_k)\in\mathbb{I}^3$ and 
$\varepsilon_i,\varepsilon_j,\varepsilon_k>0$. Moreover, let $C^*$ be a 
$d$-copula (or a $d$-quasi-copula) such that
\begin{align}
& \sum_{i=1}^3 \varepsilon_i > 
  C^*(\overline{\mathbf{s}}'_I)-C^*(\mathbf{s}'_I)>0, \label{reqApp1.1}\\
& C^*(\mathbf{s}'_I) \geq 
  W_d(\overline{\mathbf{s}}'_I) \label{reqApp1.2},
\end{align}
where $I:=\{i,j,k\}$ and $\mathbf{s}'_I$, $\overline{\mathbf{s}}'_I$ are defined
by the lift operation. Then $\underline{Q}^{\set,C^*}$ is a proper quasi-copula.
\end{theorem}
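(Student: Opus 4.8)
The plan is to reduce the claim to its three-dimensional version, Theorem~\ref{lowerBoundIsQuasiCopula}, by passing to the $I$-margin with $I=\{i,j,k\}$. Since a permutation of coordinates changes neither the conclusion nor the hypotheses, I would first assume without loss of generality that $I=\{1,2,3\}$, so that the prescription set factors as $\set=\set_3\times[0,1]^{d-3}$ with $\set_3:=\prod_{l=1}^{3}\big([0,1]\setminus(s_l,s_l+\varepsilon_l)\big)$, a compact subset of $\I^3$. Writing $C^*_I$ for the $I$-margin of $C^*$ (a $3$-quasi-copula, and a $3$-copula when $C^*$ is a $d$-copula), the crux of the argument is the identity
\[
\big(\underline{Q}^{\set,C^*}\big)_I \;=\; \underline{Q}^{\set_3,\,C^*_I}\qquad\text{on }\I^3,
\]
where the right-hand side is the lower bound provided by Theorem~\ref{PrescriptionOnSubset} in dimension three.

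To establish this identity I would evaluate the left-hand side at $\mathbf v\in\I^3$, that is, evaluate $\underline{Q}^{\set,C^*}$ at the lift of $\mathbf v$ to $\I^d$ (ones in the slots outside $I$), using formula~\eqref{bounds}. The affine term $\sum_{l=1}^d u_l-d+1$ collapses to $v_1+v_2+v_3-2$, because the $d-3$ coordinates outside $I$ each equal $1$; in the term $\max_{\mathbf x\in\set}\{C^*(\mathbf x)-\sum_{l=1}^d(x_l-u_l)^+\}$ the summands with $l\notin I$ vanish since there $u_l=1\ge x_l$; and because $\set$ is a Cartesian product whose factors outside $I$ are all of $[0,1]$ while $C^*$ is increasing in each argument, the maximisation over those coordinates is attained at $x_l=1$, replacing $C^*$ by its $I$-margin $C^*_I$. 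What is left is exactly $\max\big(0,\,v_1+v_2+v_3-2,\,\max_{\mathbf x\in\set_3}\{C^*_I(\mathbf x)-\sum_{l=1}^3(x_l-v_l)^+\}\big)$, which is $\underline{Q}^{\set_3,C^*_I}(\mathbf v)$. I would then note that hypotheses~\eqref{reqApp1.1}--\eqref{reqApp1.2} are precisely hypotheses~\eqref{req1.1}--\eqref{req1.2} of Theorem~\ref{lowerBoundIsQuasiCopula} for the pair $(\set_3,C^*_I)$: indeed $C^*(\mathbf{s}'_I)=C^*_I(\mathbf{s})$ and $C^*(\overline{\mathbf{s}}'_I)=C^*_I(\overline{\mathbf{s}})$ by definition of the $I$-margin, and $W_d(\overline{\mathbf{s}}'_I)=W_3(\overline{\mathbf{s}})$ since the $d-3$ unit coordinates contribute exactly $d-3$ to the sum defining $W_d$.

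Hence Theorem~\ref{lowerBoundIsQuasiCopula} applies and yields that $\underline{Q}^{\set_3,C^*_I}$, and therefore $\big(\underline{Q}^{\set,C^*}\big)_I$, fails to be $3$-increasing; fix a $3$-box on which its volume is strictly negative. To conclude I would argue by contradiction: $\underline{Q}^{\set,C^*}$ is a $d$-quasi-copula by Theorem~\ref{PrescriptionOnSubset}, and if it were also a $d$-copula it would be $d$-increasing. But the grounding condition~\ref{cond:QC1} gives, for any quasi-copula $Q$, that $\Delta^l_{0,1}Q$ is just $Q$ with its $l$-th argument set to $1$; consequently the $\underline{Q}^{\set,C^*}$-volume of the $d$-box obtained from the fixed $3$-box by appending $(0,1]$ in every coordinate outside $I$ equals the (negative) volume of $\big(\underline{Q}^{\set,C^*}\big)_I$ over that $3$-box. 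This contradicts $d$-increasingness, so $\underline{Q}^{\set,C^*}$ must be a proper quasi-copula.

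The only genuinely delicate point I expect is the reduction step: one must verify that restricting the explicit formula~\eqref{bounds} to the face $\{u_l=1:\ l\notin I\}$ reproduces the three-dimensional bound exactly, in particular that the inner maximum over the product factors outside $I$ is attained at $x_l=1$ and is consistent with the constraint $\mathbf x\in\set$. This holds here only because $\set$ has full factors $[0,1]$ in those coordinates --- precisely the shape~\eqref{eq:app-set} --- so it deserves to be written out carefully; the remaining steps are routine bookkeeping with~\eqref{bounds} and the matching of the constants.
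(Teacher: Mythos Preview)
Your proposal is correct and follows essentially the same route as the paper's proof. Both arguments lift the $3$-box from Theorem~\ref{lowerBoundIsQuasiCopula} to a $d$-box by appending the full interval in the coordinates outside $I$ and use \ref{cond:QC1} to collapse the extra $\Delta^l_{0,1}$ operators, so that the $d$-volume equals the (negative) $3$-volume; the paper carries this out by computing the eight surviving vertex terms of $V_{\underline{Q}^{\set,C^*}}(H)$ directly, whereas you make the reduction cleaner by first isolating the marginal identity $\big(\underline{Q}^{\set,C^*}\big)_I=\underline{Q}^{\set_3,C^*_I}$ and then invoking Theorem~\ref{lowerBoundIsQuasiCopula} once.
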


\begin{proof}
From Theorem \ref{lowerBoundIsQuasiCopula} we know already that the statement holds if $d=3$. For the general case, i.e. $d>3$, choose $u_l\in[0,1]$ with $u_l\in(s_l,s_l+\varepsilon_l)$ for $l\in I=\{i,j,k\}$, such that
\begin{align*}
& C^*(\overline{\mathbf{s}}'_I)-C^*(\mathbf{s}'_I) 
  < \sum_{l\in I} (s_l+\varepsilon_l -u_l) \quad \text{and} \\
& C^*(\overline{\mathbf{s}}'_I)-C^*(\mathbf{s}'_I) 
  > \sum_{l\in J}(s_l+\varepsilon_l -u_l)\quad\text{for } J = (i,j),(j,k),(i,k);
\end{align*}
this exists due to \eqref{reqApp1.1}. Then, considering the set 
\begin{align*}
H &= [0,1]\times\cdots\times[0,1]\times[u_i,s_i+\varepsilon_i]\times[0,1]
  \times\cdots\times[0,1]\times[u_j,s_j+\varepsilon_j]\times[0,1]\times\cdots\\
&\quad \times[0,1]\times[u_j,s_j+\varepsilon_j]\times[0,1]\times\dots\times[0,1]
\end{align*}
and using similar argumentation as in the case $d=3$ together with property  \ref{cond:QC1}, it follows that
\begin{align*}
V_{\underline{Q}^{\set,C^*}}(H) 
&= \underline{Q}^{\set,C^*} \big(\overline{\mathbf{s}}'\big) 
 - \underline{Q}^{\set,C^*} \big((u_i,s_j+\varepsilon_j,s_k+\varepsilon_k)'\big) 
 - \dots \\
&\quad +\underline{Q}^{\set,C^*} \big((u_i,u_j,s_k+\varepsilon_k)'\big) + \dots
 - \underline{Q}^{\set,C^*}\big(\bu_I'\big) \\
&= C^*\big(\overline{\mathbf{s}}'_I\big)
 - 3\, C^*\big(\overline{\mathbf{s}}'_I\big) 
 + \sum_{l\in I}(s_l+\varepsilon_l-u_l) \\
&\quad + 3\, C^*\big(\overline{\mathbf{s}}'_I\big) 
 - 2\sum_{l\in I} (s_l+\varepsilon_l-u_l) - C^*\big(\mathbf{s}'_I\big)\\
&= C^*\big(\overline{\mathbf{s}}'_I\big) - C^*\big(\mathbf{s}'_I\big)
 - \sum_{l\in I} (s_l+\varepsilon_l-u_l)<0.
\end{align*}
Hence, $\underline{Q}^{\set,C^*}$ is a proper-quasi-copula.
\end{proof}

A general version of Corollary \ref{lowerBoundIsQuasiCopulaCor} also holds.
\begin{corollary}
Let $C^*$ be a $d$-copula and $\set\subset\mathbb{I}^d$ be compact. If there 
exists a compact set $\set'\supset \set$ such that $\set'$ and 
$Q^*:=\underline{Q}^{\set,C^*}$ satisfy the assumptions of Theorem 
\ref{lowerBoundIsQuasiCopulaApp}, then $\underline{Q}^{\set,C^*}$ is a proper 
quasi-copula.
\end{corollary}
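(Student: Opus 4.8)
The plan is to mimic the proof of Corollary \ref{lowerBoundIsQuasiCopulaCor} essentially verbatim, since nothing in that argument was specific to dimension three. First I would invoke Theorem \ref{lowerBoundIsQuasiCopulaApp} directly with the set $\set'$ and the quasi-copula $Q^*:=\underline{Q}^{\set,C^*}$; by hypothesis $\set'$ and $Q^*$ satisfy \eqref{reqApp1.1}--\eqref{reqApp1.2}, so $\underline{Q}^{\set',Q^*}$ is a proper quasi-copula. It then suffices to prove the identity $\underline{Q}^{\set',Q^*}=\underline{Q}^{\set,C^*}$, from which the claim is immediate.

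To establish this identity I would use the characterisation, furnished by Theorem \ref{PrescriptionOnSubset}, of $\underline{Q}^{\set,C^*}$ and $\underline{Q}^{\set',Q^*}$ as the pointwise infima of $\mathcal{Q}^{\set,C^*}$ and $\mathcal{Q}^{\set',Q^*}$, respectively. For one inequality: since $\set\subset\set'$ and, by the equality part of Theorem \ref{PrescriptionOnSubset}, $\underline{Q}^{\set',Q^*}=Q^*=\underline{Q}^{\set,C^*}=C^*$ on $\set$, the quasi-copula $\underline{Q}^{\set',Q^*}$ belongs to $\mathcal{Q}^{\set,C^*}$, whence $\underline{Q}^{\set,C^*}(\bu)\le\underline{Q}^{\set',Q^*}(\bu)$ for all $\bu\in\mathbb{I}^d$. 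For the reverse inequality, the equality part of Theorem \ref{PrescriptionOnSubset} applied to $\underline{Q}^{\set,C^*}$ itself gives $\underline{Q}^{\set,C^*}=Q^*$ on all of $\set'$, so $\underline{Q}^{\set,C^*}\in\mathcal{Q}^{\set',Q^*}$ and therefore $\underline{Q}^{\set,C^*}(\bu)\ge\underline{Q}^{\set',Q^*}(\bu)$ for all $\bu\in\mathbb{I}^d$. Combining the two inequalities yields $\underline{Q}^{\set,C^*}=\underline{Q}^{\set',Q^*}$, which is a proper quasi-copula.

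I do not expect any genuine obstacle here. The only points requiring a little care are that Theorem \ref{lowerBoundIsQuasiCopulaApp} must be allowed to take a quasi-copula (namely $Q^*$) in the role of $C^*$, which is explicitly permitted by its statement, and that $Q^*=\underline{Q}^{\set,C^*}$ is indeed a $d$-quasi-copula, which is guaranteed by Theorem \ref{PrescriptionOnSubset}. Beyond that, everything is a direct transcription of the three-dimensional argument, with $\mathbb{I}^3$ replaced by $\mathbb{I}^d$ and the structure of \eqref{minimalS} replaced by the $\{i,j,k\}$-structure of \eqref{eq:app-set}; so the proof can simply be stated as ``analogous to the proof of Corollary \ref{lowerBoundIsQuasiCopulaCor}'' once the applicability of Theorem \ref{lowerBoundIsQuasiCopulaApp} to $(\set',Q^*)$ has been noted.
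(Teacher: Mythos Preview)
Your proposal is correct and follows exactly the approach the paper intends: the paper does not spell out a separate proof for this corollary but introduces it with ``A general version of Corollary \ref{lowerBoundIsQuasiCopulaCor} also holds,'' implicitly deferring to the three-dimensional argument you have transcribed. Your two-inclusion proof of $\underline{Q}^{\set',Q^*}=\underline{Q}^{\set,C^*}$ via the pointwise-infimum characterisation is precisely the content of the proof of Corollary \ref{lowerBoundIsQuasiCopulaCor}, and your observations about Theorem \ref{lowerBoundIsQuasiCopulaApp} allowing a quasi-copula in place of $C^*$ are the only additional checks needed.
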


The next result establishes similar conditions for the upper bound to be a proper-quasi copula. 

\begin{theorem}
\label{upperBoundIsQuasiCopulaApp}
Consider the compact subset $\set$ of $\I^d$ in \eqref{eq:app-set} for $\mathbf{s}=(s_i,s_j,s_k),\overline{\mathbf{s}}=(s_i+\varepsilon_i,s_j+\varepsilon_j,s_k+\varepsilon_k)\in\mathbb{I}^3$ and $\varepsilon_i,\varepsilon_j,\varepsilon_k>0$. Let $C^*$ be a $d$-copula (or $d$-quasi-copula) such that 
\begin{align}
& \sum_{i=1}^3 \varepsilon_i>C^*(\overline{\mathbf{s}}'_I)-C^*(\mathbf{s}'_I)>0,  \label{reqApp1.5}\\
& C^*(\overline{\mathbf{s}}'_I)\leq M_d(\mathbf{s}'_I),
\label{reqApp1.6}
\end{align}
where $I=\{i,j,k\}$ and $\mathbf{s}'_I$ and $\overline{\mathbf{s}}'_I$ are defined by the lift operation. Then $\overline{Q}^{\set,C^*}$ is a proper quasi-copula.

\begin{proof}
We show that the result holds for $d=3$. The general case for $d>3$ follows as in the proof of Theorem \ref{lowerBoundIsQuasiCopulaApp}. Let $C^*$ be a 3-copula and $\set=\mathbb{I}^3\setminus(\mathbf{s},\mathbf{s}+\pmb{\varepsilon})$ for some $\mathbf{s}\in[0,1]^3$ and $\varepsilon_i>0$, $i=1,2,3$. Moreover, choose $\mathbf{u}=(u_1,u_2,u_3)\in(\mathbf{s},\mathbf{s}+\pmb{\varepsilon})$ such that
\begin{align}
&C^*(\mathbf{s}+\pmb{\varepsilon})-C^*(\mathbf{s}) < \sum_{i=1}^3 
(s_i+\varepsilon_i -u_i)\quad\text{and}\label{reqApp1.7}\\
&C^*(\mathbf{s}+\pmb{\varepsilon})-C^*(\mathbf{s}) > \sum_{i\in I} 
(s_i+\varepsilon_i -u_i)\quad \text{for } I = (1,2),(2,3),(1,3); \label{reqApp1.8}
\end{align}
such a $\mathbf{u}$ exists due to \eqref{reqApp1.5}. Now, in order to show that $\overline{Q}^{\set,C^*}$ is not $d$-increasing, and thus a proper quasi-copula, it suffices to prove that $V_{\overline{Q}^{\set,C^*}}([\mathbf{s},\mathbf{u}])<0$. By the definition of $V_{\overline{Q}^{\set,C^*}}$ we have
\begin{align*}
V_{\overline{Q}^{\set,C^*}}([\mathbf{s},\mathbf{u}]) 
&= \overline{Q}^{\set,C^*}(\mathbf{u})-\overline{Q}^{\set,C^*}(s_1,u_2,u_3)-\overline{Q}^{\set,C^*}(u_1,s_2,u_3)-\overline{Q}^{\set,C^*}(u_1,u_2,s_3)\\
&\quad +\overline{Q}^{\set,C^*}(s_1,s_2,u_3)+\overline{Q}^{\set,C^*}(s_1,u_2,s_3)+\overline{Q}^{\set,C^*}(u_1,s_2,s_3) - \overline{Q}^{\set,C^*}(\mathbf{s}).
\end{align*}

Analyzing the summands we see that
\begin{itemize}
\item $\overline{Q}^{\set,C^*}(\mathbf{u}) = \min_{x\in \set}\Big\{C^*(\mathbf{x})+\sum_{i=1}^3(x_i-u_i)^+\Big\} =  
C^*(\mathbf{s}+\pmb{\varepsilon})$, where the first equality holds due to \eqref{reqApp1.6} and the second one due to \eqref{reqApp1.7}.

\item $\overline{Q}^{\set,C^*}(s_1,u_2,u_3) =  \min_{x\in \set}\Big\{C^*(\mathbf{x})+(s_1-x_1)^++(u_2-x_2)^++(u_3-x_3)^+\Big\} =  \min_{x\in 
\set}\Big\{C^*(\mathbf{s}+\pmb{\varepsilon}), C^*(\mathbf{s})+(u_2-x_2)^++(u_3-x_3)^+\Big\} =  C^*(\mathbf{s})+(u_2-s_2)+(u_3-s_3)$, where the first equality holds due to \eqref{reqApp1.6} and the third one due to \eqref{reqApp1.8}. The second equality holds since the minimum is only attained at either $\mathbf{s}$ or $\mathbf{s}+\pmb{\varepsilon}$. Analogously it follows that $\overline{Q}^{\set,C^*}(u_1,s_2,u_3) = C^*(\mathbf{s})+(u_1-s_1)+(u_3-s_3)$ and $\overline{Q}^{\set,C^*}(u_1,u_2,s_3) = C^*(\mathbf{s})+(u_1-s_1)+(u_2-s_2)$.

\item Using similar argumentation it follows that $\overline{Q}^{\set,C^*}(s_1,s_2,u_3) = C^*(\mathbf{s})+(u_3-s_3)$, $\overline{Q}^{\set,C^*}(u_1,s_2,s_3) = C^*(\mathbf{s})+(u_1-s_1)$ and $\overline{Q}^{\set,C^*}(s_1,u_2,s_3) = C^*(\mathbf{s})+(u_2-s_2)$.

\item In addition, $\overline{Q}^{\set,C^*}(\mathbf{s}) = C^*(\mathbf{s})$ because $s\in \set$.
\end{itemize}

Therefore, putting the pieces together and using \eqref{reqApp1.7}, we get
\begin{align*}
V_{\underline{Q}^{\set,C^*}}([\mathbf{s},\mathbf{u}]) & = 
C^*(\mathbf{s}+\pmb{\varepsilon})-3C^*(\mathbf{s}
)-2\sum_{i=1}^3 (u_i-s_i) + 3C^*(\mathbf{s}
)+\sum_{i=1}^3 (u_i-s_i)-C^*(\mathbf{s})\\
&=C^*(\mathbf{s}+\pmb{\varepsilon})-C^*(\mathbf{s}) - \sum_{i=1}^3 (u_i-s_i)<0.
\end{align*}
Thus $\underline{Q}^{\set,C^*}$ is indeed a proper quasi-copula. 
\end{proof}
\end{theorem}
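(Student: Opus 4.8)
The plan is to prove that $\overline{Q}^{\set,C^*}$ fails to be $d$-increasing; since $\overline{Q}^{\set,C^*}$ is always a quasi-copula (Theorem~\ref{PrescriptionOnSubset} applied with the prescription $Q^*=C^*$), this shows at once that it is a \emph{proper} quasi-copula. So the goal is to exhibit a rectangle $H\subset\mathbb{I}^d$ with $V_{\overline{Q}^{\set,C^*}}(H)<0$, mirroring the strategy used for the lower bound in Theorem~\ref{lowerBoundIsQuasiCopulaApp}.

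First I would reduce to the case $d=3$. Writing $I=\{i,j,k\}$, take $H$ to be the box equal to $[0,1]$ in every coordinate outside $I$ and equal to $[s_l,u_l]$ in coordinate $l\in I$, with $u_l\in(s_l,s_l+\varepsilon_l)$ still to be chosen. Because $\overline{Q}^{\set,C^*}$ vanishes whenever one argument is $0$ (boundary condition \ref{cond:QC1}), each difference operator $\Delta^m_{0,1}$ with $m\notin I$ occurring in the expansion of $V_{\overline{Q}^{\set,C^*}}(H)$ simply substitutes $x_m=1$; and the resulting restriction of $\overline{Q}^{\set,C^*}$ to the slice $\{x_m=1:m\notin I\}$ is exactly $\overline{Q}^{\set_I,C^*_I}$, the three-dimensional improved bound for the $I$-margin $C^*_I$ of $C^*$ on the set $\set_I=\prod_{l\in I}\big([0,1]\setminus(s_l,s_l+\varepsilon_l)\big)$. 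Indeed, in the minimum defining $\overline{Q}^{\set,C^*}$ the free coordinates $x_m$, $m\notin I$, are driven to $1$ (the penalty $(1-x_m)^+$ decreases with unit slope while $C^*$ increases with slope at most $1$), so $C^*$ enters only through $C^*_I$. Thus $V_{\overline{Q}^{\set,C^*}}(H)=V_{\overline{Q}^{\set_I,C^*_I}}\!\big(\prod_{l\in I}[s_l,u_l]\big)$, and it suffices to make this three-dimensional volume negative; note that \eqref{reqApp1.5}--\eqref{reqApp1.6} are precisely the $d=3$ hypotheses applied to $C^*_I$, $\mathbf{s}$ and $\pmb{\varepsilon}$.

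For the three-dimensional step I would first use \eqref{reqApp1.5} to pick $\mathbf{u}$ with $u_l\in(s_l,s_l+\varepsilon_l)$ such that $C^*_I(\overline{\mathbf{s}})-C^*_I(\mathbf{s})$ lies strictly between $\sum_{l\in J}(u_l-s_l)$ for every two-element $J\subset I$ and the full sum $\sum_{l\in I}(u_l-s_l)$ -- the same intermediate-value selection as in Theorem~\ref{lowerBoundIsQuasiCopula}. One then evaluates $\overline{Q}^{\set_I,C^*_I}$ at the eight vertices of $\prod_{l\in I}[s_l,u_l]$. Condition \eqref{reqApp1.6}, i.e. $C^*_I(\overline{\mathbf{s}})\le M_3(\mathbf{s})$, rules out the trivial terms $\min\{v_1,v_2,v_3\}$ in the definition of $\overline{Q}$ (since the infimum part is already $\le C^*_I(\overline{\mathbf{s}})\le M_3(\mathbf{s})\le\min_l v_l$), so each vertex value equals the infimum $\min_{\mathbf{x}}\{C^*_I(\mathbf{x})+\sum_l(v_l-x_l)^+\}$ over the slab-complement; monotonicity and the $1$-Lipschitz property of $C^*_I$ pin this down to $C^*_I(\mathbf{s})$ or $C^*_I(\overline{\mathbf{s}})$ shifted by the relevant gaps $u_l-s_l$, and the inequalities defining $\mathbf{u}$ decide which of the two wins at each vertex. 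Summing the eight signed contributions and telescoping then gives $V_{\overline{Q}^{\set_I,C^*_I}}\!\big(\prod_{l\in I}[s_l,u_l]\big)=C^*_I(\overline{\mathbf{s}})-C^*_I(\mathbf{s})-\sum_{l\in I}(u_l-s_l)<0$ by the choice of $\mathbf{u}$.

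The main obstacle I anticipate is the vertex evaluation, precisely the claim that at each vertex the infimum over the slab-complement is attained at one of the two distinguished corners $\mathbf{s}$, $\overline{\mathbf{s}}$ and not at a ``mixed'' corner combining $s_l$ and $s_l+\varepsilon_l$ across different coordinates of $I$. Establishing this requires combining the coordinatewise monotonicity of the objective, the product structure of $\set_I$, and the pairwise inequalities imposed on $\mathbf{u}$; once this reduction is in place the remaining computation is routine linear bookkeeping, identical in form to the $d=3$ case. The argument is unchanged when $C^*$ is only a quasi-copula, since it uses nothing about $C^*$ beyond monotonicity in each variable, the $1$-Lipschitz bound, and $C^*\le M_d$.
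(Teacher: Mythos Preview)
Your proposal is correct and follows essentially the same route as the paper: choose $\mathbf{u}\in(\mathbf{s},\overline{\mathbf{s}})$ by an intermediate-value argument, evaluate $\overline{Q}^{\set,C^*}$ at the eight vertices of $\prod_{l\in I}[s_l,u_l]$ after using \eqref{reqApp1.6} to discard the $M_d$ part, reduce the infimum to the two corners $\mathbf{s}$ and $\overline{\mathbf{s}}$, and sum to obtain $C^*_I(\overline{\mathbf{s}})-C^*_I(\mathbf{s})-\sum_{l\in I}(u_l-s_l)<0$. Your handling of $d>3$ via the $I$-margin (pushing the free coordinates $x_m$, $m\notin I$, to $1$ using the $1$-Lipschitz property) is a clean rephrasing of what the paper does by direct computation with the lift notation, and your auxiliary inequalities $\sum_{l\in J}(u_l-s_l)<C^*_I(\overline{\mathbf{s}})-C^*_I(\mathbf{s})<\sum_{l\in I}(u_l-s_l)$ are in fact the form the vertex analysis actually requires---the paper writes them with $s_l+\varepsilon_l-u_l$, which appears to be a slip carried over from the lower-bound proof, though its final computation agrees with yours.
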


The following Corollary shows that the requirements in Theorem 
\ref{upperBoundIsQuasiCopulaApp} are minimal in the sense that if the 
prescription set is contained in a set of the form \eqref{eq:app-set} then the 
upper bound is a proper-quasi-copula. Its proof is analogous to the proof of 
Corollary \ref{lowerBoundIsQuasiCopulaCor} and therefore omitted.

\begin{corollary}
\label{upperBoundIsQuasiCopulaCor}
Let $C^*$ be a $d$-copula and $S\subset\mathbb{I}^d$ be compact. If there exists 
a compact set $\set'\supset \set$ such that $\set'$ and 
$Q^*:=\overline{Q}^{\set,C^*}$ satisfy the assumptions of Theorem 
\ref{upperBoundIsQuasiCopulaApp}, then $\overline{Q}^{\set,C^*}$ is a proper 
quasi-copula.
\end{corollary}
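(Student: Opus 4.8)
The plan is to follow verbatim the strategy behind Corollary \ref{lowerBoundIsQuasiCopulaCor}, with lower bounds replaced by upper bounds throughout. Writing $Q^* := \overline{Q}^{\set,C^*}$, the hypothesis is that $\set'$ and $Q^*$ satisfy the assumptions of Theorem \ref{upperBoundIsQuasiCopulaApp}, so that theorem already yields that $\overline{Q}^{\set',Q^*}$ is a proper quasi-copula. Hence it suffices to establish the identity
\[
\overline{Q}^{\set,C^*} = \overline{Q}^{\set',Q^*},
\]
from which the claim is immediate.

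First I would record the variational description of the upper bound. By the sharpness part of Theorem \ref{PrescriptionOnSubset} the bound $\overline{Q}^{\set,C^*}$ is itself a $d$-quasi-copula compatible with the prescription, hence, together with \eqref{inequality},
\[
\overline{Q}^{\set,C^*}(\bu) = \max\big\{Q(\bu)\colon Q\in\mathcal{Q}^{\set,C^*}\big\},\qquad \bu\in\mathbb{I}^d,
\]
where $\mathcal{Q}^{\set,C^*}=\{Q\in\mathcal{Q}^d\colon Q(\bx)=C^*(\bx)\text{ for all }\bx\in\set\}$; the analogous statement holds for $\overline{Q}^{\set',Q^*}$ as the pointwise maximum over $\mathcal{Q}^{\set',Q^*}$, and in particular both constraint sets are nonempty.

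Next I would prove the two inequalities. For ``$\le$'': since $\set\subset\set'$ and $\overline{Q}^{\set',Q^*}$ coincides with $Q^*=\overline{Q}^{\set,C^*}$ on $\set'$ (by \eqref{inequality}), and $\overline{Q}^{\set,C^*}$ in turn coincides with $C^*$ on $\set$, we obtain $\overline{Q}^{\set',Q^*}\in\mathcal{Q}^{\set,C^*}$, whence $\overline{Q}^{\set',Q^*}(\bu)\le\overline{Q}^{\set,C^*}(\bu)$ for all $\bu$. For ``$\ge$'': by the very definition $Q^*:=\overline{Q}^{\set,C^*}$, the quasi-copula $\overline{Q}^{\set,C^*}$ trivially satisfies $\overline{Q}^{\set,C^*}(\bx)=Q^*(\bx)$ for every $\bx\in\set'$, so $\overline{Q}^{\set,C^*}\in\mathcal{Q}^{\set',Q^*}$, giving $\overline{Q}^{\set,C^*}(\bu)\le\overline{Q}^{\set',Q^*}(\bu)$. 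Combining yields the identity, and therefore $\overline{Q}^{\set,C^*}$ is a proper quasi-copula.

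I do not anticipate a genuine obstacle; the argument is a pure comparison of prescription sets. The only point requiring care is the direction of the inequalities, which is swapped relative to the lower-bound case — membership in $\mathcal{Q}^{\set,C^*}$ now produces ``$\le$'' and membership in $\mathcal{Q}^{\set',Q^*}$ the reverse — and invoking the sharpness in Theorem \ref{PrescriptionOnSubset} so that $\overline{Q}^{\set,C^*}$ may legitimately be treated as a pointwise supremum over admissible quasi-copulas.
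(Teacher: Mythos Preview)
Your proposal is correct and follows exactly the approach the paper intends: the paper omits the proof of Corollary \ref{upperBoundIsQuasiCopulaCor} as ``analogous to the proof of Corollary \ref{lowerBoundIsQuasiCopulaCor}'', and your argument is precisely that analogue, with the pointwise minimum replaced by the pointwise maximum and the inequalities reversed accordingly. The one cosmetic point is that your labels ``$\le$'' and ``$\ge$'' are slightly ambiguous about which side of the identity they refer to, but the logic of both directions is sound.
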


\bibliographystyle{abbrvnat}
\bibliography{bib}

\end{document}